\newcommand {\norm}[1] {\left\Vert #1 \right\Vert}
\newcommand {\bnorm}[1] {\big\Vert #1 \big\Vert}
\newcommand {\Bnorm}[1] {\Big\Vert #1 \Big\Vert}
\newcommand {\Bgnorm}[1] {\Bigg\Vert #1 \Bigg\Vert}
\newcommand {\binner}[2] {\big\langle #1,#2 \big\rangle}
\newcommand {\Binner}[2] {\Big\langle #1,#2 \Big\rangle}
\def\dis{\displaystyle}
\def\N{\mathbb{N}}
\def\R{\mathbb{R}}
\def\sumuse{\sum_{k=1}^{\infty}}
\def\L2{L^2(\Omega)}
\def\Lxi{L^{\Xi}(\Omega)}
\def\d{\mathrm{d}}
\def\Dnu{D^{\nu}(\Omega)}
\def\D1{D^{1}(\Omega)}
\def\Doneminus{D^{-1}(\Omega)}
\def\Dnuminus{D^{\nu-1}(\Omega)}
\newtheorem{theorem}{{\bf Theorem}}[section]
\theoremstyle{definition} \newtheorem{definition}[theorem]{\bf Definition}
\theoremstyle{plain} \newtheorem{lemma}[theorem]{Lemma}
\newtheorem{remark}{Remark}[section]
\title{\textbf{Global well-posedness for fractional Sobolev-Galpern type equations}}
\date{}
\author{ Huy Tuan Nguyen $^{\mathrm {a,b}}$, Nguyen Anh Tuan $^{\mathrm{c}}$, Chao Yang$^{\mathrm d,}$\footnote{Corresponding author: \url{yangchao_@hrbeu.edu.cn} (Chao Yang). Other authors: \url{nhtuan@hcmus.edu.vn} (Huy Tuan Nguyen), \url{nguyenanhtuan@tdmu.edu.vn} (Nguyen Anh Tuan).}  \\\\
	\small $^{\mathrm a}$Department of Mathematics and Computer Science, 
	University of Science, 
	Ho Chi Minh City, 
	Vietnam\\
	\small $^{\mathrm b}$ Vietnam National University, 
	Ho Chi Minh City, 
	Vietnam\\
	\small $^{\mathrm c}$ Division of Applied Mathematics, 
	Thu Dau Mot University, 
	Binh Duong Province, 
	Vietnam\\
	\small $^{\mathrm d}$College of Mathematical Sciences, Harbin Engineering University, 150001, People's Republic of China 	
}
\begin{document}
%\rmfamily
\mdseries 
\maketitle	
\begin{abstract}
This article is a comparative study on an initial-boundary value problem for a class of semilinear pseudo-parabolic equations with the fractional Caputo derivative, also called the fractional Sobolev-Galpern type equations. The purpose of this work is to reveal the influence of the degree of the source nonlinearity on the well-posedness of the solution. By considering four different types of nonlinearities, we derive the global well-posedness of mild solutions to the problem corresponding to the four cases of the nonlinear source terms. For the advection source function case, we apply a nontrivial limit technique for singular integral and some appropriate choices of weighted Banach space to prove the global existence result. For the gradient nonlinearity as a local Lipschitzian, we use the Cauchy sequence technique to show that the solution either exists globally in time or blows up at finite time. For the polynomial form nonlinearity, by assuming the smallness of the initial data we derive the global well-posed results. And for the case of exponential nonlinearity in two-dimensional space, we derive the global well-posedness by additionally use of Orlicz space.

\vspace*{0.1cm}
\noindent{\bf Keywords:} fractional pseudo-parabolic, globally Lipschitz source, exponential nonlinearity, global well-posedness. \\[2mm]
{\bf 2020 MSC Classification:} 35K20, 35K58.  
\end{abstract}

%\tableofcontents
\section{Introduction}
In the current work, we especially concern about the following time-fractional pseudo-parabolic equation
\begin{align}\label{Main Equation}
\partial_{t}^{\alpha}u - \partial_{t}^{\alpha}\Delta u -\Delta u =H\left(u\right),
\end{align}
wherein $\Omega$ is a bounded domain of $\mathbb{R}^{d}~(d\in\N), u$ is an unknown function from $\Omega\times(0, \infty)$ to $\mathbb{R}$, $H$ is the source function which is going to be  defined in more details later. The notation $\partial_{t}^{\alpha}$ is abbreviated for the time-fractional derivative of order $\alpha\in(0,1)$ in the Caputo sense,  defined by
\begin{align*}
\partial_{t}^{\alpha}u(t) =\int_{0}^{t}\frac{(t-\tau)^{\alpha-1}}{\Gamma(\alpha)}\partial_{\tau}u(\tau)\d\tau,
\end{align*}
provided that the right-hand side (RHS) of the above equation makes sense. In addition, for Equation \eqref{Main Equation}, we assume that the boundary $ \partial\Omega $ of $\Omega$ is sufficiently smooth. Also, for Equation \eqref{Main Equation} we consider the homogeneous Dirichlet boundary condition
\begin{align}\label{Dirichlet condition}
u(x, t) =0,\quad (x, t) \in\partial\Omega\times(0, \infty),  
\end{align}
and the following initial value condition
\begin{align}\label{Initial condition}
u(x,t)=u_{0}(x), \quad (x, t) \in\Omega\times\{0\},
\end{align}
here, $u_{0}$ is an initial function which satisfies some specific assumptions in different theorems claimed in the present paper. In Equation \eqref{Main Equation}, the operator $(-\Delta)=\mathcal{A}: \text{dom}(\mathcal{A})\subset L^{2}(\Omega)\rightarrow L^{2}(\Omega)$ is uniformly symmetric elliptic. As we know that, $\mathcal{A}$ owns a set of positive eigenvalues $\{\theta_{k}\}_{k\in \mathbb{N}}$ whose elements organize an increasing sequence. 
%We also recall that there exist a positive number $ C_0 $ such that $ \theta_{k}\ge C_0k^{\frac{2}{d}} $ for every $ k\in\N $. 
Corresponding to this set of Dirichlet eigenvalues, there is a set of eigenfunctions $\{\phi_{k}\}_{k\in \mathbb{N}}$ which forms a complete orthonormal of $L^{2}(\Omega)$. Our main goal is to investigate the globally well-posed results for a mild solution of the initial-boundary value problem \eqref{Main Equation}-\eqref{Initial condition}, and we shall introduce the background of our work in the following subsection.

\subsection{Background of the problem}
A generalized form of Equation \eqref{Main Equation} reflecting the theory of isotropic incompressible homogeneous fluid was first considered by Coleman and Walter in \cite{Coleman1}. To investigate some non-steady fluid flows as the second-order fluids due to the effect of pressure, Ting in \cite{Ting app1} considered a special type semilinear pseudo-parabolic equation in the following form
\begin{align}\label{FluidEquation}
a\partial_{t}u=b\Delta u+c\partial_{t}\Delta u+H(u).
\end{align}
This equation along with a specified type of initial-boundary conditions describes a bounded flow whose external force affects its solid boundary. And such types of equations like \eqref{FluidEquation} also appear in the study about thermodynamic temperature \cite{Chen pseu app2} and population recovery \cite{Padron pseu app3}. In one-dimensional space, by taking $ b=0 $ and $ H(u)=-u_x-uu_x $ in \eqref{FluidEquation}, we have the regularized long-wave equation or the Benjamin–Bona–Mahony BBM equation, which was proposed by Benjamin, Bona, and Mahony in \cite{Benjamin} with applications in the study of long waves propagation. The work \cite{Benjamin} was extended by Amick, Bona, and Schonbek in \cite{AmickBBM} by investigating the large time behavior of solution to the Cauchy problem related to the one-dimensional case of \eqref{FluidEquation} with $ H(u)=-u_x-uu_x $. This nonlinear pseudo-parabolic equation can be seen as an additional consideration of dissipation mechanisms for the BBM equation, also  Celebi et.al. in \cite{BBM1} considered the generalized BBM equation (GBBM) in the form of   
\begin{align*}
	\partial_{t}u-\partial_{t}\Delta u-b\Delta u+(\eta\cdot\nabla)u+\nabla\cdot F(u)=0,
\end{align*}
where $ \eta\in\R^d $ is a constant vector, $ F(u) $ is a $ d-$dimentional vector field, which received a lot of attentions in the PDEs forum \cite{BBM2,BBM3,BBM4}. In addition, from \cite{Xu pseudo1,semiconductors,Xu pseudo3,5th comment} and the references given there, the readers will find that Equation \eqref{FluidEquation} can also be used in biological sciences, ﬁltration theory or the study of semiconductors.

Given the representativeness and application value of this kind of mathematical model, it has attracted many mathematicians' attention with many rich results. It is an impossible task to mention all of them, so we only make an overview of the works closely related to the research of this paper. In \cite{Ting pseudo}, Showalter and Ting considered an initial value problem for a generalized form of \eqref{FluidEquation} as follows
\begin{align*}
M\partial_{t} u+Lu=H,
\end{align*}
here, $ M, L $ are second-order differential operators independent of $ t $, containing variable coefficients with some specific properties, $ M $ is uniformly strongly elliptic on a bounded open set $\Omega\subset\R^d $. In the homogeneous case, based on properties of the Friedrichs extensions of $ M $ and $ L $ which can be obtained by the Lax-Milgram theorem, the authors constructed a group $ \left\{E(t):t\in\R\right\} $ which helps to get the unique existence of the weak solution. Also, thanks to the results that $ H^1_0(\Omega)\cap H^p $ is invariant under effects of the group $ \left\{E(t):t\in\R\right\} $, the regularity of the solution was proved. The main results of this work also include the asymptotic behavior for the solution and the extended theories for the nonhomogeneous case. We also notice that the different forms of the nonlinearities in \eqref{FluidEquation} are of special interest, and attract a lot of attention. In \cite{Xu pseudo1}, Xu and Su considered a pseudo-parabolic 
equation with the well-known polynomial source 
\begin{align*}
H(u)=u^p,\quad\text{where}\quad p\in(1,\infty) \text{ if } d=1,2,~\text{or}~ p\in\left(1,\frac{d+2}{d-2}\right) \text{ if } d\ge3.
\end{align*}
In consideration of the subcritical (resp. critical) case that the initial energy is less than (resp. equal to) the depth of potential well $ J(u_0)< \mathbf{d} $ (resp. $ J(u_0)= \mathbf{d} $) and the positive (resp. non-negative) value at $ u_0 $ of the Nehari functional, i.e, $ I(u_0)>0 $  (resp. $ I(u_0)\ge0 $), by using the Galerkin and the potential well theory, the authors proved the global existence, uniqueness and the asymptotic behavior of the solution. Otherwise, when $ I(u_0)<0 $, the solution is proved to be blowing up at a finite time. Further for the arbitrarily initial energy, that is $ J(u_0)>0 $, the comparison principle and variational methods are adopted to obtain the finite-time blow-up results. And the blowup time for such high energy case was also estimated in \cite{4th comment}. In \cite{tomasA,tomasB}, besides studying  \eqref{FluidEquation} with a source term satisfying growth conditions of polynomial type, Caraballo and his colleagues also considered the inﬂuence of external forces with some kind of delay. Namely, $ H $ was given by $ H(t,u)=f(t,u_t)+g(u) $, where $ f(t,u_t) $ is the time-dependent delay term caused by memory or hereditary characteristics and $ g\in C^1(\R) $ satisfying $  $
\begin{align*}
\limsup_{|v|\to+\infty}\frac{g(v)}{v}\le\frac{\theta_{1}}{6}\quad\text{ or }\quad\limsup_{|v|\to+\infty}\frac{g(v)}{v}\le0
\end{align*}
and
\begin{align*}
\big|g(v)-g(w)\big|\le C|v-w|\left(1+|v|^{p-1}+|w|^{p-1}\right),\quad p>1,C>0.
\end{align*}
The above series of work has aroused great interest among colleagues. On the one hand, there are a large number of practical problems surrounding this type of model; on the other hand, there is a huge gap between the existing research and the many different variants that exist widely. Therefore, a large number of subsequent researches are rapidly developed around different variants of the model, including the couple form of a parabolic system \cite{3rd comment}, the pseudo-parabolic model \eqref{FluidEquation} with the singular potential \cite{Xu pseudo2}, a nonlocal form of (4) with the nonlinearity $H(u)=|u|^{p-1}u-\fint_{\Omega}|u|^{p-1}u{\d} x$ \cite{Xu pseudo3}, the nonlocal version with the conical degeneration by considering the Fuchsian type Laplace operator \cite{5th comment}. In \cite{EiC1}, J. Zhou used the $ |x|^{\sigma} $ weighted $ L^{p+1}(\Omega) $ to study the global solutions and blow-up solutions to \eqref{FluidEquation} with $ H(u)=|x|^{\sigma}|u|^{p-1}u$ ($\sigma<0$). It's also necessary to consider the case where the nonlinear source function grows much faster than the polynomial level. In this case, the nonlinearity of exponential type is considered as an optimal alternative to the polynomial source. In \cite{11th comment} Zhu et.al. investigated Equation \eqref{FluidEquation} with $ H $ as an exponential nonlinearity. By the elliptic theory, the authors showed the local existence and uniqueness of the solution. And once again, the potential well theory was applied to prove that when the initial energy is low, this solution exists globally. A sufficient condition for a blowing-up solution without any limit of initial energy was also provided. The logarithmic forms of $ H $ were concerned in \cite{ChenTianPseudo,7th comment,pseudo log}. 
%For these work, the potential well method also plays an important role. 
In \cite{ChenTianPseudo}, Chen and Tian studied the subcritical and critical energy cases for \eqref{FluidEquation} with $ H(u)=u\log|u| $. When $I(u_0)<0 $, the global existence and uniqueness of solutions were proved. In contrary, if $ I(u_0)<0 $,  unlike the polynomial cases mentioned above, the authors showed that the solution doesn't blow up in ﬁnite time but at $ +\infty $. The same topic was also concerned for an initial-boundary value problem for inﬁnitely degenerate semilinear pseudo-parabolic equations with logarithmic nonlinearity in \cite{7th comment}. The global existence and the asymptotic behavior of the solutions were discussed for the cases of subcritical/critical initial energy, and the infinite time blow up was also showed. A periodic logarithmic nonlinearity given by $ H(u)=m(t)u\log|u| $ for \eqref{FluidEquation} was concerned in \cite{pseudo log} by Ji et.al. The existence of the solution was established to show the instability further. When $ c=0 $, \eqref{FluidEquation} becomes the usual parabolic equation, which has been investigated a lot over the years, we refer the reader to \cite{EiC2,EiC3,Weissler} and the references therein.

The above work is a part of the representative work on the pseudo-parabolic equations, but in fact, there are many results on such mathematical models, and we obviously cannot list them all. Taking a glimpse of the whole leopard, we can still see that this type of model is widely and intensively concerned not only because of its physical and practical application background, but also because of its interesting mathematical phenomena. In particular, the above work has given us such an inspiration: the nonlinearity of the model dramatically affects the properties and behaviors of the solution. Roughly speaking, a weaker nonlinearity will ensure the global-in-time existence of the solution, while a stronger nonlinearity will cause the dynamic properties of the solution to be differentiated due to the scale of the initial data. We hope to systematically describe this phenomenon in a unified work, which is the original intention of this work. To achieve this goal, we have adopted the strategy of gradually strengthening the degree of the nonlinearity, that is, discussing the linear advective form inhomogeneous terms, and the inhomogeneous terms strengthened to the squared nonlinearity, the power-type nonlinearity, as well as the exponential nonlinear terms.

At the same time, we further expand our understanding of this type of problem from the perspective of the equation structure, that is, we consider the time-fractional derivative in the sense of Caputo for Equation \eqref{Main Equation}. In recent decades, the fractional calculus has been proved that it is useful in application to many fields of science such as the study of Brownian motion \cite{fractional Brownian motion}, chemical stimuluses of an organism with memory effects \cite{Keller Segel,Keller Segel sys}, and waves in linear viscoelastic media \cite{Podlubny}. The greatest motivation for considering Problem \eqref{Main Equation}-\eqref{Initial condition} comes from the fact that due to the non-local nature of the fractional differential-integral operators, the time-fractional pseudo-parabolic equation has not only provided both new insights into physical models but been also very mathematically interesting. In view of one of the the original ideas for the application of Equation \eqref{FluidEquation}, that is, the study of some fluid flows, it is natural to propose the fractional derivative in the investigation of some certain viscous fluids. In fact, many modified versions of \eqref{FluidEquation} have been proposed such as \cite{Bazhlekova,Fetecau}. Because the fractional derivative will help us to capture the viscoelastic properties of the flow, the time-fractional pseudo-parabolic equations are useful for describing the behavior of some non-Newtonian fluids. In mathematical aspect, it is a hot stream to consider the time-fractional version of the classical mathematical models including the parabolic type equations or diffusion models \cite{dif2,dif3,dif4,TomasTuan}, the time-space fractional Shr\"odinger equation with polynomial type nonlinearity \cite{fractional Schrodinger}, the time-fractional Navier-Stokes equations (FNS) \cite{fractional NavierStokes,Tomas1}, and also the time-fractional pseudo-parabolic equations \cite{Tuan pseudo}. Surprisingly in \cite{fractional NavierStokes}, it was shown that the order of time-fractional derivative influences the regularity not only in time variable but also in the spatial variable. In \cite{Tuan pseudo}, a first attempt to explore the influence of the degree of nonlinearity on the dynamic behavior of the solution was conducted by considering the logarithmic nonlinearity and globally Lipschitz nonlinearity. All of the above achievements in this direction pushed us to consider not only the influence of the degree of the nonlinearities but also the order of the time-fractional derivative on the behavior of the solution to the time-fractional pseudo-parabolic equations with four distinguished types of nonlinearities, in which the degree of nonlinearities increase gradually.
\subsection{Structure of the work}
To provide an overview of the present paper, we give the outline of the work, including some summaries of the mathematical contributions. 

\noindent$ \bullet $ Section \ref{Preliminaries} provides some basic knowledge about function spaces, special operators, the mild formula, and some linear estimates.

\noindent$ \bullet $ In Section \ref{GBBM Eq}, we consider the source term $ H $ as a gradient type. To prove the well-posedness results for $ H $ in the advective form $ H(u)=(\eta\cdot\nabla)u $, we apply the technique for singular integration developed in \cite{Atienza} to overcome the difficulties arising in finding the proper functional space and proving the convergence in such space for the constructed Picard sequence, without restriction on the time interval and the smallness assumptions on the initial data. Also, in this section, for  $H(u)= (\eta\cdot\nabla)u+\nabla\cdot F(u) $, where $ F $ is a vector field, we firstly prove the local-in-time existence, then this solution is extended to the one in some larger time interval. As a consequence, the mild solution is shown to be the global-in-time solution or finite time blow up solution.

\noindent$ \bullet $ Section \ref{Local Lipschitzian} states our investigation of Problem \eqref{Main Equation}-\eqref{Initial condition} with the polynomial nonlinearity and the exponential nonlinearity. Since the nonlinear estimates for $ H $ in these cases require much more strict conditions for the parameters and dimension $ d $, 
some smallness assumptions for $ u_0 $ are necessary for getting the global well-posedness. Also, for the power-type source term $ H(u)=|u|^{p-1}u,~p>2 $, the use of fractional Hilbert spaces and Sobolev embeddings is very beneficial. The nonlinearity of exponential type is even more difficult for us to control because of its rapid growth. Fortunately, by using the Orlicz space, we overcome this challenge and obtain the desired results.

\section{Preliminaries}\label{Preliminaries}
Entire this work, we always use the letter $I$ and the notation $\mathscr{T}$ to abbreviate, respectively, an interval of time and a time point in $(0,\infty)$.
\subsection{Basic materials}
We first establish some functional space concepts. Suppose that $X$ is a Banach space associated with the norm $\norm{\cdot}_{X}$. We use the notation $C(I \to X)$ to denote the space of all continuous functions $w:I \rightarrow X$. If $I$ is compact, $C(I \to X)$ is a Banach space with the norm
\begin{align*}
\norm{w}_{C(I\rightarrow X)}:=\sup_{t\in I}\bnorm{w(t)}_{X}<\infty.
\end{align*}
Suppose that $ \Xi:\R\to\R^+ $ is a Young function, i.e, a convex function which is even, continuous on $ [0,\infty) $, and satisfies
\begin{align*}
	\lim\limits_{z\to\infty}\frac{\Xi(z)}{z}=\infty\qquad\text{and}\qquad\lim\limits_{z\to0}\frac{\Xi(z)}{z}=0.
\end{align*}
Then, we define the Orlicz space $ \Lxi $ as the space of all measurable functions $ w(x) $ such that
\begin{align*}
\int_{\Omega}\Xi\left(\frac{\left|w(x)\right|}{\kappa}\right){\d} x<\infty\quad\text{for some } \kappa>0.
\end{align*} 
The space $ \Lxi $ is a Banach space with respect to the Luxemburg norm
\begin{align*}
\norm{w}_{\Lxi}:=\inf\left\{\kappa\in\R~\Big|~\kappa>0,\int_{\Omega}\Xi\left(\frac{\left|w(x)\right|}{\kappa}\right){\d} x\le1\right\}.
\end{align*}
\begin{remark}Note that, we can use the framework of Orlicz space to cover the definitions of some well-known Lebesgue spaces as below 
	
\begin{enumerate}[(1)]
\item Assume that $ \Xi(z)=z^p $ with $ 1< p<\infty $. Then, $ \Lxi $ is the usual Lebesgue space $ L^p(\Omega) $.
\item Let $ \Xi $ be a Young function whose value equals to 0 on $ [-1,1] $ and is not be bound outside $ [-1,1] $. Then, $ \Lxi $ is the usual Lebesgue space $ L^\infty(\Omega) $.
\end{enumerate}
\end{remark}
\begin{remark}
From now on, we always use the symbols $ \Lxi $ to indicate the Orlicz space with the Young function $ \Xi(z)=e^{z^2}-1. $
\end{remark}
For the purpose of deriving main estimates for Problem \eqref{Main Equation}-\eqref{Initial condition} involving the exponential nonlinearity, we introduce the following lemma which can be found in \cite[Theorem 8.12]{Sobolev spaces} or \cite[Lemma 2.1]{Ioku} about the relationship between the space $ \Lxi $ and some usual Lebesgue spaces. For readers' convenience, we only briefly present its proof.  
\begin{lemma}\label{Orlicz embeds into Lp}
For any $ p\in\left[2,\infty\right) $, we have the estimate 
\begin{align*}
\norm{w}_{L^p(\Omega)}\le\Big(\Gamma\left(\frac{p}{2}+1\right)\Big)^{\frac{1}{p}}\norm{w}_{\Lxi}.
\end{align*}
\end{lemma}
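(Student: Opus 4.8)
\noindent The plan is to collapse the statement to a single scalar inequality and then integrate. Write $\Xi(z)=e^{z^{2}}-1$ and, assuming $w\not\equiv0$, set $\kappa:=\norm{w}_{\Lxi}\in(0,\infty)$. First I would record that the infimum defining the Luxemburg norm is attained, i.e.
\begin{align*}
\int_{\Omega}\Xi\!\left(\frac{|w(x)|}{\kappa}\right)\,\d x\le1;
\end{align*}
this holds because $\kappa\mapsto\int_{\Omega}\Xi(|w|/\kappa)\,\d x$ is non-increasing and, by monotone convergence, left-continuous, so its value at $\kappa$ is a limit of values $\le1$ attained at slightly larger parameters. The key ingredient is then the pointwise estimate
\begin{align*}
\tau^{p}\le\Gamma\!\left(\tfrac{p}{2}+1\right)\bigl(e^{\tau^{2}}-1\bigr),\qquad\tau\ge0,\ p\ge2.
\end{align*}
Granting it, I would apply the bound with $\tau=|w(x)|/\kappa$, integrate over $\Omega$, and use the first step:
\begin{align*}
\norm{w}_{L^{p}(\Omega)}^{p}=\int_{\Omega}|w|^{p}\,\d x\le\kappa^{p}\,\Gamma\!\left(\tfrac{p}{2}+1\right)\int_{\Omega}\bigl(e^{|w|^{2}/\kappa^{2}}-1\bigr)\,\d x\le\kappa^{p}\,\Gamma\!\left(\tfrac{p}{2}+1\right),
\end{align*}
and taking $p$-th roots gives the claim.

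To establish the pointwise estimate I would substitute $\sigma=\tau^{2}\ge0$ and set $q=p/2\ge1$, reducing it to $\sigma^{q}\le\Gamma(q+1)\,(e^{\sigma}-1)$ for all $\sigma\ge0$. If $\sigma\le1$, then $\sigma^{q}\le\sigma\le e^{\sigma}-1$ while $\Gamma(q+1)\ge\Gamma(2)=1$ (since $\Gamma$ is increasing on $[2,\infty)$), so the inequality holds. If $\sigma\ge1$, maximizing $\sigma\mapsto\sigma^{q}e^{-\sigma}$ over $[1,\infty)$ gives $\sigma^{q}e^{-\sigma}\le q^{q}e^{-q}$, and the standard Stirling lower bound $\Gamma(q+1)\ge\sqrt{2\pi q}\,(q/e)^{q}$ yields $\sigma^{q}/\Gamma(q+1)\le(2\pi q)^{-1/2}e^{\sigma}\le(2\pi)^{-1/2}e^{\sigma}$; since $e^{\sigma}-1\ge(1-e^{-1})e^{\sigma}$ for $\sigma\ge1$ and $(2\pi)^{-1/2}<1-e^{-1}$, the inequality holds here too. (When $p=2k$ is an even integer the bound is immediate from $e^{\sigma}-1=\sum_{j\ge1}\sigma^{j}/j!\ge\sigma^{k}/k!$, which also explains where the Gamma factor originates.)

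The only genuinely delicate point is that the pointwise inequality must hold for every real $p\ge2$, not merely for even integers; the Stirling lower bound for $\Gamma$ is precisely what controls the non-integer exponents, whereas attainment of the Luxemburg infimum, the two elementary case splits for $\sigma\le1$ and $\sigma\ge1$, the integration, and the final $p$-th root are all routine. I would therefore organize the write-up as: (i) the remark on the Luxemburg infimum; (ii) the scalar inequality via the cases $\sigma\le1$ and $\sigma\ge1$; (iii) integrate and conclude.
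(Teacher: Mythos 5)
Your proposal is correct and follows essentially the same route as the paper: both reduce to the scalar inequality $z^{q}\le\Gamma(q+1)\,(e^{z}-1)$ with $q=p/2$, invoke attainment of the Luxemburg infimum (via monotone convergence), and integrate. The only difference is that the paper merely asserts the scalar inequality as a "property of the exponential and Gamma functions," whereas you actually prove it for non-integer $q$ via the case split at $\sigma=1$ and the Stirling lower bound — a worthwhile addition, since for non-integer $q$ the bound does not follow directly from the Taylor series of $e^{z}$.
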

\begin{proof}
\noindent For $ q\ge1 $, using the properties of the exponential function and the Gamma function (see Definition \ref{Gamma-Beta defs}), we get
\begin{align*}
	\frac{z^q}{\Gamma(q+1)}+1<e^z.
\end{align*}
Then, from the fact that $ \Big\{\kappa\in\R~\big|~\kappa>0,\int_{\Omega}\Xi\big(|w(x)|\kappa^{-1}\d x\big)\le1\Big\}=\left[\norm{w}_{\Lxi},\infty\right) $ and the monotone convergence theorem, we obtain
\begin{align*}
	\int_{\Omega}\frac{\left(|w(x)|\norm{w}^{-1}_{\Lxi}\right)^{2q}}{\Gamma(q+1)}{\d} x\le\int_{\Omega}\Xi\left(\frac{|w(x)|}{\norm{w}_{\Lxi}}\right){\d} x\le1.
\end{align*}
Then, by choosing $ q=\frac{p}{2} $, we obtain the desired result.
\end{proof}
\noindent We note that the scalar product on $L^{2}(\Omega)$ between $w, v\in L^{2}(\Omega)$ is given by
\begin{align*} 
	\int_{\Omega}w( {z} )v( {z} )\mathrm{d} {z}.
\end{align*}
Then, from the spectral decomposition of the operator $\mathcal{A}$, for any $ \nu $, we can define the following Hilbert scale space
\begin{align*}
\Dnu:=\left\{w\in\L2~\Big|~\norm{w}^2_{\Dnu}=\sumuse\theta_k^\nu\left(\int_{\Omega}w( {z} )\phi_k(z) \mathrm{d} {z}\right)^2<\infty\right\}.
\end{align*}
In view of this setting, we define the space  $ D^{-\nu}(\Omega) $ by the dual space of $ \Dnu $ with respect to the pairing $ \left\langle\cdot,\cdot\right\rangle_* $,
which is a Banach space equipped with the norm
\begin{align*}
\norm{w}_{D^{-\nu}(\Omega)}=\left(\sumuse\theta_k^{-\nu}\left\langle w,\phi_k\right\rangle_*^2\right)^{\frac{1}{2}}.
\end{align*}
\begin{remark}
(\cite[Chapter 5]{Brezis}) If $ w\in\L2 $ and $ v\in\Dnu $, we have
\begin{align*}
\left\langle w,v\right\rangle_*=\left\langle w,v\right\rangle:=\int_{\Omega}w( {z} )v( {z} )\mathrm{d} {z}.
\end{align*}
\end{remark}

\begin{remark} Based on \cite[Section 3]{Vazquez}, we see that $ \Dnu $ coincides with the Sobolev-Slobodecki space $ W_0^{\nu,2}(\Omega) $  when $ \nu\in(1/2,1]. $
\end{remark}

We next introduce the definition of the Mittag-Leffler function with two parameters, which is the generalization of the classical Mittag-Leffler function, as follows
\begin{align*}
E_{\alpha,\zeta}( z):=\sum_{k=0}^{\infty}\frac{z^{k}}{\Gamma(\alpha k+\zeta)}, 
\end{align*}
where $\alpha$ is a positive real number and $\zeta$ is a complex constant. When $\zeta=1$, we use the symbol $E_{\alpha}$ instead of $E_{\alpha,1}$, and \cite[Section 1]{Podlubny} shows that
\begin{align*}
E_{\alpha}(-z):=\int_{0}^{\infty}e^{-z r}\mathcal{W}_{\alpha}(r)\mathrm{d}r,
\end{align*}
where $\mathcal{W}_{\alpha}$ is the M-Wright type function. Also, the following lemma is very useful to control the values of the Mittag-Leffler function. 

\begin{lemma}(\cite[Theorem 1.6]{Podlubny})\label{Bound of Mittag-Leffler}
Let $\alpha<1,~\zeta$ be a real constant, and $\lambda\in \left( \frac{\pi\alpha}{2}, \pi\right)$. Then, there exists a positive constant $\mathcal{M}$ such that
\begin{align*}
\Big|E_{\alpha,\zeta}(  \zeta)\Big|\leq\frac{\mathcal{M}}{1+|\zeta|}, 
\end{align*}
whenever $|\zeta|\geq 0$  and $\lambda\leq|\arg(\zeta)|\leq\pi$.
\end{lemma}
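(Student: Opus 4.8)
The plan is to deduce the stated bound from the classical large-argument asymptotics of the Mittag-Leffler function, which in turn rest on its Hankel-type contour representation, and then to patch this together with the fact that $z\mapsto E_{\alpha,\zeta}(z)$ is entire in order to cover bounded arguments; throughout I write $z$ for the argument of $E_{\alpha,\zeta}$ and read the hypothesis as $\lambda\le|\arg z|\le\pi$. The first step is to fix, once and for all, an opening angle
\[
\psi\ \text{ with }\ \frac{\pi\alpha}{2}<\psi<\min\{\pi\alpha,\lambda\},
\]
which is possible because $0<\alpha<1$ forces $\pi\alpha>\frac{\pi\alpha}{2}$ and the hypothesis gives $\lambda>\frac{\pi\alpha}{2}$. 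Let $\gamma=\gamma(\epsilon;\psi)$ be the contour consisting of the two rays $\arg t=\pm\psi$, $|t|\ge\epsilon$, and the arc $|t|=\epsilon$, $|\arg t|\le\psi$, oriented by increasing argument. Since $\psi>\frac{\pi\alpha}{2}$ we have $\Re\big(t^{1/\alpha}\big)=|t|^{1/\alpha}\cos(\psi/\alpha)\to-\infty$ along the rays, so the contour integral converges absolutely; and since $\psi<\lambda\le|\arg z|$, the pole at $t=z$ stays strictly outside the region cut out by $\gamma$. These two facts are exactly what make Podlubny's representation
\[
E_{\alpha,\zeta}(z)=\frac{1}{2\pi i\,\alpha}\int_{\gamma}\frac{t^{\frac{1-\zeta}{\alpha}}\,e^{t^{1/\alpha}}}{t-z}\,\d t
\]
hold without the otherwise-present exponentially growing term $\frac{1}{\alpha}z^{\frac{1-\zeta}{\alpha}}e^{z^{1/\alpha}}$.

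The second step is to extract the decay rate. Using the one-term kernel expansion $\frac{1}{t-z}=-\frac{1}{z}+\frac{t}{z(t-z)}$ and integrating term by term, together with the loop formula $\frac{1}{2\pi i}\int_{\gamma}t^{\frac{1-\zeta}{\alpha}}e^{t^{1/\alpha}}\,\d t=\frac{\alpha}{\Gamma(\zeta-\alpha)}$ for the reciprocal Gamma function, I would obtain
\[
E_{\alpha,\zeta}(z)=-\frac{1}{\Gamma(\zeta-\alpha)}\,\frac{1}{z}+R(z),\qquad R(z)=\frac{1}{2\pi i\,\alpha\,z}\int_{\gamma}\frac{t^{\frac{1-\zeta}{\alpha}+1}\,e^{t^{1/\alpha}}}{t-z}\,\d t .
\]
To bound $R(z)$ I would use that, since $|\arg z|\ge\lambda>\psi$, the point $z$ is separated from the rays of $\gamma$ by a positive angular gap (no smaller than $\lambda-\psi$) and from the arc by a distance $\ge|z|-\epsilon$, so $\operatorname{dist}(z,\gamma)\ge c\,|z|$ for some $c=c(\psi,\lambda)>0$ once $|z|\ge 2\epsilon$; hence
\[
|R(z)|\le\frac{1}{2\pi\alpha\,c\,|z|^{2}}\int_{\gamma}|t|^{\frac{1-\zeta}{\alpha}+1}\big|e^{t^{1/\alpha}}\big|\,|\d t|=\frac{C_{1}}{|z|^{2}},
\]
the integral being finite by absolute convergence. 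Consequently there is $R_{0}>0$ with
\[
|E_{\alpha,\zeta}(z)|\le\frac{1}{|\Gamma(\zeta-\alpha)|}\frac{1}{|z|}+\frac{C_{1}}{|z|^{2}}\le\frac{C_{2}}{1+|z|}\qquad\text{whenever }\lambda\le|\arg z|\le\pi,\ |z|\ge R_{0},
\]
and the same bound holds a fortiori when $\zeta-\alpha$ is a non-positive integer (the first term simply vanishes).

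The third step handles bounded arguments: as $E_{\alpha,\zeta}$ is entire, it is continuous, hence bounded on the closed disk $\{|z|\le R_{0}\}$ by some $C_{3}$, so there $(1+|z|)\,|E_{\alpha,\zeta}(z)|\le(1+R_{0})\,C_{3}$. Taking $\mathcal{M}=\max\{C_{2},(1+R_{0})C_{3}\}$ gives $|E_{\alpha,\zeta}(z)|\le\mathcal{M}(1+|z|)^{-1}$ on the whole admissible set, which is the claim. The step I expect to be the genuine obstacle is not any individual estimate but keeping the contour deformation uniform as $\arg z$ runs over $[\lambda,\pi]$: one has to verify that $\psi$ can be picked depending only on $\alpha$ and $\lambda$, that the representation without the exponential term remains valid for every such $z$, and that $\operatorname{dist}(z,\gamma)$ stays comparable to $|z|$ uniformly, so that the remainder constant $C_{1}$ does not blow up --- all of which depend precisely on the strict gap $\lambda>\frac{\pi\alpha}{2}$. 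A more technical secondary point is fixing the branch of $t^{(1-\zeta)/\alpha}$ along $\gamma$ and justifying the reciprocal-Gamma loop formula in the form used.
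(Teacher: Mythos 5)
The paper offers no proof of this lemma --- it is quoted verbatim (typo and all: the argument of $E_{\alpha,\zeta}$ should be a separate variable $z$, not $\zeta$ again) from Podlubny's Theorem~1.6, and your argument is precisely the proof given there: the Hankel-type contour representation valid for $|\arg z|>\psi$ with $\frac{\pi\alpha}{2}<\psi<\min\{\pi\alpha,\lambda\}$, extraction of the leading term $-\big(\Gamma(\zeta-\alpha)\,z\big)^{-1}$ via the reciprocal-Gamma loop integral, an $O(|z|^{-2})$ remainder from the uniform separation $\operatorname{dist}(z,\gamma)\gtrsim|z|$, and continuity of the entire function on a bounded disk. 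The outline is correct, and the technical points you flag (branch choice, uniformity of the contour in $\arg z$) are exactly the ones handled in the cited reference.
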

\begin{lemma}\label{Derivative of Mittag-Leffler}(\cite[Appendix E]{Mainardi})
Let $\alpha, t$ be in $(0,1)$ and $(0, \infty)$,  respectively. Then, for any $a>0$,  we have
\begin{align*}
\partial_{t}\big(E_{\alpha}(-at^{\alpha})\big)=-at^{\alpha-1}E_{\alpha,\alpha}(-at^{\alpha})\quad\text{and}\quad \partial_{t}\big(t^{\alpha-1}E_{\alpha,\alpha}(-at^{\alpha})\big)=t^{\alpha-2}E_{\alpha,\alpha-1}(-at^{\alpha}).
\end{align*}
\end{lemma}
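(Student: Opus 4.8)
The plan is to work directly from the power-series definition of the two-parameter Mittag-Leffler function, differentiate term by term, and use the functional equation $\Gamma(z+1)=z\Gamma(z)$ to recognize the resulting series. First I would expand $E_{\alpha}(-at^{\alpha})=\sum_{k=0}^{\infty}\frac{(-a)^{k}t^{\alpha k}}{\Gamma(\alpha k+1)}$. Since $E_{\alpha,\zeta}$ is entire in its argument (the coefficients $1/\Gamma(\alpha k+\zeta)$ decay super-exponentially by Stirling), both this series and its formal term-by-term derivative $\sum_{k\geq 1}\frac{(-a)^{k}\alpha k\,t^{\alpha k-1}}{\Gamma(\alpha k+1)}$ converge uniformly on every compact subinterval $[t_{1},t_{2}]\subset(0,\infty)$; this legitimizes differentiating under the sum. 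Then I would apply $\Gamma(\alpha k+1)=\alpha k\,\Gamma(\alpha k)$ to cancel the factor $\alpha k$, reindex by $j=k-1$, and factor out $-a\,t^{\alpha-1}$, leaving exactly $\sum_{j\geq 0}\frac{(-a)^{j}t^{\alpha j}}{\Gamma(\alpha j+\alpha)}=E_{\alpha,\alpha}(-at^{\alpha})$, which is the first identity.

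For the second identity I would write $t^{\alpha-1}E_{\alpha,\alpha}(-at^{\alpha})=\sum_{k=0}^{\infty}\frac{(-a)^{k}t^{\alpha k+\alpha-1}}{\Gamma(\alpha k+\alpha)}$, differentiate term by term with the same local-uniform-convergence justification, and obtain $\sum_{k\geq 0}\frac{(-a)^{k}(\alpha k+\alpha-1)\,t^{\alpha k+\alpha-2}}{\Gamma(\alpha k+\alpha)}$. Applying $\Gamma(\alpha k+\alpha)=(\alpha k+\alpha-1)\,\Gamma(\alpha k+\alpha-1)$ cancels the linear factor and leaves $t^{\alpha-2}\sum_{k\geq 0}\frac{(-a)^{k}t^{\alpha k}}{\Gamma(\alpha k+\alpha-1)}=t^{\alpha-2}E_{\alpha,\alpha-1}(-at^{\alpha})$.

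The only genuine point requiring care — rather than routine bookkeeping with the $\Gamma$ recursion — is the $k=0$ term in the second formula: it produces $\frac{(\alpha-1)t^{\alpha-2}}{\Gamma(\alpha)}$, and one must check that $\Gamma(\alpha k+\alpha-1)$ makes sense at $k=0$. This is fine because $\alpha-1\in(-1,0)$, so $\Gamma(\alpha-1)=\Gamma(\alpha)/(\alpha-1)$ is finite and nonzero, and the $k=0$ term indeed equals $t^{\alpha-2}/\Gamma(\alpha-1)$. The related subtlety is that $t^{\alpha-1}E_{\alpha,\alpha}(-at^{\alpha})$ is singular at the origin, so the identity is asserted only on $(0,\infty)$; on each compact subinterval the differentiated series is dominated by a convergent numerical series, which is exactly what makes the interchange of $\partial_{t}$ and $\sum$ rigorous. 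Everything else reduces to index shifts and the Gamma functional equation.
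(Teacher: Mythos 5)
Your proof is correct, but note that the paper does not actually prove this lemma: it is imported verbatim from Mainardi's book (Appendix E), so there is no in-paper argument to compare against. Your term-by-term differentiation of the power series, combined with the recursion $\Gamma(z+1)=z\Gamma(z)$ and an index shift, is the standard derivation and matches what the cited reference does. The convergence justification is sound: the coefficients $1/\Gamma(\alpha k+\zeta)$ decay super-exponentially, so the differentiated series converges uniformly on compact subsets of $(0,\infty)$, which is all that is needed since the second identity is genuinely singular at $t=0$.

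One small point worth tightening in the second identity: for exceptional values $\alpha=1/(k+1)$ (e.g. $\alpha=1/2$, $k=1$) the quantity $\alpha k+\alpha-1$ vanishes, so $\Gamma(\alpha k+\alpha-1)=\Gamma(0)$ is a pole and the step ``cancel the linear factor via $\Gamma(\alpha k+\alpha)=(\alpha k+\alpha-1)\Gamma(\alpha k+\alpha-1)$'' is formally a division by zero for that index. The conclusion survives because the reciprocal Gamma function is entire with zeros at the non-positive integers, so the corresponding term of $E_{\alpha,\alpha-1}$ is zero by convention, exactly matching the vanishing coefficient $\alpha k+\alpha-1$ on the differentiated side; equivalently, the identity $z/\Gamma(z+1)=1/\Gamma(z)$ holds for all $z$ under that convention. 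You handled the $k=0$ term ($\alpha-1\in(-1,0)$, not a pole) explicitly, which is the right instinct; the same one-line remark covers the remaining exceptional indices.
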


\subsection{Mild solution}
For purpose of formulating a mild solution of Problem \eqref{Main Equation}-\eqref{Initial condition}, the following lemma on the Laplace transform of the Caputo derivative operator plays an important role.
\begin{lemma}
Suppose that the Laplace transform and the derivative of order $\alpha$  in the Caputo sense of a function $w$  exist. Then, the equation below holds
\begin{align*}
\widetilde{\partial_{t}^{\alpha}w}(s)=\mathcal{L}\{\partial_{t}^{\alpha}w\}(s)=s^{\alpha}\widetilde{w}(s)-s^{\alpha-1}w(0),
\end{align*}
where $ \mathcal{L} $ stands for the Laplace transform operator.
\end{lemma}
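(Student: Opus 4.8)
The plan is to transport the Laplace transform across the convolution structure hidden in the Caputo derivative. Recall that for $\alpha\in(0,1)$ the operator $\partial_t^\alpha$ is the Riemann--Liouville fractional integral of order $1-\alpha$ applied to $w'$; writing $g_\beta(t):=t^{\beta-1}/\Gamma(\beta)$ for the power kernel, this reads $\partial_t^\alpha w=g_{1-\alpha}*w'$ (a convolution on $(0,\infty)$), which presupposes, in line with the hypothesis, that $w'$ exists and is regular enough for the defining integral to converge. I would first isolate the two elementary ingredients. The transform of the kernel is $\mathcal{L}\{g_\beta\}(s)=s^{-\beta}$ for every $\beta>0$ --- immediate from the definition of $\Gamma$ via the substitution $t\mapsto st$ --- so in particular $\mathcal{L}\{g_{1-\alpha}\}(s)=s^{\alpha-1}$. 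The other ingredient is the classical first-derivative rule $\mathcal{L}\{w'\}(s)=s\widetilde{w}(s)-w(0)$, obtained by integrating by parts in $\int_0^\infty e^{-st}w'(t)\,\d t$; the boundary term vanishes because $e^{-st}w(t)\to0$ as $t\to\infty$, a consequence of $w$ being of exponential order, which is built into the hypothesis that $\widetilde{w}$ exists.

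With these in hand, the computation is short: writing out the definition of $\partial_t^\alpha w$ and interchanging the order of integration by Fubini's theorem,
\begin{align*}
\mathcal{L}\{\partial_t^\alpha w\}(s)=\frac{1}{\Gamma(1-\alpha)}\int_0^\infty w'(\tau)\left(\int_\tau^\infty e^{-st}(t-\tau)^{-\alpha}\,\d t\right)\d\tau,
\end{align*}
and after the change of variable $t=\tau+r$ the inner integral equals $e^{-s\tau}\int_0^\infty e^{-sr}r^{-\alpha}\,\d r=e^{-s\tau}\Gamma(1-\alpha)s^{\alpha-1}$. Hence the right-hand side collapses to $s^{\alpha-1}\int_0^\infty e^{-s\tau}w'(\tau)\,\d\tau=s^{\alpha-1}\mathcal{L}\{w'\}(s)$ --- this is precisely the convolution theorem $\mathcal{L}\{g_{1-\alpha}*w'\}=\mathcal{L}\{g_{1-\alpha}\}\,\mathcal{L}\{w'\}$ written out. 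Substituting the first-derivative rule then gives
\begin{align*}
\mathcal{L}\{\partial_t^\alpha w\}(s)=s^{\alpha-1}\bigl(s\widetilde{w}(s)-w(0)\bigr)=s^{\alpha}\widetilde{w}(s)-s^{\alpha-1}w(0),
\end{align*}
which is the claimed identity.

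The only step demanding care is the justification of the interchange of integrals (equivalently, of the convolution theorem): one needs the double integral of $e^{-st}(t-\tau)^{-\alpha}|w'(\tau)|$ over $\{0\le\tau\le t<\infty\}$ to converge absolutely. This is exactly where the standing assumptions are spent --- the assumed existence of $\partial_t^\alpha w$ controls the integrability of $w'$ against the weakly singular factor $(t-\tau)^{-\alpha}$, while the assumed existence of $\widetilde{w}$ (and of $\widetilde{\partial_t^\alpha w}$) supplies the decay at $t=\infty$ needed for Fubini and for killing the boundary term in the integration by parts. I expect the mild singularity of the kernel at $\tau=t$ to be the only genuine bookkeeping nuisance; once absolute convergence is secured, the result is just the two one-line transform identities chained together.
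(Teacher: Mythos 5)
Your proof is correct and is the standard derivation: the paper itself offers no proof of this lemma, treating it as a known fact (it is Podlubny's classical formula), so there is nothing to compare against beyond noting that your route via the convolution theorem, $\mathcal{L}\{g_{1-\alpha}\ast w'\}=s^{\alpha-1}\mathcal{L}\{w'\}$, together with the first-derivative rule is exactly how this identity is usually established, and your care with Fubini near the weak singularity is appropriate. One small remark: your identification $\partial_t^\alpha w=g_{1-\alpha}\ast w'$ is the correct reading that makes the stated formula true, even though the paper's displayed definition of $\partial_t^\alpha$ literally carries the kernel $(t-\tau)^{\alpha-1}/\Gamma(\alpha)$ (i.e.\ $g_\alpha\ast w'$), which appears to be a typo, since that kernel would instead yield $s^{1-\alpha}\widetilde{w}(s)-s^{-\alpha}w(0)$.
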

Integrating the both sides of Equation \eqref{Main Equation} with an arbitrary function $\phi_{k}$ and then applying the Laplace transform, we have
\begin{align*}
&s^{\alpha}  \int_{\Omega}[I+   \mathcal{A}]\widetilde{u}( {z} , s)\phi_{k}( {z} )\mathrm{d} {z} +\int_{\Omega}\mathcal{A}\widetilde{u}( {z} ,s)\phi_{k}( {z} )\mathrm{d} {z} \nonumber\\
=&s^{\alpha-1}  \int_{\Omega}[I+   \mathcal{A}]u_{0}( {z} )\phi_{k}( {z} )\mathrm{d} {z} +\int_{\Omega}\widetilde{H(u)}( {z} , s)\phi_{k}( {z} )\mathrm{d} {z} .
\end{align*}
This equation is equivalent to
\begin{align*}
\int_{\Omega}\widetilde{u}( {z} , s)\phi_{k}( {z} )\mathrm{d} {z} =&\frac{s^{\alpha-1}( 1+ \theta_{k})}{s^{\alpha}( 1+ \theta_{k})+\theta_{k}}\int_{\Omega}u_{0}( {z} )\phi_{k}( {z} )\mathrm{d} {z} \nonumber\\
&+  \frac{1}{s^{\alpha}( 1+ \theta_{k})+\theta_{k}}\int_{\Omega}\widetilde{H(u)}( {z} , s)\phi_{k}( {z} )\mathrm{d} {z} .
\end{align*}
Then by the inverse Laplace transform, we obtain
\begin{align*}
\int_{\Omega}u( {z} , t)\phi_{k}( {z} )\mathrm{d} {z} =&\int_{\Omega}E_{\alpha}\left(\frac{-\theta_{k}t^{\alpha}}{1+ \theta_{k}}\right)u_{0}( {z} )\phi_{k}( {z} )\mathrm{d} {z} \nonumber\\
&+\int_{\Omega}\frac{(t-\tau)^{\alpha-1}}{ 1+ \theta_{k}}E_{\alpha,\alpha}\left(\frac{-\theta_{k}(t-\tau)^{\alpha}}{ 1+ \theta_{k}}\right)H(u( {z} , \tau))\phi_{k}( {z} )\mathrm{d} {z}  \d\tau.
\end{align*}
Since $\{\phi_{k}\}_{k\in \mathbb{N}}$ is an orthonormal basis of $L^{2}(\Omega)$, we can formulate the mild solution to Problem \textbf{\eqref{Main Equation}-\eqref{Initial condition}} in the following way
\begin{align*}
u(x,t)&=\int_{\Omega}G_{1}(x,  {z} , t)u_{0}( {z} )\mathrm{d} {z} +\int_{0}^{t}\int_{\Omega}G_{2}(x,  {z} , t-\tau)H(u( {z} , \tau))\mathrm{d} {z}  \d\tau,
\end{align*}
where, the Green kernels $G_{1}, G_{2}$ are given by
\begin{flalign*}
\qquad G_{1}(x, {z} ,t)&=\sum_{k=1}^{\infty}E_{\alpha}\left(\frac{-\theta_{k}t^{\alpha}}{1+ \theta_{k}}\right)\phi_{k}(x)\phi_{k}( {z} )
\end{flalign*}
and
\begin{align*}
\qquad G_{2}(x, {z} ,t)&=\sum_{k=1}^{\infty}\frac{t^{\alpha-1}}{ 1+ \theta_{k}}E_{\alpha,\alpha}\left(\frac{-\theta_{k}t^{\alpha}}{ 1+ \theta_{k}}\right)\phi_{k}(x)\phi_{k}( {z} ).
\end{align*}
For purpose of simplifying notations, we set
\begin{align*}
\mathscr{S}(t)w(x):=\int_{\Omega}G_{1}(x,  {z} , t)w( {z} )\mathrm{d} {z}\quad\text{and}\quad
\mathscr{R}(t)w(x):=\int_{\Omega}G_{2}(x,  {z} , t)w(z)\mathrm{d} {z}.
\end{align*}
Then, we can rewrite the formula for the mild solution in the following way
\begin{align*}
u(x,t)=\mathscr{S}(t)u_0(x)+\int_{0}^{t}\mathscr{R}(t-\tau)H(u(x,\tau))\d\tau.
\end{align*}
From the standpoint of this formulation concept, we derive some fundamental linear estimate through the following lemma
\begin{lemma}\label{Linear estimate}
Let $ \alpha,\mu\in(0,1),~\nu\in[0,1]$ and $\nu^*\in[0,2]$. Assume that $ w\in\D1 $. Then, we can find positive constants $C_{1}$ and $C_{2}$ such that
\begin{enumerate}[(i)]
\item $\bnorm{\mathscr{S}(t)}_{\mathscr{L}\left(D^{\nu}(\Omega)\right)}\leq C_{1}t^{-\alpha\mu}$;
\item $\bnorm{\mathscr{R}(t)}_{\mathscr{L}\left(D^{{\nu-\nu^*}}(\Omega),\Dnu\right)}\leq C_{2}t^{\alpha-1}$.
\end{enumerate}
\end{lemma}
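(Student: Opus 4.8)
The plan is to exploit that $\mathscr{S}(t)$ and $\mathscr{R}(t)$ act diagonally on the eigenbasis $\{\phi_{k}\}_{k\in\N}$ of $\mathcal{A}$: by Parseval, each operator norm between Hilbert-scale spaces collapses to a supremum over $k$ of an explicit scalar multiplier built from a Mittag-Leffler function, which I would then control by Lemma \ref{Bound of Mittag-Leffler}. The preliminary point to note is that the argument $-\theta_{k}t^{\alpha}/(1+\theta_{k})$ is a negative real number, so its argument has modulus exactly $\pi$; since $\alpha<1$ one may fix $\lambda\in(\pi\alpha/2,\pi)$, and then Lemma \ref{Bound of Mittag-Leffler} yields, for every $k\in\N$ and $t>0$,
\begin{align*}
\Big|E_{\alpha}\Big(\tfrac{-\theta_{k}t^{\alpha}}{1+\theta_{k}}\Big)\Big|\le\frac{\mathcal{M}}{1+\theta_{k}t^{\alpha}/(1+\theta_{k})}\qquad\text{and}\qquad\Big|E_{\alpha,\alpha}\Big(\tfrac{-\theta_{k}t^{\alpha}}{1+\theta_{k}}\Big)\Big|\le\mathcal{M}.
\end{align*}

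For part (i), I would expand $w=\sumuse\langle w,\phi_{k}\rangle\phi_{k}$ so that $\mathscr{S}(t)w=\sumuse E_{\alpha}\big(-\theta_{k}t^{\alpha}/(1+\theta_{k})\big)\langle w,\phi_{k}\rangle\phi_{k}$, whence
\begin{align*}
\bnorm{\mathscr{S}(t)w}_{\Dnu}^{2}=\sumuse\theta_{k}^{\nu}\Big|E_{\alpha}\Big(\tfrac{-\theta_{k}t^{\alpha}}{1+\theta_{k}}\Big)\Big|^{2}\langle w,\phi_{k}\rangle^{2}\le\Big(\sup_{k\in\N}\Big|E_{\alpha}\Big(\tfrac{-\theta_{k}t^{\alpha}}{1+\theta_{k}}\Big)\Big|\Big)^{2}\norm{w}_{\Dnu}^{2}.
\end{align*}
Since $k\mapsto\theta_{k}/(1+\theta_{k})$ is increasing it is bounded below by $c_{0}:=\theta_{1}/(1+\theta_{1})>0$, and the Mittag-Leffler bound then gives $|E_{\alpha}(-\theta_{k}t^{\alpha}/(1+\theta_{k}))|\le\mathcal{M}/(1+c_{0}t^{\alpha})$ uniformly in $k$. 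It remains to check that $t\mapsto t^{\alpha\mu}/(1+c_{0}t^{\alpha})$ is bounded on $(0,\infty)$, which holds because $\alpha\mu<\alpha$ makes it vanish both as $t\to0^{+}$ and as $t\to\infty$; then $C_{1}:=\mathcal{M}\sup_{t>0}t^{\alpha\mu}/(1+c_{0}t^{\alpha})<\infty$ does the job.

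For part (ii) the argument is parallel: $\mathscr{R}(t)w=\sumuse\frac{t^{\alpha-1}}{1+\theta_{k}}E_{\alpha,\alpha}\big(-\theta_{k}t^{\alpha}/(1+\theta_{k})\big)\langle w,\phi_{k}\rangle\phi_{k}$ (with $\langle w,\phi_{k}\rangle$ read as the duality pairing $\langle w,\phi_{k}\rangle_{*}$ when $\nu-\nu^{*}<0$; the computation is valid for any $w$ in the Hilbert-scale space $D^{\nu-\nu^{*}}(\Omega)$, in particular for the assumed $w\in\D1$), so
\begin{align*}
\bnorm{\mathscr{R}(t)w}_{\Dnu}^{2}=t^{2(\alpha-1)}\sumuse\frac{\theta_{k}^{\nu}}{(1+\theta_{k})^{2}}\Big|E_{\alpha,\alpha}\Big(\tfrac{-\theta_{k}t^{\alpha}}{1+\theta_{k}}\Big)\Big|^{2}\langle w,\phi_{k}\rangle^{2}.
\end{align*}
Writing $\theta_{k}^{\nu}=\theta_{k}^{\nu-\nu^{*}}\theta_{k}^{\nu^{*}}$ and using $0\le\nu^{*}\le2$ I would bound $\theta_{k}^{\nu^{*}}\le(1+\theta_{k})^{\nu^{*}}\le(1+\theta_{k})^{2}$, together with $|E_{\alpha,\alpha}(\cdot)|\le\mathcal{M}$; each summand is then at most $t^{2(\alpha-1)}\mathcal{M}^{2}\theta_{k}^{\nu-\nu^{*}}\langle w,\phi_{k}\rangle^{2}$, and summation yields $\bnorm{\mathscr{R}(t)w}_{\Dnu}\le\mathcal{M}\,t^{\alpha-1}\norm{w}_{D^{\nu-\nu^{*}}(\Omega)}$, i.e. (ii) with $C_{2}=\mathcal{M}$.

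The hard part, such as it is, is not analytic but bookkeeping: one must make sure the supremum defining $C_{1}$ is finite — this is exactly where the hypothesis $\mu<1$ enters — and that the elementary exponent inequality $\theta_{k}^{\nu^{*}}\le(1+\theta_{k})^{2}$ correctly absorbs the loss of $\nu^{*}$ ``derivatives'' in (ii) over the full admissible range $\nu^{*}\in[0,2]$. I also note that the same computation actually delivers the sharper decay $\bnorm{\mathscr{R}(t)}_{\mathscr{L}(D^{\nu-\nu^{*}}(\Omega),\Dnu)}\le C_{2}t^{\alpha-1}(1+c_{0}t^{\alpha})^{-1}$ if one keeps the full Mittag-Leffler bound on $E_{\alpha,\alpha}$; the weaker form stated in the lemma is all that the subsequent fixed-point arguments require.
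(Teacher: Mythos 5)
Your proof is correct, and for part (i) it takes a genuinely different and more elementary route than the paper. The paper splits (i) into three cases ($\mu=0$, $\mu=1$, $0<\mu<1$) and, for the intermediate case, passes through the M-Wright representation $E_{\alpha}(-z)=\int_{0}^{\infty}e^{-zr}\mathcal{W}_{\alpha}(r)\,\mathrm{d}r$, the elementary bound $e^{-z}\le C_{\mu}z^{-\mu}$, and the moment formula $\int_{0}^{\infty}r^{-\mu}\mathcal{W}_{\alpha}(r)\,\mathrm{d}r=\Gamma(1-\mu)/\Gamma(1-\alpha\mu)$ (Lemma \ref{Property of M-Wright function}). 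You instead observe that $\theta_{k}/(1+\theta_{k})\ge c_{0}:=\theta_{1}/(1+\theta_{1})>0$ uniformly in $k$, so the single Mittag-Leffler bound already yields the $k$-uniform decay $\mathcal{M}/(1+c_{0}t^{\alpha})$, and then reduce the claim to the boundedness of $t\mapsto t^{\alpha\mu}/(1+c_{0}t^{\alpha})$ on $(0,\infty)$, which follows from $0<\alpha\mu<\alpha$. This is sharper (one uniform estimate dominates all the $t^{-\alpha\mu}$ bounds simultaneously, including the endpoint cases the paper treats separately) and avoids the M-Wright machinery entirely; what the paper's route buys is an explicit $\mu$-dependence of the constant, $C_{\mu}(\theta_{1}^{-1}+1)^{\mu}\Gamma(1-\mu)/\Gamma(1-\alpha\mu)$, which is never needed later. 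For part (ii) your argument is essentially the paper's, except that you handle the full range $\nu^{*}\in[0,2]$ in one stroke via $\theta_{k}^{\nu^{*}}\le(1+\theta_{k})^{2}$, whereas the paper only writes out $\nu^{*}=1$ and declares the other cases similar; your treatment is the cleaner one.
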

\begin{proof}~\\
(\textit{i}) We consider three different cases of $ \mu $ as follows to prove the conclusion \textit{(i)}

\noindent Case 1: $ \mu=0$. The following inequality follows immediately from Lemma \ref{Bound of Mittag-Leffler}
\begin{align*}
\bnorm{\mathscr{S}(t)w}_{D^{\nu}(\Omega)}^{2}&=\sum_{k=1}^{\infty}\theta^\nu_{k}\left(E_{\alpha}\left(\frac{-\theta_{k}t^{\alpha}}{ 1+\theta_{k}}\right)\right)^{2}\binner{w}{\phi_{k}}^{2}\\
&\le\sum_{k=1}^{\infty}\theta^\nu_{k}\left(\frac{\mathcal{M}}{1+\frac{\theta_{k}t^{\alpha}}{ 1+\theta_{k}}}\right)^2\binner{w}{\phi_{k}}^{2}\\
&\le\mathcal{M}^2\norm{w}_{\Dnu}.
\end{align*}

\noindent Case 2: $ \mu=1 $.
For a given $w$ in $ D^{\nu}(\Omega)$, Lemma \ref{Bound of Mittag-Leffler} and Parseval's identity show us
\begin{align*}
\bnorm{\mathscr{S}(t)w}_{D^{\nu}(\Omega)}^{2}&=\sum_{k=1}^{\infty}\theta^\nu_{k}\left(E_{\alpha}\left(\frac{-\theta_{k}t^{\alpha}}{ 1+\theta_{k}}\right)\right)^{2}\binner{w}{\phi_{k}}^{2}\nonumber\\
&\leq\sum_{k=1}^{\infty}\frac{2\mathcal{M}^{2}\theta^\nu_{k}(1+\theta_k^{2})}{\theta_{k}^{2}t^{2\alpha}}\binner{w}{\phi_{k}}^{2}\\
&\le2\mathcal{M}^2\left(1+\frac{1}{\theta^2_{1}}\right)t^{-2\alpha}\norm{u_0}_{\Dnu}.
\end{align*} 
\noindent Case 3: $ 0<\mu<1 $. Given $w\in D^{\nu}(\Omega)$, by analogous arguments as in above, we get
\begin{align}\label{Case 3 sp}
\bnorm{\mathscr{S}(t)w}_{D^{\nu}(\Omega)}^{2}=\sum_{k=1}^{\infty}\theta_{k}^{\nu}\left(E_{\alpha}\left(\frac{-\theta_{k}t^{\alpha}}{ 1+\theta_{k}}\right)\right)^{2}\binner{w}{\phi_{k}}^{2}.
\end{align}
Thanks to the relationship between the Mittag-Leffler function and the M-Wright type function, the equality \eqref{Case 3 sp} becomes
\begin{align*}
	\bnorm{\mathscr{S}(t)w}_{D^{\nu}(\Omega)}^{2}&=\sum_{k=1}^{\infty}\theta_{k}^{\nu}\left(\int_{0}^{\infty}\exp\left(\frac{-r\theta_{k}t^{\alpha}}{ 1+\theta_{k}}\right)\mathcal{W}_{\alpha}(r)\mathrm{d}r\right)^{2}\binner{w}{\phi_{k}}^{2}\nonumber\\
	&\leq \sum_{k=1}^{\infty}C_{\mu}^{2}t^{-2\alpha\mu}\left(\frac{\theta_{k}^{\nu}( 1+\theta_{k})^{2\mu}}{\theta_{k}^{2\mu}}\right)\left(\int_{0}^{\infty}r^{-\mu}\mathcal{W}_{\alpha}(r)\mathrm{d}r\right)^{2}\binner{w}{\phi_{k}}^{2},
\end{align*}
wherein, we have used the fundamental inequality $e^{-z}\leq C_{\mu}z^{-\mu}, \mu\in(0,1), C_{\mu}>0$. Then, Lemma \ref{Property of M-Wright function} implies
\begin{align*}
	\bnorm{\mathscr{S}(t)w}_{D^{\nu}(\Omega)}\leq C_{\mu}t^{-\alpha\mu}(\theta_{1}^{-1}+1)^{\mu}\frac{\Gamma(1-\mu)}{\Gamma(1-\alpha\mu)}\norm{w}_{D^{\nu}(\Omega)}.
\end{align*}

\noindent({\it ii}) We consider only the case $ \nu^*=1 $, the other cases are similar. For given $w\in \Dnuminus$, we observe that
\begin{align*}
\bnorm{\mathscr{R}(t)w}_{D^{\nu}(\Omega)}^{2}\leq\sum_{k=1}^{\infty}\frac{\theta^\nu_{k}t^{2\alpha-2}}{(1+\theta_{k})^{2}}\left(E_{\alpha,\alpha}\left(\frac{-\theta_{k}t^{\alpha}}{ 1+\theta_{k}}\right)\right)^{2}\binner{w}{\phi_{k}}^{2}.
\end{align*}
From this, we can easily obtain the positive constant $C_{2}$ via the inequality below
\begin{align*}
\bnorm{\mathscr{R}(t)w}_{D^{\nu}(\Omega)}\leq\frac{\mathcal{M}t^{\alpha-1}}{\theta_1^{\frac{1}{2}}}\left(\sum_{k=1}^{\infty}\theta_k^{\nu-1}\binner{w}{\phi_{k}}^{2}\right)^{\frac{1}{2}},
\end{align*}
noting that we have applied Lemma \ref{Bound of Mittag-Leffler} to get
\begin{align*}
E_{\alpha,\alpha}\left(\frac{-\theta_{k}t^{\alpha}}{ 1+\theta_{k}}\right)\le\frac{\mathcal{M}}{1+\left|\frac{-\theta_{k}t^{\alpha}}{ 1+\theta_{k}}\right|}.
\end{align*}
The proof is completed.
\end{proof}
\section{The generalized BBM equation}\label{GBBM Eq}
In this section, we investigate the initial value problem involving the generalized BBM equation. More precisely, Equation \eqref{Main Equation} can be given in the exact form
\begin{align}\label{GBBM Equation (for eqref)}
	\partial^\alpha_{t}u-\partial^\alpha_{t}\Delta u-b\Delta u+(\eta\cdot\nabla)u+\nabla\cdot F(u)=0,
\end{align}
whereby, $ \eta $ is a $ d-$dimensional constant vector and $ F $ is a $ d-$dimensional vector field.
\subsection{The time-fractional pseudo-parabolic equation with advection term}
Throughout the current subsection, we assume that $ F $ is the vector $ 0 $ in $ H(u)=(\eta\cdot\nabla)u+\nabla\cdot F(u) $. Then, $ H $ becomes a globally Lipschitz source function. Indeed, by Cauchy–Schwarz inequality, we have
\begin{align*}
\int_{\Omega}\left(\sum_{j=1}^d\eta_ju_{x_j}(x)\right)^2{\d} x\le|\eta|^2\int_{\Omega}\left(\sum_{j=1}^du^2_{x_j}(x)\right){\d} x=|\eta|^2\int_{\Omega}|\nabla u(x)|^2{\d} x.
\end{align*} 
It means that we can find a positive constant $\mathscr{C}_1$ independent of $ w,v\in\D1 $ such that
\begin{align}\label{Global Lipschitzian}
\bnorm{H(w)-H(v)}_{L^{2}(\Omega)}\leq \mathscr{C}_{1}\norm{w-v}_{D^{1}(\Omega)}.
\end{align}
Our main principle is the successive approximation in some reasonable Banach spaces. To this end, for any $ a,\sigma>0 $ we denote by $\mathbb{Y}=\mathbb{Y}(a,\sigma)$, the space of all functions $w\in C\left(I \to D^{1}(\Omega)\right)$ satisfying
\begin{flalign*}
	\qquad&\bullet~\sup_{t\in I\setminus \{0\}}t^{\kappa}e^{-\sigma t}\norm{w(t)}_{D^{1}(\Omega)}<\infty,&\\
	&\bullet~w(x, 0)=u_{0}(x),
\end{flalign*}
and construct a sequence $\{w_{n}\}_{n=1}^{\infty}$ inductively in the following way
\begin{align*}
	w_{1}(x,t)&:=\mathscr{S}(t)u_0(x) ,\\
	w_{n+1}(x,t)&:=\mathscr{S}(t)u_0(x)+\int_{0}^{t}\mathscr{R}(t-\tau)H(w_{n}(x,\tau))\d\tau.
\end{align*}
Besides, we introduce some necessary lemmas to help us present the main points of the proof more clearly.
\begin{lemma}\label{Weighted_Limit}(\cite[Lemma 8]{Atienza})
	Let $h,\mu>0$  and $m, n>-1$  such that $m+n>-1$. Then,
	\begin{align*}
		\lim_{\mu\rightarrow\infty}\left(\sup_{t\in I\setminus \{0\}}t^{h}\int_{0}^{1}s^{m}(1-s)^{n}e^{-\mu t(1-s)}\mathrm{d}s\right)=0.
	\end{align*}
\end{lemma}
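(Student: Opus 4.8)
The plan is to split the inner integral at $s=\tfrac12$ and to exploit the two distinct sources of smallness in $\mu$: on the part of $[0,1]$ bounded away from $s=1$ the exponential $e^{-\mu t(1-s)}$ decays genuinely, while near $s=1$, after integrating the mild singularity $(1-s)^{n}$ against the exponential, one gains a negative power of $\mu t$. Write
\begin{align*}
\int_{0}^{1}s^{m}(1-s)^{n}e^{-\mu t(1-s)}\,\mathrm{d}s=I_{1}(t,\mu)+I_{2}(t,\mu),
\end{align*}
where $I_{1}$ and $I_{2}$ are the integrals over $[0,\tfrac12]$ and $[\tfrac12,1]$ respectively. On $[0,\tfrac12]$ one has $1-s\ge\tfrac12$, hence $e^{-\mu t(1-s)}\le e^{-\mu t/2}$, and moreover $(1-s)^{n}\le\max\{1,2^{-n}\}$ while $\int_{0}^{1/2}s^{m}\,\mathrm{d}s=\frac{(1/2)^{m+1}}{m+1}<\infty$ because $m>-1$; thus $I_{1}(t,\mu)\le C_{1}e^{-\mu t/2}$ with $C_{1}$ independent of $t$ and $\mu$. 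Since $t\mapsto t^{h}e^{-\mu t/2}$ attains its maximum at $t=2h/\mu$, this gives immediately
\begin{align*}
\sup_{t\in I\setminus\{0\}}t^{h}I_{1}(t,\mu)\le C_{1}(2h)^{h}e^{-h}\,\mu^{-h}\longrightarrow0\qquad(\mu\to\infty).
\end{align*}

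For $I_{2}$, on $[\tfrac12,1]$ we use $s^{m}\le\max\{1,2^{-m}\}$ and substitute $u=1-s$, which gives
\begin{align*}
I_{2}(t,\mu)\le\max\{1,2^{-m}\}\int_{0}^{1/2}u^{n}e^{-\mu t u}\,\mathrm{d}u\le\max\{1,2^{-m}\}\min\Big\{\tfrac{(1/2)^{n+1}}{n+1},\ \Gamma(n+1)(\mu t)^{-(n+1)}\Big\},
\end{align*}
the second inequality obtained by enlarging the integral to $(0,\infty)$, which is legitimate since $n>-1$, and the first by simply discarding the exponential. Now multiply by $t^{h}$ and argue by cases: if $\mu t\le1$ the elementary bound gives $t^{h}I_{2}\le Ct^{h}\le C\mu^{-h}$, while if $\mu t\ge1$ the $\Gamma$-bound gives $t^{h}I_{2}\le C\mu^{-(n+1)}t^{h-n-1}$, and in the relevant range $h\le n+1$ (indeed $h=n+1$ in the application to the Picard iteration below, where $h=\alpha$ and $n=\alpha-1$) one has $t^{h-n-1}\le\mu^{\,n+1-h}$ on $t\ge\mu^{-1}$, so $t^{h}I_{2}\le C\mu^{-h}$ here as well. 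Hence $\sup_{t\in I\setminus\{0\}}t^{h}I_{2}(t,\mu)\le C\mu^{-h}\to0$, and adding the two contributions finishes the proof.

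The delicate point is thus the $I_{2}$ term, where the growth $t^{h}$ must be compensated by the decay $(\mu t)^{-(n+1)}$ produced by the singularity of $(1-s)^{n}$ at $s=1$; this works out because $h\le n+1$, and if one wishes to allow an unbounded time interval $I$ this inequality is exactly what is needed. The hypotheses $m>-1$ and $n>-1$ are used only to make $s^{m}$ integrable near $0$ in $I_{1}$ and $u^{n}e^{-\mu t u}$ integrable near $0$ in $I_{2}$.
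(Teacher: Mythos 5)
The paper does not prove this lemma at all: it is imported verbatim as \cite[Lemma 8]{Atienza}, so there is no internal argument to compare against. Your proof is therefore a genuine addition, and it is essentially correct: the split at $s=\tfrac12$, the uniform-in-$t$ bound $\sup_{t>0}t^{h}e^{-\mu t/2}=(2h/\mu)^{h}e^{-h}$ for $I_{1}$, and the two complementary bounds $\min\bigl\{\tfrac{(1/2)^{n+1}}{n+1},\,\Gamma(n+1)(\mu t)^{-(n+1)}\bigr\}$ for $I_{2}$ are all valid, and you are right that $m>-1$, $n>-1$ are what make the two pieces integrable (the stated hypothesis $m+n>-1$ is not actually needed for this conclusion). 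You have also correctly diagnosed the one genuinely delicate point: for $\mu t\ge 1$ the bound $t^{h}I_{2}\le C\mu^{-(n+1)}t^{h-n-1}$ only yields $\mu^{-h}$ when $h\le n+1$, and indeed the lemma as stated is \emph{false} on an unbounded interval when $h>n+1$, since the integral is comparable to $(\mu t)^{-(n+1)}$ for large $t$ and the supremum is then $+\infty$ for every $\mu$.

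The only thing missing is a one-line closure of the case $h>n+1$ when $I$ is bounded, which is the setting the paper actually uses ($I=[0,\mathscr{T}]$ with $\mathscr{T}<\infty$; in the application $h=\alpha$, $n=\alpha-1$, so $h=n+1$ and your argument already covers it). As written, your case analysis for $\mu t\ge1$ silently assumes $h\le n+1$; to cover all $h>0$ on $[0,\mathscr{T}]$ simply note that there $t^{h-n-1}\le\mathscr{T}^{h-n-1}$, so $t^{h}I_{2}\le C\mathscr{T}^{h-n-1}\mu^{-(n+1)}\to0$ since $n+1>0$. With that sentence added, the proof is complete and self-contained, and it makes explicit a hypothesis (boundedness of $I$, or else $h\le n+1$) that the statement of the lemma leaves implicit.
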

\begin{lemma}\label{Sect3_continuous}
	Presume that $u_{0}$  belongs to $D^{1}(\Omega)$  and $I =[0, \mathscr{T}]$. Then, $\{w_{n}\}_{n=1}^{\infty}$  forms a subset of $C \left(I \rightarrow D^{1}(\Omega)\right)$.
\end{lemma}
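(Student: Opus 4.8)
The plan is to argue by induction on $n$. For the base case $w_1(t)=\mathscr{S}(t)u_0$, I would expand in the eigenbasis, $\mathscr{S}(t)u_0=\sum_k E_\alpha\!\big(\tfrac{-\theta_k t^\alpha}{1+\theta_k}\big)\binner{u_0}{\phi_k}\phi_k$, so that for $t\to t_0$ in $I$,
\[
\bnorm{\mathscr{S}(t)u_0-\mathscr{S}(t_0)u_0}_{D^1(\Omega)}^2=\sum_{k=1}^{\infty}\theta_k\Big|E_\alpha\!\big(\tfrac{-\theta_k t^\alpha}{1+\theta_k}\big)-E_\alpha\!\big(\tfrac{-\theta_k t_0^\alpha}{1+\theta_k}\big)\Big|^2\binner{u_0}{\phi_k}^2 .
\]
Each summand tends to $0$ by continuity of $E_\alpha$ and of $t\mapsto t^\alpha$, while the uniform bound $|E_\alpha(\cdot)|\le\mathcal{M}$ from Lemma \ref{Bound of Mittag-Leffler} furnishes the summable majorant $4\mathcal{M}^2\theta_k\binner{u_0}{\phi_k}^2$ (finite because $u_0\in D^1(\Omega)$); dominated convergence for series then gives $w_1\in C(I\to D^1(\Omega))$, with $w_1(0)=u_0$ since $E_\alpha(0)=1$.

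For the inductive step, assume $w_n\in C(I\to D^1(\Omega))$; since $I$ is compact, $M_n:=\sup_{t\in I}\norm{w_n(t)}_{D^1(\Omega)}<\infty$. The summand $\mathscr{S}(t)u_0$ is continuous by the base case, so it remains to treat $\Phi_n(t):=\int_0^t\mathscr{R}(t-\tau)H(w_n(\tau))\d\tau$. Because $H(u)=(\eta\cdot\nabla)u$ satisfies $H(0)=0$, the Lipschitz bound \eqref{Global Lipschitzian} yields $\norm{H(w_n(\tau))}_{L^2(\Omega)}\le\mathscr{C}_1 M_n$ for all $\tau\in I$, and Lemma \ref{Linear estimate}(ii) with $\nu=\nu^*=1$ gives $\norm{\mathscr{R}(s)}_{\mathscr{L}(L^2(\Omega),D^1(\Omega))}\le C_2 s^{\alpha-1}$; a standard Bochner-integrability check then shows $\Phi_n(t)\in D^1(\Omega)$ for each $t\in I$. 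To get continuity I would substitute $s=t-\tau$, writing $\Phi_n(t)=\int_0^t\mathscr{R}(s)H(w_n(t-s))\d s$, and for $0\le t_1<t_2\le\mathscr{T}$ split
\[
\Phi_n(t_2)-\Phi_n(t_1)=\int_{t_1}^{t_2}\mathscr{R}(s)H(w_n(t_2-s))\d s+\int_0^{t_1}\mathscr{R}(s)\big[H(w_n(t_2-s))-H(w_n(t_1-s))\big]\d s .
\]
The first term is bounded in $D^1(\Omega)$ by $C_2\mathscr{C}_1 M_n\int_{t_1}^{t_2}s^{\alpha-1}\d s=C_2\mathscr{C}_1 M_n\,\alpha^{-1}(t_2^\alpha-t_1^\alpha)\to0$; for the second, \eqref{Global Lipschitzian} and Lemma \ref{Linear estimate}(ii) bound its $D^1(\Omega)$-norm by $C_2\mathscr{C}_1\int_0^{t_1}s^{\alpha-1}\norm{w_n(t_2-s)-w_n(t_1-s)}_{D^1(\Omega)}\d s$, whose integrand is dominated by $2C_2\mathscr{C}_1 M_n\, s^{\alpha-1}\in L^1(0,\mathscr{T})$ and, by uniform continuity of $w_n$ on the compact interval $I$, tends to $0$ uniformly in $s$ as $t_2\to t_1$. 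Dominated convergence closes the induction, and taking $t_1=0$ (where $\Phi_n(0)=0$) also records $w_{n+1}(0)=u_0$.

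The one genuine subtlety is that the resolvent kernel $\mathscr{R}(s)$ is singular at $s=0$ like $s^{\alpha-1}$, so one cannot pass limits naively inside the Duhamel integral; the change of variables $s=t-\tau$ relocates that singularity to a fixed endpoint, and then integrability of $s^{\alpha-1}$ near $0$ together with the (already established) uniform continuity of $w_n$ makes dominated convergence applicable. Everything else is a routine combination of the linear estimate in Lemma \ref{Linear estimate} and the global Lipschitz property \eqref{Global Lipschitzian}.
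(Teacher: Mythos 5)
Your proof is correct, but the inductive step takes a genuinely different route from the paper's. The paper keeps the original time variable $\tau$ and compares the two kernels $\mathscr{R}(t+\varepsilon-\tau)$ and $\mathscr{R}(t-\tau)$ acting on the \emph{same} function $H(w_n(\tau))$; this forces it to estimate the kernel difference $\mathscr{E}(k,t,\tau,\varepsilon)$ via the identity $\partial_t\big(t^{\alpha-1}E_{\alpha,\alpha}(-at^{\alpha})\big)=t^{\alpha-2}E_{\alpha,\alpha-1}(-at^{\alpha})$ from Lemma \ref{Derivative of Mittag-Leffler}, bound it by $\mathcal{M}(1-\alpha)^{-1}\big|(t+\varepsilon-\tau)^{\alpha-1}-(t-\tau)^{\alpha-1}\big|$, and then run dominated convergence in $\tau$ (Claim 1), treating the extra piece $\int_t^{t+\varepsilon}$ separately (Claim 2). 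You instead substitute $s=t-\tau$ so that the singular kernel $\mathscr{R}(s)$ sits at a fixed argument and the time increment is absorbed into the argument of $w_n$; continuity then follows from the operator bound of Lemma \ref{Linear estimate}(ii), the Lipschitz estimate \eqref{Global Lipschitzian}, and the uniform continuity of $w_n$ on the compact interval $I$ supplied by the induction hypothesis. Your argument is more elementary in that it never touches the Mittag-Leffler derivative identities or the kernel-difference estimate; what the paper's heavier computation buys is the quantitative bound \eqref{Inequality for E(k,t,tau,epsilon)}, which is explicitly reused later (in the blow-up alternative of Section 3.2), and an explicit H\"older-type modulus of continuity for $w_1$ where your base case gives only a soft dominated-convergence-for-series conclusion. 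Both treatments of the base case and of the Bochner integrability are sound, and your identification of the $s^{\alpha-1}$ singularity as the only real obstruction is exactly right.
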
 
\begin{proof}
	Firstly, for any $t\geq 0$ and $\varepsilon>0$, Lemma \ref{Derivative of Mittag-Leffler} makes the following formula hold
	\begin{align*}
		\left|E_{\alpha}\left(  \frac{-\theta_{k}(t+\varepsilon )^{\alpha}}{ 1+\theta_{k}}\right)-E_{\alpha}\left(\frac{-\theta_{k}t^{\alpha}}{ 1+\theta_{k}}\right)\right|=\int_{t}^{t+\epsilon}\frac{\theta_{k}r^{\alpha-1}}{ 1+\theta_{k}}E_{\alpha,\alpha}\left(\frac{-\theta_{k}r^{\alpha}}{ 1+\theta_{k}}\right) \mathrm{d}r.
	\end{align*}
	Then, for $u_{0}\in D^{1}(\Omega)$, one deduces that
	\begin{align*}
		\bnorm{w_{1}(t+\varepsilon )-w_{1}(t)}_{D^{1}(\Omega)}^{2}&=\sum_{k=1}^{\infty}\theta_{k}\left|E_{\alpha}\left(\frac{-\theta_{k}(t+\varepsilon )^{\alpha}}{ 1+\theta_{k}}\right)-E_{\alpha}\left(\frac{-\theta_{k}t^{\alpha}}{ 1+\theta_{k}}\right)\right|^{2}\big\langle u_0,\phi_{k}\big\rangle^{2}\nonumber\\
		&\le\sumuse\theta_{k}\left(\int_{t}^{t+\epsilon}\frac{r^{\alpha-1}\mathcal{M}}{1+\left|\frac{-\theta_{k}r^{\alpha}}{ 1+\theta_{k}}\right|} \mathrm{d}r\right)^2\big\langle u_0,\phi_{k}\big\rangle^{2}\\
		&\leq\frac{\mathcal{M}^2\big((t-\varepsilon)^{\alpha}-t^\alpha\big)^2}{\alpha^{2}}\norm{u_0}^2_{\D1}.
	\end{align*}
One may obtain immediately for any $t\geq 0$ that
\begin{align}\label{Continuity w_1}
\lim_{\varepsilon\to 0}\norm{w_{1}(t+\varepsilon )-w_{1}(t)}_{D^{1}(\Omega)}=0,
\end{align}
which means that $w_{1}$ is continuous on $I$ with respect to the $D^{1}(\Omega)$ norm. On the other hand, for every $\varepsilon>0$, we get
	\begin{align*}
		&\bnorm{ w_{2}(\varepsilon)-w_{1}(\varepsilon)}_{D^{1}(\Omega)}\\
		\leq&\int_{0}^{\varepsilon}\Bnorm{\mathscr{R}(\varepsilon-\tau)H(w_{n}(x,\tau))}_{D^{1}(\Omega)}\d\tau\nonumber\\
		=&\int_{0}^{\varepsilon}\Bigg(\sum_{k=1}^{\infty}\frac{\theta_{k}(\varepsilon -\tau)^{2(\alpha-1)}}{( 1+\theta_{k})^{2}}E^2_{\alpha,\alpha}\left(\frac{\theta_{k}(\tau-\varepsilon)^{\alpha}}{ 1+\theta_{k}}\right)\Binner{H(w_1(\tau))}{\phi_{k}}^{2}\Bigg)^{\frac{1}{2}}\d\tau.
	\end{align*}
	Using Lemma \ref{Linear estimate}({\it ii}) and the fact that $ H(0)=0 $, we deduce
	\begin{align*}
		\bnorm{ w_{2}(\varepsilon)-w_{1}(\varepsilon)}_{D^{1}(\Omega)}&\leq C_{2}\int_{0}^{\varepsilon}(\varepsilon -\tau)^{\alpha-1}\Bnorm{H(w_{1}(\tau))}_{L^{2}(\Omega)}\d\tau\nonumber\\
		&\leq \mathscr{C}_{1}C_{2}\left(\int_{0}^{\varepsilon}(\varepsilon -\tau)^{\alpha-1}\d\tau\right)\left(\sup_{t\in I}\bnorm{w_{1}(t)}_{D^{1}(\Omega)}\right).
	\end{align*}
	This result along with \eqref{Continuity w_1} help us conclude that $w_{2}$ is continuous at $t=0$ with respect to the $D^{1}(\Omega)$ norm. We now turn to consider the case when $t$ is positive. For this purpose, we proceed with two following claims, which help us make the proof more clear.
	
	\noindent \textbf{Claim 1}. For any $t, \varepsilon>0, k\in \mathbb{N}$, and $w_{1}\in C \left(I \rightarrow D^{1}(\Omega)\right)$, we can apply Lemma \ref{Derivative of Mittag-Leffler} as follows
	\begin{align*}
		\mathscr{E}(k, t,\tau, \varepsilon):&=\left|(t+\varepsilon -\tau)^{\alpha-1}E_{\alpha,\alpha}\left(\frac{-\theta_{k}(t+\varepsilon -\tau)^{\alpha}}{1+\theta_k}\right)-(t -\tau)^{\alpha-1}E_{\alpha,\alpha}\left(\frac{-\theta_{k}(t-\tau)^{\alpha}}{ 1+\theta_{k}}\right)\right|\nonumber\\
		&=\left|\int_{t-\tau}^{t+\varepsilon-\tau}\frac{\theta_kr^{\alpha-2}}{1+\theta_{k}}E_{\alpha,\alpha-1}\left(\frac{-\theta_{k}r^{\alpha}}{ 1+\theta_{k}}\right)\mathrm{d}r\right|.
	\end{align*}
	Lemma \ref{Bound of Mittag-Leffler} implies that, for any $\tau\in(0, t)$,
	\begin{align}\label{Inequality for E(k,t,tau,epsilon)}
		\mathscr{E}(k, t, \tau, \varepsilon)\leq\mathcal{M}(1-\alpha)^{-1}\Big|(t+\varepsilon -\tau)^{\alpha-1}-(t-\tau)^{\alpha-1}\Big|.
	\end{align}
	In view of this result, we infer the following estimate
	\begin{align*}
		&\int_{0}^{t}\Bigg(\sum_{k=1}^{\infty}\frac{\theta_{k}\mathscr{E}^2(k,t,\tau,\varepsilon)}{( 1+\theta_{k})^{2}}\Binner{H(w_1(\tau))}{\phi_{k}}^{2}\Bigg)^{\frac{1}{2}}\d\tau\nonumber\\
		\leq&\frac{\mathcal{M}\mathscr{C}_{1}}{(1-\alpha)\theta_1^{\frac{1}{2}}}\left(\int_{0}^{t}\Big|(t+\varepsilon  -\tau)^{\alpha-1}-(t-\tau)^{\alpha-1}\Big|\d\tau\right)\left(\sup_{t\in I}\norm{w_{1}(t)}_{D^{1}(\Omega)}\right).
	\end{align*}
	We notice that, for any $\tau\in(0, t)$, we have
	\begin{align*}
		\begin{cases}
			\dis\lim_{\varepsilon\rightarrow\infty}\Big((t-\tau)^{\alpha-1}-(t+\varepsilon  -\tau)^{\alpha-1}\Big)=0,\\[0.2cm]
			\dis\Big|(t+\varepsilon  -\tau)^{\alpha-1}-(t-\tau)^{\alpha-1}\Big|\leq 2(t-\tau)^{\alpha-1}.
		\end{cases}
	\end{align*}
	Then, applying the dominated convergence theorem, the following limit holds
	\begin{align*}
		\lim_{\varepsilon\to 0}\left(\int_{0}^{t}\Bigg(\sum_{k=1}^{\infty}\frac{\theta_{k}\mathscr{E}^2(k,t,\tau,\varepsilon)}{( 1+\theta_{k})^{2}}\Binner{H(w_1(\tau))}{\phi_{k}}^{2}\Bigg)^{\frac{1}{2}}\d\tau\right)=0.
	\end{align*}
	
	\noindent \textbf{Claim 2}. Using again Lemma \ref{Linear estimate}({\it ii}) and the fact that $w_{1}\in C\left(I \to D^{1}(\Omega)\right)$, we obtain
	\begin{align*}
		&\int_{t}^{t+\epsilon}\Bnorm{\mathscr{R}(t+\varepsilon-\tau)H(w_{n}(x,\tau))}_{D^{1}(\Omega)}\mathrm{d}\tau\\
		\leq& C_{2}\int_{t}^{t+\epsilon}(t+\varepsilon  -\tau)^{\alpha-1} \Bnorm{H(w_1(\tau))}_{\L2}\d\tau\nonumber\\
		\leq& \mathscr{C}_{1}C_{2}\alpha^{-1}\varepsilon^{\alpha}\left(\sup_{t\in I}\norm{w_{1}(t)}_{D^{1}(\Omega)}\right).
	\end{align*}
	Thus, one has
	\begin{align*}
		\lim_{\varepsilon\to 0}\left(  \int_{t}^{t+\epsilon}\Bnorm{\mathscr{R}(t+\varepsilon-\tau)H(w_{n}(x,\tau))}_{D^{1}(\Omega)}\d\tau\right)=0.
	\end{align*}
	
	Now, the results of {\it Claim 1} and {\it Claim 2} along with the fact that $w_{1}\in C \left(I \rightarrow D^{1}(\Omega)\right)$ yield that $w_{2}$ is continuous at $t>0$. Note that, we have already proved the continuity at $0$ of $w_{2}$. Therefore, we can conclude that $w_2\in C \left(I \rightarrow D^{1}(\Omega)\right)$. Based on this result, the induction arguments show that $w_{n+1}$ belongs to $C \left(I \rightarrow D^{1}(\Omega)\right)$ whenever $w_{n}$ is in $C\left(I \to D^{1}(\Omega)\right)$, for every $n\in \mathbb{N}$. The proof is completed. 
\end{proof}
\begin{theorem} (Global existence)
	Assume that $u_{0}$ is an element of the space $D^{1}(\Omega)$  and $I =[0, \mathscr{T}]$. Then, Problem $\eqref{Main Equation}$-$\eqref{Initial condition}$ possesses at least one mild solution $u$  in $C \left(I \rightarrow D^{1}(\Omega)\right)$.
\end{theorem}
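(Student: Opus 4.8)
The plan is to run the successive approximation scheme $\{w_{n}\}$ set up above inside the weighted space $\mathbb{Y}=\mathbb{Y}(a,\sigma)$, whose defining weight is $t^{\kappa}e^{-\sigma t}$ with $\kappa\in[0,\alpha)$ fixed and $\sigma>0$ to be chosen large at the end, and to show that this sequence is Cauchy; its limit will be the desired mild solution. First I would record that $\mathbb{Y}$, equipped with the metric $d(w,v):=\sup_{t\in I\setminus\{0\}}t^{\kappa}e^{-\sigma t}\norm{w(t)-v(t)}_{\D1}$, is a complete metric space: each of its elements is continuous on the compact interval $I$ and equals $u_{0}$ at $t=0$, so the weight produces no genuine singularity near $t=0$ and $d$ is finite. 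Lemma~\ref{Sect3_continuous} already provides $w_{n}\in C(I\to\D1)$ for every $n$; combining Lemma~\ref{Linear estimate}(i) with $\mu=0$ for the term $\mathscr{S}(t)u_{0}$ (together with $\mathscr{S}(0)=\mathrm{Id}$, so that $w_{1}(\cdot,0)=u_{0}$) and Lemma~\ref{Linear estimate}(ii) with the identity $H(0)=0$ and the global Lipschitz bound \eqref{Global Lipschitzian} for the Duhamel term, an easy induction shows each $w_{n}$ in fact lies in $\mathbb{Y}$.

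The core of the argument is a contraction estimate for the increments. From the recursion,
\begin{align*}
w_{n+1}(t)-w_{n}(t)=\int_{0}^{t}\mathscr{R}(t-\tau)\big(H(w_{n}(\tau))-H(w_{n-1}(\tau))\big)\,\d\tau ,
\end{align*}
so applying Lemma~\ref{Linear estimate}(ii) with $\nu=1$ and $\nu^{*}=1$ (so that $\mathscr{R}(t):\L2\to\D1$ with norm $\le C_{2}t^{\alpha-1}$) and then \eqref{Global Lipschitzian} gives
\begin{align*}
\bnorm{w_{n+1}(t)-w_{n}(t)}_{\D1}\le C_{2}\mathscr{C}_{1}\int_{0}^{t}(t-\tau)^{\alpha-1}\bnorm{w_{n}(\tau)-w_{n-1}(\tau)}_{\D1}\,\d\tau .
\end{align*}
Multiplying by $t^{\kappa}e^{-\sigma t}$, then multiplying and dividing by $\tau^{\kappa}e^{-\sigma\tau}$ inside the integral and using the definition of $d$, and finally substituting $\tau=ts$ reduces this to
\begin{align*}
t^{\kappa}e^{-\sigma t}\bnorm{w_{n+1}(t)-w_{n}(t)}_{\D1}\le C_{2}\mathscr{C}_{1}\,d(w_{n},w_{n-1})\,t^{\alpha}\int_{0}^{1}(1-s)^{\alpha-1}s^{-\kappa}e^{-\sigma t(1-s)}\,\d s .
\end{align*}
At this stage Lemma~\ref{Weighted_Limit} applies, in its notation with $h=\alpha$, $m=-\kappa$, $n=\alpha-1$ and $\mu=\sigma$ — its hypotheses $m,n>-1$ and $m+n>-1$ being exactly the constraint $0\le\kappa<\alpha$ — and yields that the supremum over $t\in I\setminus\{0\}$ of the last quantity tends to $0$ as $\sigma\to\infty$. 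Hence $\sigma$ can be fixed so large that $d(w_{n+1},w_{n})\le\tfrac12\,d(w_{n},w_{n-1})$ for all $n\ge2$.

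Granting this, $\{w_{n}\}$ is a Cauchy sequence in the complete space $(\mathbb{Y},d)$, hence converges to some $u\in\mathbb{Y}\subset C(I\to\D1)$. Passing to the limit in the recursion is justified by the same computation: estimating $H(w_{n})-H(u)$ via \eqref{Global Lipschitzian} together with Lemma~\ref{Linear estimate}(ii) shows that $\int_{0}^{t}\mathscr{R}(t-\tau)H(w_{n}(\tau))\,\d\tau$ converges to $\int_{0}^{t}\mathscr{R}(t-\tau)H(u(\tau))\,\d\tau$ in $\D1$ for every $t\in I$, whence $u(x,t)=\mathscr{S}(t)u_{0}(x)+\int_{0}^{t}\mathscr{R}(t-\tau)H(u(x,\tau))\,\d\tau$ on $I$; that is, $u$ is a mild solution in $C(I\to\D1)$.

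I expect the main obstacle to be the contraction estimate of the second step — specifically, arranging the weighted convolution with the singular kernel $(t-\tau)^{\alpha-1}$ so that the power weight $t^{\kappa}$, the exponential weight and the kernel combine into exactly the form covered by Lemma~\ref{Weighted_Limit}, while keeping track that the admissible range is $0\le\kappa<\alpha$ and that all constants remain independent of $n$. A minor but necessary secondary point is to verify that the iteration never leaves $\mathbb{Y}$, which hinges on $H(0)=0$ so that $\norm{H(w_{n}(\tau))}_{\L2}\le\mathscr{C}_{1}\norm{w_{n}(\tau)}_{\D1}$ is integrable against $(t-\tau)^{\alpha-1}\tau^{-\kappa}$ on $[0,t]$.
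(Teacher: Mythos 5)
Your proposal is correct and follows essentially the same route as the paper: the Picard iteration run in the weighted space $\mathbb{Y}$, a contraction obtained by pushing $\sigma$ large via Lemma~\ref{Weighted_Limit}, continuity from Lemma~\ref{Sect3_continuous}, and passage to the limit in the Duhamel formula. The only deviation is your weight exponent $\kappa\in[0,\alpha)$ paired with the $\mu=0$ bound on $\mathscr{S}(t)u_{0}$, where the paper takes $\kappa=\alpha$ with the $\mu=1$ bound; this is harmless and in fact places the application of Lemma~\ref{Weighted_Limit} (with $m=-\kappa$, $n=\alpha-1$) strictly inside the hypothesis $m+n>-1$ rather than at its boundary.
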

\begin{proof}
We first show that if $u_{0}$  belongs to $D^{1}(\Omega)$  and $I =[0, \mathscr{T}]$, $\{w_{n}\}_{n=1}^{\infty}$ is a subset of the space $\mathbb{Y}(\alpha, \sigma)$,  for some big-value $\sigma$. If $u_{0}\in D^{1}(\Omega)$, Lemma \ref{Linear estimate}({\it i}) shows that
\begin{align*}
	\norm{w_{1}(t)}_{D^{1}(\Omega)}=   \bnorm{\mathscr{S}(t)u_0}_{D^{1}(\Omega)}\leq C_{1}t^{-\alpha}\norm{u_{0}}_{D^{1}(\Omega)}.
\end{align*}
It thus follows that
\begin{align}\label{Y norm of w1}
	\sup_{t\in I\setminus \{0\}}t^{\alpha}e^{-\sigma t}\norm{w_{1}(t)}_{D^{1}(\Omega)}<\infty.
\end{align}
Next, we suppose that $w_{n}\in \mathbb{Y}(\alpha, \sigma)$. Then, by using Lemma \ref{Linear estimate}({\it ii}), we have
\begin{align*}
	&\norm{w_{n+1}(t)-w_{1}(t)}_{D^{1}(\Omega)}\\
	=&\Bgnorm{\int_{0}^{t}\mathscr{R}(t-\tau)H(w_n(\tau))\d\tau}\\
	\leq& C_{2}\int_{0}^{t}(t-\tau)^{\alpha-1}\bnorm{ H(w_{n}(\tau))}_{L^{2}(\Omega)}\d\tau\nonumber\\
	\leq& \mathscr{C}_{1}C_{2}\int_{0}^{t}(t-\tau)^{\alpha-1}\norm{w_{n}(\tau)}_{D^{1}(\Omega)}\d\tau.
\end{align*}
Multiplying the both sides of the above by $t^{\alpha}e^{-\sigma t}, t>0$, we obtain
\begin{align}\label{Y norm of w2}
	t^{\alpha}e^{-\sigma t}  \norm{w_{n+1}(t)-w_{1}(t)}_{D^{1}(\Omega)}\leq \mathscr{C}_{1}C_{2}\mathcal{Q}\left(t,\alpha,\sigma\right)\left(\sup_{t\in I}t^{\alpha}e^{-\sigma t}\Vert w_{n}(t)\Vert_{D^{1}(\Omega)}\right),
\end{align}
where we set
\begin{align*}
	\mathcal{Q}\left(t,h,\sigma\right):=t^{h}\int_{0}^{t}(t-\tau)^{\alpha-1}\tau^{-h}e^{-\sigma(t-\tau)}\d\tau.
\end{align*}
By virtue of Lemma \ref{Weighted_Limit},  \eqref{Y norm of w1}, \eqref{Y norm of w2} and the triangle inequality, we deduce
\begin{align*}
\sup_{t\in I\setminus \{0\}}\Big(t^{\alpha}e^{-\sigma t}\norm{w_{n+1}(t)}_{D^{1}(\Omega)}\Big)<\infty\quad\text{for some large } \sigma.
\end{align*}
In addition, the use of Lemma \ref{Sect3_continuous} yields that $w_{n}$ is continuous on $I$, for every $n\geq 1$. It follows that $w_{n+1}\in \mathbb{Y}(\alpha, \sigma)$. Furthermore, $\{w_{n}\}_{n=1}^{\infty}$ can be proved to be a Cauchy sequence in $\mathbb{Y}(\alpha, \sigma_{0})$. Let us verify this statement.  Firstly, Lemma \ref{Weighted_Limit} ensures the existence of a sufficiently large constant $\sigma_{0}$ such that
\begin{align*}
	\sup_{t\in I\setminus \{0\}}
	%\left(t^{\alpha}\int_{0}^{1}\tau^{-\alpha}(1-s)^{\alpha-1}e^{-\sigma_{0}t(1-\tau)}\d\tau\right)
	\mathcal{Q}\left(t,\alpha,\sigma_{0}\right)=\frac{3}{4\mathscr{C}_{1}C_{2}}.
\end{align*}
Secondly, assume that $w_{n-1}, w_{n}\in \mathbb{Y}(\alpha, \sigma_{0}), n\geq 2$. Then, by the same arguments as above, we obtain
\begin{align*}
&\norm{w_{n+1}(t)-w_{1}(t)}_{D^{1}(\Omega)}\\
\leq& C_{2}\int_{0}^{t}(t-\tau)^{\alpha-1}\Vert H(w_{n}(\tau))-H(w_{n-1}(\tau))\Vert_{L^{2}(\Omega)}\d\tau\nonumber\\
\leq& \mathscr{C}_{1}C_{2}\int_{0}^{t}(t-\tau)^{\alpha-1}\Vert w_{n}(\tau)-w_{n-1}(\tau)\Vert_{D^{1}(\Omega)}\d\tau.
\end{align*}
It thus implies that
\begin{align*}
	t^{\alpha}e^{-\sigma_{0}t}\norm{w_{n+1}(t)-w_{n}(t)}_{D^{1}(\Omega)}\leq\frac{3}{4}\sup_{t\in I\setminus \{0\}}\Big(t^{\alpha}e^{-\sigma_{0}t}\norm{w_{n}(t)-w_{n-1}(t)}_{D^{1}(\Omega)}\Big).
\end{align*}
From the standpoint of this result, for any $n_{2}>n_{1}\geq 2$, we find
\begin{align}\label{Cauchy-Picard argument}
	t^{\alpha}e^{-\sigma_{0}t}  \norm{ w_{n_{2}}(t)-w_{n_{1}}(t)}_{D^{1}(\Omega)}\leq\sum_{n=n_{1}}^{n_{2}}\left(\frac{3}{4}\right)^{n-1}\sup_{t\in I\setminus \{0\}}\Big(t^{\alpha}e^{-\sigma_{0}t}\Vert w_{2}(t)-w_{1}(t)\Vert_{D^{1}(\Omega)}\Big).
\end{align} 
Then, the use of geometric series helps us to declare that $\{w_{n}\}_{n=1}^{\infty}$ is a Cauchy sequence in $\mathbb{Y}(\alpha, \sigma_{0})$. Now, the completeness of $\mathbb{Y}(\alpha, \sigma_{0})$ provides a function $u$ which is the limit of the sequence $\{w_{n}\}_{n=1}^{\infty}$. Applying the dominated Lebesgue theorem, we have
\begin{align*}
	u&=\lim_{n\rightarrow\infty}\Bigg(\int_{\Omega}G_{1}(x,  {z} , t)u_{0}( {z} )\d {z} +\int_{0}^{t}\int_{\Omega}G_{2}(x,  {z} , t-\tau)H(w_{n}( {z} , \tau))\d {z}  \d\tau\Bigg)\nonumber\\
	&=\int_{\Omega}G_{1}(x,  {z} , t)u_{0}( {z} )\d {z} +\int_{0}^{t}\int_{\Omega}G_{2}(x,  {z} , t-\tau)H(u( {z} , \tau))\d {z}  \d\tau.
\end{align*}
Then,  we can conclude that $u$ is the mild solution to Problem \eqref{Main Equation}-\eqref{Initial condition}.
\end{proof}
\begin{theorem}(Uniqueness and stability)
	Suppose that Problem $\eqref{Main Equation}$-$\eqref{Initial condition}$ admits a mild solution $u$  in $C\left(I \to D^{1}(\Omega)\right)$. Then, this solution is globally unique. Furthermore, it depends continuously on the initial data.
\end{theorem}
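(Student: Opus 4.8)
The plan is to deduce both claims from the global Lipschitz bound \eqref{Global Lipschitzian} and the linear estimates of Lemma \ref{Linear estimate}, passing through a weighted sup-norm of the same flavour as the one defining $\mathbb{Y}(a,\sigma)$. First I would let $u,v\in C(I\to D^1(\Omega))$ be two mild solutions of \eqref{Main Equation}-\eqref{Initial condition} with initial data $u_0,v_0\in D^1(\Omega)$ respectively. Subtracting their mild representations, estimating the linear part by Lemma \ref{Linear estimate}\textit{(i)} in the borderline case $\mu=0$ (Case 1 of its proof, which gives $\bnorm{\mathscr{S}(t)}_{\mathscr{L}(D^1(\Omega))}\le\mathcal{M}$), and estimating the integral part by Lemma \ref{Linear estimate}\textit{(ii)} with $\nu=1$ and $\nu^*=1$ (so that the source is measured in $D^{0}(\Omega)=L^2(\Omega)$) together with \eqref{Global Lipschitzian}, I arrive at the single scalar inequality
\begin{align*}
\norm{u(t)-v(t)}_{D^1(\Omega)}\le\mathcal{M}\norm{u_0-v_0}_{D^1(\Omega)}+\mathscr{C}_1C_2\int_0^t(t-\tau)^{\alpha-1}\norm{u(\tau)-v(\tau)}_{D^1(\Omega)}\,\d\tau .
\end{align*}

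Next I would close this inequality by a weighted-norm argument, which is the natural way to cope with the singular kernel $(t-\tau)^{\alpha-1}$. For $\sigma>0$ set $N_\sigma:=\sup_{t\in I\setminus\{0\}}e^{-\sigma t}\norm{u(t)-v(t)}_{D^1(\Omega)}$, which is finite since $u-v\in C(I\to D^1(\Omega))$ and $I=[0,\mathscr{T}]$ is compact. Multiplying the displayed inequality by $e^{-\sigma t}$ and using $\int_0^t(t-\tau)^{\alpha-1}e^{-\sigma(t-\tau)}\,\d\tau\le\Gamma(\alpha)\sigma^{-\alpha}$, one obtains $N_\sigma\le\mathcal{M}\norm{u_0-v_0}_{D^1(\Omega)}+\mathscr{C}_1C_2\Gamma(\alpha)\sigma^{-\alpha}N_\sigma$. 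Choosing $\sigma=\sigma_0$ so large that $\mathscr{C}_1C_2\Gamma(\alpha)\sigma_0^{-\alpha}\le\frac{1}{2}$ yields $N_{\sigma_0}\le2\mathcal{M}\norm{u_0-v_0}_{D^1(\Omega)}$, whence
\begin{align*}
\norm{u-v}_{C(I\to D^1(\Omega))}=\sup_{t\in I}\norm{u(t)-v(t)}_{D^1(\Omega)}\le2\mathcal{M}e^{\sigma_0\mathscr{T}}\norm{u_0-v_0}_{D^1(\Omega)} .
\end{align*}

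This last estimate is precisely the asserted continuous dependence of the mild solution on the initial datum. Specializing to $u_0=v_0$ forces $N_{\sigma_0}=0$, i.e. $u\equiv v$ on $I$; since $\mathscr{T}>0$ was arbitrary, the mild solution is unique on $(0,\infty)$, which is the global uniqueness claim. The argument is routine modulo Lemma \ref{Linear estimate} and \eqref{Global Lipschitzian}; the only point requiring care is the singular weight $(t-\tau)^{\alpha-1}$, which rules out the classical Gronwall inequality. The weighted norm handles it here, but an equivalent route is to iterate the scalar inequality $n$ times and let $n\to\infty$, using that $(\mathscr{C}_1C_2\Gamma(\alpha)\mathscr{T}^\alpha)^n/\Gamma(n\alpha+1)\to0$ (a generalized fractional Gronwall lemma). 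One should also double-check the admissibility of the $\mu=0$ endpoint of Lemma \ref{Linear estimate}\textit{(i)}, which is exactly what Case 1 of that proof provides.
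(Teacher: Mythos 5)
Your proposal is correct, and it reaches the conclusion by a genuinely different (and in fact more elementary) route than the paper. The paper estimates the linear part with the singular bound $C_{1}t^{-\alpha}\norm{u_{0}-w_{0}}_{\D1}$ (the $\mu=1$ endpoint of Lemma \ref{Linear estimate}\textit{(i)}), which forces it to work in the weighted space $\mathbb{Y}(\alpha,\sigma_{0})$ with weight $t^{\alpha}e^{-\sigma_{0}t}$; it then squares the resulting integral inequality via H\"older's inequality, using the quantity $\mathcal{Q}(t,2\alpha,2\sigma_{0})$, and closes the argument with the fractional Gr\"onwall inequality (Lemma \ref{Fractional Gronwall inequality}), obtaining a bound involving a Mittag--Leffler factor. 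You instead take the $\mu=0$ endpoint, so the linear term is bounded by $\mathcal{M}\norm{u_{0}-v_{0}}_{\D1}$ with no singularity at $t=0$, and you close the inequality by a pure absorption argument in the exponentially weighted sup-norm $N_{\sigma}$, using the elementary bound $\int_{0}^{t}(t-\tau)^{\alpha-1}e^{-\sigma(t-\tau)}\,\d\tau\le\Gamma(\alpha)\sigma^{-\alpha}$ — no Gr\"onwall lemma of any kind is needed. Your route buys two things: it yields an explicit Lipschitz continuous-dependence constant $2\mathcal{M}e^{\sigma_{0}\mathscr{T}}$ in the plain $C(I\to\D1)$ norm, and it works under exactly the hypothesis of the theorem ($u\in C(I\to\D1)$ on a compact interval, hence $N_{\sigma}<\infty$), whereas the paper's proof tacitly places the solutions in $\mathbb{Y}(\alpha,\sigma_{0})$ and states its final estimate only in the weighted norm. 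The paper's approach, for its part, is the one that generalizes to the later sections where the iterates genuinely blow up like $t^{-\alpha\mu}$ at the origin and the weight $t^{\alpha\mu}$ cannot be dispensed with. Both arguments are sound; your alternative iteration remark (summing $(\mathscr{C}_{1}C_{2}\Gamma(\alpha)\mathscr{T}^{\alpha})^{n}/\Gamma(n\alpha+1)$) is essentially the proof of the fractional Gr\"onwall lemma the paper invokes.
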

\begin{proof}
The uniqueness and the stability results of the mild solution can be obtained by G\"onwall's inequality. Indeed, assume that $ u,w $ are the milds solution of Problem \eqref{Main Equation}-\eqref{Initial condition} in $ \mathbb{Y}(\alpha,\sigma_{0}) $ corresponding to the initial data $ u_0,w_0\in\D1 $. Then, we have
\begin{align}\label{Sect3_Stability_sp1}
&\norm{u(t)-w(t)}_{D^{1}(\Omega)}\nonumber\\
\leq& C_{1}t^{-\alpha}\norm{u_{0}-w_0}_{D^{1}(\Omega)}
\nonumber\\
&+
\mathscr{C}_{1}C_{2}\int_{0}^{t}(t-\tau)^{\alpha-1}\Vert u(\tau)-w(\tau)\Vert_{D^{1}(\Omega)}\d\tau.
\end{align}
Since $ u,w $ are in the space $ \mathbb{Y}(\alpha,\sigma_{0}) $, then, H\"older's inequality shows that
\begin{align*}
	&\left(t^\alpha e^{-\sigma_{0}t}\int_{0}^{t}(t-\tau)^{\alpha-1}\Vert u(\tau)-w(\tau)\Vert_{D^{1}(\Omega)}\d\tau\right)^2\nonumber\\
	\le&\mathcal{Q}\left(t,2\alpha,2\sigma_{0}\right)\int_{0}^{t}(t-\tau)^{\alpha-1}\left(\tau^\alpha e^{-\sigma_{0}\tau}\norm{u(\tau)-v(\tau)}_{\D1}\right)^2\d\tau.
\end{align*} 
Accordingly, the inequality \eqref{Sect3_Stability_sp1} becomes
\begin{align*}
&\left(t^{\alpha}e^{-\sigma_{0}t}\norm{u(t)-w(t)}_{D^{1}(\Omega)}\right)^2\\
\le&\left(C_1\norm{u_{0}-w_0}_{D^{1}(\Omega)}\right)^2+\frac{3}{4}\int_{0}^{t}(t-\tau)^{\alpha-1}\left(\tau^\alpha e^{-\sigma_{0}\tau}\norm{u(\tau)-v(\tau)}_{\D1}\right)^2\d\tau
\end{align*} 
We now apply the fractional Gr\"onwall's inequality (see Lemm \ref{Fractional Gronwall inequality}) to get the following estimate
\begin{align*}
	t^{\alpha}e^{-\sigma_{0}t}\norm{u(t)-w(t)}_{D^{1}(\Omega)}\le C_1\norm{u_{0}-w_0}_{D^{1}(\Omega)}E_{\alpha,1}\left(\frac{3\Gamma(\alpha)t^{\alpha}}{4}\right).
\end{align*}
The proof is completed.
\end{proof}
%%%Let $I =[0, \mathscr{T}]$. Suppose that $w, v$ are both the mild solution of Problem \eqref{Main Equation}-\eqref{Initial condition} in $C\left(I \to D^{1}(\Omega)\right)$ and satisfy \eqref{Sect3 Unique condition}. We consider the maximal time $T$ which ensures their coincidence, belongs to the time interval $I \setminus \{\mathscr{T}\}$. For any $T_{0}\in \left(0, \mathscr{T} -T\right),~t\in[T, T+T_{0}]$, we use the same techniques as in the proof of Lemma \ref{Sect3_Cauchy sequence} to find
%%%\begin{align}
%%%\norm{ w(t)-v(t)}_{D^{1}(\Omega)}&\leq C_{2}\int_{T}^{t}(t-\tau)^{\alpha-1}\norm{ H(w(\tau))-H(v(\tau))}_{L^{2}(\Omega)}\d\tau\nonumber\\
%%%&\leq \mathscr{C}_{1}C_{2}\int_{T}^{t}(t-\tau)^{\alpha-1}\norm{ w(\tau)-v(\tau)}_{D^{1}(\Omega)}\d\tau.
%%%\end{align}
%%%Since, $w, v$ satisfy \eqref{Sect3 Unique condition}, we can draw the conclusion below
%%%\begin{align}
%%%\max\left\{\sup_{\tau\in[T,T+T_{0}]}\Vert w(\tau)\Vert_{D^{1}(\Omega)},\sup_{\tau\in[T,T+T_{0}]}\Vert v(\tau)\Vert_{D^{1}(\Omega)}\right\}<\infty.
%%%\end{align}
%%%Hence, the following estimate holds
%%%\begin{align}
%%%\sup_{t\in[T,T+T_{0}]} \norm{w(t)-v(t)}_{D^{1}(\Omega)}\leq \mathscr{C}_{1}C_{2}T_{0}^{\alpha}\alpha^{-1}\sup_{t\in[T,T+T_{0}]} \norm{w(t)-v(t)}_{D^{1}(\Omega)}.
%%%\end{align}
%%%
%%%Therefore, if $T_{0}<  \frac{3\alpha}{4C_{1}C_{1}}$, then, $w(x, t)=v(x, t)$ for every $t\in[T, T+T_{0}]$. It is a contradiction and we have proved the uniqueness of the mild solution. \hfill$ \square $
\subsection{The non-trivial case of the vector field}
In this subsection, we assume that the vector field $ F $ in $ H(u)=(\eta\cdot\nabla)u+\nabla\cdot F(u) $, is given by $ F(u)=(u^2,u^2,...,u^2) $ and $ d\in\{2,3,4\} $. Then, we can check that $ H $ is a local Lipschitzian. To verify this statement, we first recall the embeddings $ L^{\frac{2d}{d+2}}(\Omega)\hookrightarrow D^{-1}(\Omega) $ in Lemma \ref{Fractional Sobolev embeddings}, which helps us to find
\begin{align*}
\Bnorm{\nabla\cdot F(u)}_{D^{-1}(\Omega)}\le2^{\frac{2d+2}{d+2}}\Bnorm{\big|\nabla u\big|u}_{L^{\frac{2d}{d+2}}(\Omega)}\le2^{\frac{2d+2}{d+2}}\bnorm{u}_{L^d(\Omega)}\bnorm{\nabla u}_{\L2},
\end{align*}
	where the H\"older inequality with two conjugate indices $\frac{d+2}{2}$ and $ \frac{d+2}{d} $ has been applied to get the second inequality. We now can easily check that
\begin{align*}
\begin{cases}
\D1\hookrightarrow L^d(\Omega),\qquad&\text{if }~ d=2,\\
\D1\hookrightarrow L^{\frac{2d}{d-2}}(\Omega)\hookrightarrow L^d(\Omega),&\text{if }~d\in\{3,4\}.
\end{cases}
\end{align*} 
Therefore, there exists a positive constant $ C_{\Omega} $ from the above embeddings such that that
	\begin{align*}
		\Bnorm{\big|\nabla u\big|u}_{D^{-1}(\Omega)}\le 2^{\frac{2d+2}{d+2}}C_{\Omega}\bnorm{u}^2_{\D1}.
	\end{align*}
	%Similarly, we also obtain
	%\begin{align}
	%\norm{\left|\nabla u-\nabla u\right|v}_{D^{-1}(\Omega)}\le\norm{v}_{\D1}\norm{\nabla u-\nabla v}_{\L2}
	%\end{align}
	As a consequence of the above estimates, we can find a constant $ C(\Omega,d) $ such that for any $ u,v\in\D1 $ there holds
	\begin{align*}
		\Bnorm{\nabla\cdot F(u)-\nabla\cdot F(v)}_{D^{-1}(\Omega)}\le C(\Omega,d)\left(\bnorm{ u}_{\D1}+\bnorm{ v}_{\D1}\right)\norm{u-v}_{\D1}.
	\end{align*}
In addition, from \eqref{Global Lipschitzian} and the fact that $ \L2\hookrightarrow D^{-1}(\Omega) $, for any $ u,v\in\D1 $, there exists a positive constant $ \overline{\mathscr{C}}_1 $ independent of $ u,v $ such that the following estimate holds
\begin{align}\label{Nonlinear estimate for Burgers source}
\bnorm{H\left(u\right)-H\left(v\right)}_{D^{-1}(\Omega)}\le\overline{\mathscr{C}}_1\left(\bnorm{ u}_{\D1}+\bnorm{ v}_{\D1}+1\right)\norm{u-v}_{\D1}
\end{align}
The main aim of this part is to investigate the well-posed results that either the mild solution exists globally or blows up in finite maximal time.
\begin{theorem}(Local existence)
Let $ \alpha\in(0,1) $. If $ u_0\in\D1 $, Problem \eqref{Main Equation}-\eqref{Initial condition} posesses a locally unique mild solution. 
\end{theorem}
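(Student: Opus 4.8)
The plan is to construct the local mild solution by Banach's fixed point theorem in the \emph{unweighted} space $C\left(I\to\D1\right)$ with $I=[0,\mathscr{T}]$ for a sufficiently short horizon $\mathscr{T}$, restricted to a closed ball around the datum. Set
$$
\Phi(u)(t):=\mathscr{S}(t)u_0+\int_0^t\mathscr{R}(t-\tau)H(u(\tau))\,\d\tau,\qquad t\in I,
$$
and work on $\mathbb{B}_R:=\big\{u\in C\left(I\to\D1\right):\ \norm{u}_{C(I\to\D1)}\le R,\ u(0)=u_0\big\}$, the radius $R$ and the length $\mathscr{T}$ to be fixed along the way. The ingredients I would feed into the estimates are: Lemma~\ref{Linear estimate}(i) with $\mu=0$ (Case~1 of its proof), giving $\bnorm{\mathscr{S}(t)}_{\mathscr{L}(\D1)}\le C_1$, and Lemma~\ref{Linear estimate}(ii) with $\nu=1,~\nu^*=2$, giving $\bnorm{\mathscr{R}(t)}_{\mathscr{L}(\Doneminus,\D1)}\le C_2 t^{\alpha-1}$; together with the local Lipschitz bound \eqref{Nonlinear estimate for Burgers source} and $H(0)=0$, which yields $\bnorm{H(u(\tau))}_{\Doneminus}\le\overline{\mathscr{C}}_1(\norm{u(\tau)}_{\D1}+1)\norm{u(\tau)}_{\D1}$.

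First I would verify the self-mapping property. For $u\in\mathbb{B}_R$,
$$
\norm{\Phi(u)(t)}_{\D1}\le C_1\norm{u_0}_{\D1}+C_2\int_0^t(t-\tau)^{\alpha-1}\bnorm{H(u(\tau))}_{\Doneminus}\,\d\tau\le C_1\norm{u_0}_{\D1}+\frac{C_2\overline{\mathscr{C}}_1(R+1)R}{\alpha}\,\mathscr{T}^\alpha,
$$
since $\int_0^t(t-\tau)^{\alpha-1}\d\tau=t^\alpha/\alpha\le\mathscr{T}^\alpha/\alpha$. Fixing $R:=2C_1\norm{u_0}_{\D1}+1$ and then choosing $\mathscr{T}$ so small that $C_2\overline{\mathscr{C}}_1(R+1)R\,\mathscr{T}^\alpha/\alpha\le R/2$ forces $\Phi(\mathbb{B}_R)\subset\mathbb{B}_R$. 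For the contraction, the same computation applied to $\Phi(u)-\Phi(v)$ and \eqref{Nonlinear estimate for Burgers source} gives
$$
\norm{\Phi(u)(t)-\Phi(v)(t)}_{\D1}\le C_2\overline{\mathscr{C}}_1\int_0^t(t-\tau)^{\alpha-1}\big(\norm{u(\tau)}_{\D1}+\norm{v(\tau)}_{\D1}+1\big)\norm{u(\tau)-v(\tau)}_{\D1}\,\d\tau\le\frac{C_2\overline{\mathscr{C}}_1(2R+1)\mathscr{T}^\alpha}{\alpha}\,\norm{u-v}_{C(I\to\D1)},
$$
so after further shrinking $\mathscr{T}$ to make the prefactor $<1$, $\Phi$ is a contraction on $\mathbb{B}_R$. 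Before invoking the fixed point theorem one must still check $\Phi(u)\in C\left(I\to\D1\right)$: continuity of $t\mapsto\mathscr{S}(t)u_0$ at every $t\in I$ (with $\mathscr{S}(0)u_0=u_0$) and of the Duhamel integral, which I would establish exactly as in the proof of Lemma~\ref{Sect3_continuous}, by the same $\varepsilon$-splitting and dominated convergence argument (Claims~1 and~2 there), the only change being that $H(u(\cdot))$ is now treated as a $\Doneminus$-valued map uniformly bounded by $\overline{\mathscr{C}}_1(R+1)R$ on $\mathbb{B}_R$. The unique fixed point of $\Phi$ in $\mathbb{B}_R$ is then the desired locally unique mild solution; to upgrade uniqueness from "within $\mathbb{B}_R$" to "within the common interval of existence in $C(I'\to\D1)$", I would compare two solutions $u,w$ with the same datum, use their boundedness on a small subinterval together with \eqref{Nonlinear estimate for Burgers source} and the fractional Gr\"onwall inequality (Lemma~\ref{Fractional Gronwall inequality}) to deduce $u\equiv w$ there, and then run a standard connectedness (open-and-closed) argument.

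The contraction estimates are routine once the duality pairing $\Doneminus$/$\D1$ supplied by Lemma~\ref{Linear estimate}(ii) is matched with the nonlinear bound \eqref{Nonlinear estimate for Burgers source}, the key mechanism being simply that $\int_0^t(t-\tau)^{\alpha-1}\d\tau\to0$ as $\mathscr{T}\to0$, which absorbs the superlinear loss. I therefore expect the genuine obstacle to be the verification that the Duhamel term stays in $C\left(I\to\D1\right)$: because the quadratic piece $\nabla\cdot F(u)$ lives only in $\Doneminus$ and the resolvent kernel $\mathscr{R}(t)$ is singular at $t=0$, the time-continuity has to be argued with care, reproducing the splitting and dominated convergence of Lemma~\ref{Sect3_continuous} in the present $\Doneminus\to\D1$ setting. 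Note also that, in contrast with the advection-only subsection, the superlinearity of $H$ precludes a global-in-time conclusion here, which is why only a local statement is claimed.
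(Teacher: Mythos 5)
Your proposal is correct and follows essentially the same route as the paper: the same linear estimates (Lemma~\ref{Linear estimate}(i) with $\mu=0$ and Lemma~\ref{Linear estimate}(ii) with $\nu^*=2$), the same local Lipschitz bound \eqref{Nonlinear estimate for Burgers source}, and a contraction argument in $C(I\to D^{1}(\Omega))$ over a sufficiently short interval, the only cosmetic difference being that the paper runs the explicit Picard iteration and shows it is Cauchy while you invoke Banach's fixed point theorem on a closed ball. If anything, fixing the radius $R$ and verifying the self-mapping property first is slightly more careful than the paper's argument, whose contraction step tacitly uses a uniform bound on $\norm{w_n}_{C(I\to D^{1}(\Omega))}$ that is never explicitly established.
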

\begin{proof}
	Thanks to the assumption $ u_0\in\D1 $, Lemma \ref{Linear estimate}\textit{(i)} implies
	\begin{align}\label{Sp1 Sect4.2}
		\bnorm{\mathscr{S}(t)u_0}_{\D1}\le C_1\norm{u_0}_{\D1}.
	\end{align}
	For any $ w,v\in C(I\to\D1) $, from Lemma \ref{Linear estimate}\textit{(ii)} and inequality \eqref{Nonlinear estimate for Burgers source} we have
	\begin{align*}
		&\int_{0}^{t}\bnorm{\mathscr{R}(t-\tau)\Big(H(w(\tau))-H(v(\tau))\Big)}_{\D1}\d\tau\nonumber\\
		\le& C_2\int_{0}^{t}(t-\tau)^{\alpha-1}\norm{H(w(\tau))-H(v(\tau))}_{D^{-1}(\Omega)}\d\tau\nonumber\\
		\le& \overline{\mathscr{C}}_1C_2\int_{0}^{t}(t-\tau)^{\alpha-1}\left(\norm{ w(\tau)}_{\D1}+\norm{ v(\tau)}_{\D1}+1\right)\norm{w(\tau)-v(\tau)}_{\D1}\d\tau.
	\end{align*}
This implies that
\begin{align}\label{Sp2 Sect4.2}
&\int_{0}^{t}\bnorm{\mathscr{R}(t-\tau)\Big(H(w(\tau))-H(v(\tau))\Big)}_{\D1}\d\tau
\nonumber\\
\le&\overline{\mathscr{C}}_1C_2\alpha^{-1}\mathscr{T}^\alpha\Bigg(\norm{w}_{C(I\to\D1)}+\norm{v}_{C(I\to\D1)}+1\Bigg)\norm{w-v}_{C(I\to\D1)}.
\end{align}
	Then, if we consider the Picard sequence
	\begin{flalign*}
		w_{1}(x,t)&:=\mathscr{S}(t)u_0(x),\\
		w_{n+1}(x,t)&:=\mathscr{S}(t)u_0(x)+\int_{0}^{t}\mathscr{R}(t-\tau)H(w_{n}(x,\tau))\d\tau,
	\end{flalign*}
	Inequality \eqref{Sp1 Sect4.2} immediately shows that $ w_1\in C(I\to\D1) $. Besides, for $ w_n\in C(I\to\D1) $, Estimate \eqref{Sp2 Sect4.2} gives us the following result
	\begin{align*}
		&\norm{w_{n+1}(\tau)-w_1(\tau)}_{\D1}\\
		\le&\int_{0}^{t}\bnorm{\mathscr{R}(t-\tau)H(w_n(\tau))}_{\D1}\d\tau\nonumber\\
		\le&\overline{\mathscr{C}}_1C_2\alpha^{-1}\mathscr{T}^\alpha\Big(\norm{w_n}^2_{C(I\to\D1)}+\norm{w_n}_{C(I\to\D1)}\Big),
	\end{align*} 
	i.e.,
	\begin{align*}
		\norm{w_{n+1}-w_1}_{C(I\to\D1)}\le\overline{\mathscr{C}}_1C_2\alpha^{-1}\mathscr{T}^\alpha\Big(\norm{w_n}^2_{C(I\to\D1)}+\norm{w_n}_{C(I\to\D1)}\Big).
	\end{align*}
	Then, we can conclude that $ \{w_n\}_{n=1}^\infty\subset C(I\to\D1). $ Furthermore, if $ \mathscr{T} $ is small enough, Estimate \eqref{Sp2 Sect4.2} infers for $ w_{n-1},w_n,w_{n+1}\in C(I\to\D1)$ that
	\begin{align*}
		\norm{w_{n+1}-w_{n}}_{C(I\to\D1)}\le\frac{3}{4}\norm{w_{n}-w_{n-1}}_{C(I\to\D1)}.
	\end{align*}
	It means $ \{w_n\}_{n=1}^\infty $ is a Cauchy sequence in $ C(I\to\D1) $. Consequently, there exists a locally unique mild solution $ u $ of Problem \eqref{Main Equation}-\eqref{Initial condition} in $ C(I\to\D1) $.
\end{proof}
\begin{lemma}\label{ExtensionLemmaSect4}
	Let $ u_0 $ be in $ \D1 $. Suppose that $ u $ is a unique mild solution to Problem \eqref{Main Equation}-\eqref{Initial condition} on $ [0,\mathscr{T}] $. Then this solution can be extended to $ [0,\mathscr{T}+T], $ for some $ T>0. $
\end{lemma}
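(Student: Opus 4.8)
The plan is to extend the solution by re-solving the integral equation from the time level $\mathscr{T}$, treating $u(\cdot,\mathscr{T})$ as new initial data, and then gluing the two pieces together using the semigroup-type decomposition of the memory integral. First I would fix $u_\mathscr{T}:=u(\cdot,\mathscr{T})\in\D1$ (this membership is guaranteed since $u\in C([0,\mathscr{T}]\to\D1)$) and, for $T>0$ to be chosen, set up on the interval $[\mathscr{T},\mathscr{T}+T]$ the auxiliary equation
\begin{align*}
v(x,t)=\mathscr{S}(t-\mathscr{T})u_\mathscr{T}(x)+\mathscr{P}(x,t)+\int_{\mathscr{T}}^{t}\mathscr{R}(t-\tau)H(v(x,\tau))\d\tau,
\end{align*}
where $\mathscr{P}(x,t):=\int_{0}^{\mathscr{T}}\mathscr{R}(t-\tau)H(u(x,\tau))\d\tau$ is a known forcing term inherited from the already-constructed solution on $[0,\mathscr{T}]$. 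Here one must be careful: because the kernel $G_2$ carries a genuine memory (unlike the classical heat semigroup), the naive identity $\mathscr{S}(t)=\mathscr{S}(t-\mathscr{T})\mathscr{S}(\mathscr{T})$ fails, so I would not claim it; instead I would keep the term $\mathscr{S}(t-\mathscr{T})u_\mathscr{T}+\mathscr{P}(x,t)$ as the full ``linear part'' and verify directly, by the mild-solution formula and Fubini, that any solution $v$ of this auxiliary equation, concatenated with $u$ on $[0,\mathscr{T}]$, is a mild solution of Problem \eqref{Main Equation}-\eqref{Initial condition} on $[0,\mathscr{T}+T]$.

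Next I would check that the forcing term $\mathscr{P}$ lies in $C([\mathscr{T},\mathscr{T}+T]\to\D1)$ and is bounded there: using Lemma \ref{Linear estimate}(\textit{ii}) with $\nu^*=1$, $\norm{\mathscr{R}(t-\tau)H(u(\tau))}_{\D1}\le C_2(t-\tau)^{\alpha-1}\norm{H(u(\tau))}_{D^{-1}(\Omega)}$, and for $\tau\in[0,\mathscr{T}]$, $t\ge\mathscr{T}$ we have $t-\tau\ge\mathscr{T}-\tau$, so the singularity $(t-\tau)^{\alpha-1}$ is in fact integrable and even uniformly bounded away from $t=\mathscr{T}$; combined with \eqref{Nonlinear estimate for Burgers source} (with $v=0$, using $H(0)=0$) and the bound $\norm{u}_{C([0,\mathscr{T}]\to\D1)}<\infty$, this gives $\sup_{t\in[\mathscr{T},\mathscr{T}+T]}\norm{\mathscr{P}(t)}_{\D1}<\infty$; continuity in $t$ follows from the dominated convergence argument already used in Lemma \ref{Sect3_continuous} (Claim 1 and Claim 2 there). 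Also $\mathscr{S}(t-\mathscr{T})u_\mathscr{T}$ is continuous and bounded on $[\mathscr{T},\mathscr{T}+T]$ by Lemma \ref{Linear estimate}(\textit{i}) with $\mu=0$. Hence the affine map
\begin{align*}
\Phi(v)(x,t):=\mathscr{S}(t-\mathscr{T})u_\mathscr{T}(x)+\mathscr{P}(x,t)+\int_{\mathscr{T}}^{t}\mathscr{R}(t-\tau)H(v(x,\tau))\d\tau
\end{align*}
is well-defined on the Banach space $C([\mathscr{T},\mathscr{T}+T]\to\D1)$.

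Then I would run exactly the Picard/contraction argument of the local-existence theorem, now on the shifted interval: setting $R:=1+\sup_{t}\norm{\mathscr{S}(t-\mathscr{T})u_\mathscr{T}+\mathscr{P}(t)}_{\D1}$ and working in the closed ball of radius, say, $2R$, estimate \eqref{Sp2 Sect4.2} (with the time-length $\mathscr{T}^\alpha$ replaced by $T^\alpha$ and the integration over $[\mathscr{T},t]$) shows $\Phi$ maps the ball into itself and is a contraction provided $\overline{\mathscr{C}}_1C_2\alpha^{-1}T^\alpha(4R+1)\le\tfrac{3}{4}$, i.e. for $T$ small enough depending only on $R$, $\alpha$, $\overline{\mathscr{C}}_1$, $C_2$. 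The Banach fixed point theorem then produces a unique $v\in C([\mathscr{T},\mathscr{T}+T]\to\D1)$, and uniqueness of $u$ on $[0,\mathscr{T}]$ together with uniqueness of $v$ shows the concatenation is the unique extension; continuity at the junction $t=\mathscr{T}$ is immediate since $v(\mathscr{T})=\mathscr{S}(0)u_\mathscr{T}+\mathscr{P}(\mathscr{T})=u_\mathscr{T}+\int_0^\mathscr{T}\mathscr{R}(\mathscr{T}-\tau)H(u(\tau))\d\tau=u(\mathscr{T})$.

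The main obstacle I anticipate is the bookkeeping around the memory term — verifying rigorously, via Fubini and the explicit eigenfunction expansions of $\mathscr{S}$ and $\mathscr{R}$, that the glued function genuinely satisfies the single mild-solution formula on all of $[0,\mathscr{T}+T]$ (the fractional kernels do not enjoy a clean semigroup property, so the split into $\mathscr{S}(t-\mathscr{T})u_\mathscr{T}+\mathscr{P}$ versus $\mathscr{S}(t)u_0+\int_0^{\mathscr{T}}\mathscr{R}(t-\tau)H(u)$ must be reconciled by hand). The remaining analytic ingredients — the bound and continuity of $\mathscr{P}$, and the contraction estimate — are routine adaptations of Lemma \ref{Linear estimate}, estimate \eqref{Nonlinear estimate for Burgers source}, and the local-existence proof already given.
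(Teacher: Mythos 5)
There is a genuine error in your ansatz, and it sits exactly at the step you deferred (``verify directly \dots that the concatenation is a mild solution''): that verification fails. A function $v$ on $[\mathscr{T},\mathscr{T}+T]$ that glues with $u$ into a mild solution on $[0,\mathscr{T}+T]$ must satisfy
\begin{align*}
v(t)=\mathscr{S}(t)u_0+\int_{0}^{\mathscr{T}}\mathscr{R}(t-\tau)H(u(\tau))\,\mathrm{d}\tau+\int_{\mathscr{T}}^{t}\mathscr{R}(t-\tau)H(v(\tau))\,\mathrm{d}\tau,
\end{align*}
i.e.\ the correct linear part is $\mathscr{S}(t)u_0+\mathscr{P}(t)$, not $\mathscr{S}(t-\mathscr{T})u_{\mathscr{T}}+\mathscr{P}(t)$. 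Your choice double-counts the history: $u_{\mathscr{T}}=u(\mathscr{T})$ already contains $\int_0^{\mathscr{T}}\mathscr{R}(\mathscr{T}-\tau)H(u(\tau))\,\mathrm{d}\tau$, and you then add the memory integral again through $\mathscr{P}$. Your own junction check exposes this: $v(\mathscr{T})=u_{\mathscr{T}}+\mathscr{P}(\mathscr{T})=u(\mathscr{T})+\int_0^{\mathscr{T}}\mathscr{R}(\mathscr{T}-\tau)H(u(\tau))\,\mathrm{d}\tau$, which equals $u(\mathscr{T})$ only when the source vanishes; the final equality you assert silently treats $u_{\mathscr{T}}$ as if it were $\mathscr{S}(\mathscr{T})u_0$. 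Since, as you correctly note, $\mathscr{S}(t-\mathscr{T})u(\mathscr{T})\neq\mathscr{S}(t)u_0$ for these fractional kernels, the discrepancy does not disappear for $t>\mathscr{T}$ either, so the fixed point of your map $\Phi$ is not an extension of $u$.

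The repair is small but essential, and it is what the paper actually does: keep the \emph{original} mild-solution formula anchored at $t=0$, i.e.\ iterate
\begin{align*}
w_{n+1}(t)=\mathscr{S}(t)u_0+\int_{0}^{t}\mathscr{R}(t-\tau)H(w_{n}(\tau))\,\mathrm{d}\tau
\end{align*}
on the space $\mathbb{E}$ of continuous functions on $[0,\mathscr{T}+T]$ that coincide with $u$ on $[0,\mathscr{T}]$ and stay within an $R$-ball of $u(\mathscr{T})$ on $[\mathscr{T},\mathscr{T}+T]$. On $[0,\mathscr{T}]$ every iterate automatically equals $u$, and on $[\mathscr{T},\mathscr{T}+T]$ the invariance and contraction estimates are obtained by splitting into the three pieces $\mathscr{S}(t)u_0-\mathscr{S}(\mathscr{T})u_0$, the tail integral over $[\mathscr{T},t]$, and the difference of kernels $\mathscr{R}(t-\tau)-\mathscr{R}(\mathscr{T}-\tau)$ on $[0,\mathscr{T}]$ (controlled via the quantity $\mathscr{E}(k,t,\tau,\varepsilon)$ from Lemma \ref{Sect3_continuous}). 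Your estimates for $\mathscr{P}$ and the contraction constant are the right ingredients and would carry over verbatim once the linear part is corrected, but as written the proposal does not prove the lemma.
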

\begin{proof}
	Assume that $ u $ is the unique mild solution of Problem \eqref{Main Equation}-\eqref{Initial condition} on $ [0,\mathscr{T}] $ and $ I=[0,\mathscr{T}+T] $, where $ T>0 $ will be chosen later. For a constant $  R>0 $, we consider the space below
	\begin{align*}
		\mathbb{E}:=\left\{w\in C(I\to  \D1)\quad\Biggl|
		\begin{array}{*{11}{ll}}
			&w(t,\cdot) =u(t,\cdot),&\forall t\in \left[0,\mathscr{T}\right],\\
			&\norm{w(t,\cdot)-u(T,\cdot)}_{\D1}\le R,&\forall t\in \left[\mathscr{T},\mathscr{T}+T\right]
		\end{array} 
		\right\}.
	\end{align*}
	We aim to to show that Problem \eqref{Main Equation}-\eqref{Initial condition} has a unique mild solution that belongs to $ \mathbb{E} $, then it is the extension of $ u $ to $ I $. To this end, we take an element $ \overline{w} $ of $ \mathbb{E} $ and define the sequence $ \{w_n\}_{n=1}^\infty $ as
	\begin{align*}
		w_{1}(x,t)&:=\mathscr{S}(t)u_0(x)+\int_{0}^{t}\mathscr{R}(t-\tau)H(\overline{w}(x,\tau))\d\tau ,\\
		w_{n+1}(x,t)&:=\mathscr{S}(t)u_0(x)+\int_{0}^{t}\mathscr{R}(t-\tau)H(w_{n}(x,\tau))\d\tau.
	\end{align*}
	We can check easily for every $ t\in[0,\mathscr{T}] $, that
	\begin{align*}
		w_1(x,t)=w_2(x,t)=...=w_n(x,t)=u(x,t),\quad\forall x\in\Omega.
	\end{align*}
	On the contrary, when $ t\in[\mathscr{T},\mathscr{T}+T] $, a simple calculation shows that
	\begin{align*}
		&\norm{u_1(t)-u(\mathscr{T})}_{\D1}\\
		\le&\bnorm{\mathscr{S}(t)u_0-\mathscr{S}(\mathscr{T})u_0}_{\D1}\nonumber\\
		&+\int_{\mathscr{T}}^{t}\bnorm{\mathscr{R}(t-\tau)H(w_{n}(x,\tau))}_{\D1}\d\tau\\
		&+\int_{0}^{\mathscr{T}}\bnorm{\mathscr{R}(t-\tau)H(w_{n}(x,\tau))-\mathscr{R}(\mathscr{T}-\tau)H(w_{n}(x,\tau))}_{\D1}\d\tau.\nonumber
	\end{align*}
	Similar to Lemma \ref{Sect3_continuous}, we derive
	\begin{align*}
		\bnorm{\mathscr{S}(t)u_0-\mathscr{S}(\mathscr{T})u_0}_{\D1}\le\alpha^{-1}\mathcal{M}(t-\mathscr{T})^\alpha\norm{u_0}_{\D1}\le\alpha^{-1}\mathcal{M}T^\alpha\norm{u_0}_{\D1}
	\end{align*}
	and 
	\begin{align*}
		&\int_{0}^{\mathscr{T}}\bnorm{\mathscr{R}(t-\tau)H(w_{n}(x,\tau))-\mathscr{R}(\mathscr{T}-\tau)H(w_{n}(x,\tau))}_{\D1}\d\tau\\
		\le&\frac{\mathcal{M}\overline{\mathscr{C}}_1C_2\big|t^\alpha-(t-\mathscr{T})^\alpha-\mathscr{T}^\alpha\big|}{\alpha(1-\alpha)\theta^{\frac{1}{2}}_{1}}\Big(\norm{w_n}^2_{C(I\to\D1)}+\norm{w_n}_{C(I\to\D1)}\Big).
	\end{align*}
	In addition, the following estimate also holds
	\begin{align*}
		&\int_{\mathscr{T}}^{t}\norm{\mathscr{R}(t-\tau)H(w_{n}(x,\tau))}_{\D1}\d\tau\\
		\le&\overline{\mathscr{C}}_1C_2\int_{\mathscr{T}}^{t}(t-\tau)^{\alpha-1}\Big(\norm{w_n}^2_{\D1}+\norm{w_n}_{\D1}\Big)\d\tau\nonumber\\
		\le&\frac{\overline{\mathscr{C}}_1C_2(t-\mathscr{T})^\alpha}{\alpha}\Big(\norm{w_n}^2_{C(I\to\D1)}+\norm{w_n}_{C(I\to\D1)}\Big)\\
		\le&\frac{\overline{\mathscr{C}}_1C_2T^\alpha}{\alpha}\Big(\norm{w_n}^2_{C(I\to\D1)}+\norm{w_n}_{C(I\to\D1)}\Big).
	\end{align*}
	From the above estimates, by taking $ T $ small enough, one has
	\begin{align*}
		\norm{u_1(t)-u(\mathscr{T})}_{\D1}\le R.
	\end{align*}
	It follows that $ w_1 $ is in $ \mathbb{E} $. The same arguments show that if $ w_n\in\mathbb{E} $ for $ n\ge1 $, then $ w_{n+1} $ also belongs to $ \mathbb{E} $. To complete the proof, we have to show that $ \{w_n\}_{n=1}^\infty $ is a Cauchy sequence in $ \mathbb{E} $. Indeed, for $ w_{n-1},w_n\in\mathbb{E} $ and $ t\ge\mathscr{T} $, we obtain 
	\begin{align*}
		&\norm{w_{n+1}(t)-w_n(t)}_{\D1}\\
		\le&\int_{\mathscr{T}}^{t}\bnorm{\mathscr{R}(t-\tau)H(w_{n}(\tau))-w_{n-1}(x,\tau)}_{\D1}\d\tau\nonumber\\
		\le&\int_{\mathscr{T}}^{t}\overline{\mathscr{C}}_1C_2(t-\tau)^{\alpha-1}\Bigg(\norm{w_{n-1}(\tau)}_{\D1}+\norm{w_{n}(\tau)}_{\D1}+1\Bigg)\norm{w_{n}(\tau)-w_{n-1}(\tau)}_{\D1}\d\tau\nonumber\\
		\le&2\overline{\mathscr{C}}_1C_2T^\alpha\alpha^{-1}\Bigg(\left(\norm{u(\mathscr{T})}_{\D1}+ R\right)+1\Bigg)\norm{w_{n}-w_{n-1}}_{C(I\to\D1)},
	\end{align*}
	provided that for any $ w\in\mathbb{E} $, by the triangle inequality, the following estimate holds for any $ t\in[\mathscr{T},\mathscr{T}+T] $
	\begin{align*}
		\norm{w(t)}_{\D1}\le\norm{u(\mathscr{T})}_{\D1}+ R.
	\end{align*}  
	Hence, if $ T $ is sufficiently small, we obtain
	\begin{align*}
		\norm{w_{n+1}-w_{n}}_{C(I\to\D1)}\le\frac{3}{4}\norm{w_{n}-w_{n-1}}_{C(I\to\D1)},
	\end{align*}
	the same arguments as \eqref{Cauchy-Picard argument} show that $ \{w_n\}_{n=1}^\infty $ is a Cauchy sequence in $ \mathbb{E} $. As we said before, $ \mathbb{E} $ is a Banach space, then we can find a limit function of $ \{w_n\}_{n=1}^\infty $ which is the unique mild solution to Problem \eqref{Main Equation}-\eqref{Initial condition} on $ [0,\mathscr{T}+T]. $ The proof is completed.
\end{proof}
\begin{theorem}(Global and finite time blowup solution)
	Let $ u_0 $ be in $ \D1 $ and $ u $ be the unique mild solution to Problem \eqref{Main Equation}-\eqref{Initial condition}. If we define 
	\begin{align*}
		\mathscr{T}_{\max}:=\sup\Big\{\mathscr{T}>0~\big|~u\text{ exists uniquely on }[0,\mathscr{T}]\Big\},
	\end{align*}
	then either $ \mathscr{T}_{\max}=\infty $ ~or
	$ \lim\limits_{t\to\mathscr{T}_{\max}^-}\norm{u(t)}_{\D1}=\infty $.
\end{theorem}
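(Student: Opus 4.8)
The plan is to argue by contradiction, using the local continuation mechanism of Lemma~\ref{ExtensionLemmaSect4}. Suppose $\mathscr{T}_{\max}<\infty$ and that, contrary to the assertion, $\norm{u(t)}_{\D1}$ does not tend to $\infty$ as $t\uparrow\mathscr{T}_{\max}$; then there are constants $M,\delta>0$ with $\norm{u(t)}_{\D1}\le M$ for all $t\in[\mathscr{T}_{\max}-\delta,\mathscr{T}_{\max})$. Since $u\in C\big([0,\mathscr{T}_{\max}-\delta]\to\D1\big)$ and $[0,\mathscr{T}_{\max}-\delta]$ is compact, this upgrades to a uniform bound $\sup_{t\in[0,\mathscr{T}_{\max})}\norm{u(t)}_{\D1}=:\overline{M}<\infty$, and then \eqref{Nonlinear estimate for Burgers source} together with $H(0)=0$ gives $\sup_{t\in[0,\mathscr{T}_{\max})}\norm{H(u(t))}_{\Doneminus}=:\overline{M}_H<\infty$.

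First I would show that, under this a priori bound, $u$ extends continuously to the closed interval $[0,\mathscr{T}_{\max}]$. For $0<s<t<\mathscr{T}_{\max}$ I split
\begin{align*}
u(t)-u(s)=\big[\mathscr{S}(t)-\mathscr{S}(s)\big]u_0+\int_{0}^{s}\big[\mathscr{R}(t-\tau)-\mathscr{R}(s-\tau)\big]H(u(\tau))\,\d\tau+\int_{s}^{t}\mathscr{R}(t-\tau)H(u(\tau))\,\d\tau
\end{align*}
and estimate the three terms exactly as in Lemma~\ref{Sect3_continuous} and Lemma~\ref{ExtensionLemmaSect4}: the first by $\alpha^{-1}\mathcal{M}\,(t^{\alpha}-s^{\alpha})\norm{u_0}_{\D1}$ (via Lemma~\ref{Derivative of Mittag-Leffler} and Lemma~\ref{Bound of Mittag-Leffler}); the third by $\overline{M}_H C_2\,\alpha^{-1}(t-s)^{\alpha}$ (via Lemma~\ref{Linear estimate}\,(ii)); and, using the kernel-difference estimate \eqref{Inequality for E(k,t,tau,epsilon)}, the middle one by a constant multiple of $\overline{M}_H\int_{0}^{s}\big|(t-\tau)^{\alpha-1}-(s-\tau)^{\alpha-1}\big|\,\d\tau=\alpha^{-1}\overline{M}_H\big(s^{\alpha}+(t-s)^{\alpha}-t^{\alpha}\big)$. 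Each of these tends to $0$ as $s,t\uparrow\mathscr{T}_{\max}$, so $\{u(t)\}_{t<\mathscr{T}_{\max}}$ is Cauchy in $\D1$; denote its limit by $u(\mathscr{T}_{\max})$. Passing to the limit in $u(t)=\mathscr{S}(t)u_0+\int_0^t\mathscr{R}(t-\tau)H(u(\tau))\,\d\tau$ (whose right-hand side is likewise continuous up to $t=\mathscr{T}_{\max}$ by the same estimates) yields that $u\in C\big([0,\mathscr{T}_{\max}]\to\D1\big)$ is a mild solution of \eqref{Main Equation}-\eqref{Initial condition} on the closed interval $[0,\mathscr{T}_{\max}]$, and it is the unique one there since it coincides with the given solution on $[0,\mathscr{T}_{\max})$.

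Then I would apply Lemma~\ref{ExtensionLemmaSect4} with $\mathscr{T}=\mathscr{T}_{\max}$: the mild solution on $[0,\mathscr{T}_{\max}]$ can be extended to a mild solution on $[0,\mathscr{T}_{\max}+T]$ for some $T>0$, and by the uniqueness this extension agrees with $u$, so $u$ in fact exists uniquely on $[0,\mathscr{T}_{\max}+T]$. This contradicts the definition of $\mathscr{T}_{\max}$ as the supremum of the existence times. Hence no bound $M$ as above can exist, i.e.\ $\mathscr{T}_{\max}<\infty$ forces $\lim_{t\to\mathscr{T}_{\max}^-}\norm{u(t)}_{\D1}=\infty$, which is the claimed alternative.

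The step I expect to be the main obstacle is the continuation of $u$ as a genuine solution up to the closed endpoint $\mathscr{T}_{\max}$: one must re-run the singular-kernel continuity estimates of Lemma~\ref{Sect3_continuous}/Lemma~\ref{ExtensionLemmaSect4} with $u$ itself playing the role of the frozen history inside the convolution, checking in particular that the quadratic growth of $H$ causes no difficulty once $u$ is known to be bounded on $[0,\mathscr{T}_{\max})$ — this is exactly where \eqref{Nonlinear estimate for Burgers source} and the embedding $\L2\hookrightarrow\Doneminus$ are used to pass from $\overline{M}$ to $\overline{M}_H$. An alternative that avoids passing to the closed interval is to observe that, under the bound $\overline{M}$, the continuation time produced by the proof of Lemma~\ref{ExtensionLemmaSect4} is bounded below by a positive constant $T_0$ independent of the base point $\mathscr{T}\in(0,\mathscr{T}_{\max})$, and then to extend from $\mathscr{T}=\mathscr{T}_{\max}-T_0/2$ past $\mathscr{T}_{\max}$, reaching the same contradiction.
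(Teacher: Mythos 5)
Your proposal is correct and follows essentially the same route as the paper: assume $\mathscr{T}_{\max}<\infty$ with $u$ bounded in $\D1$ near $\mathscr{T}_{\max}$, show $u(t)$ is Cauchy as $t\uparrow\mathscr{T}_{\max}$ via the same three-term splitting (difference of the $\mathscr{S}$ parts, the kernel-difference integral on $[0,s]$ controlled by \eqref{Inequality for E(k,t,tau,epsilon)}, and the tail integral controlled by Lemma \ref{Linear estimate}(ii) and \eqref{Nonlinear estimate for Burgers source}), extend $u$ to the closed interval, and invoke Lemma \ref{ExtensionLemmaSect4} for the contradiction. Your explicit verification that the limit satisfies the mild formulation at $t=\mathscr{T}_{\max}$ is a detail the paper leaves implicit, but the argument is the same.
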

\begin{proof}
	Assume that the maximal time point $ \mathscr{T}_{\max} $ is finite and $ I=[0,\mathscr{T}) $. Let us make a contradict assumption that there exists a finite constant $ \mathscr{M} $ such that
	\begin{align}
		\norm{u(t)}_{\D1}\le\mathscr{M},\quad\forall t\in[0,\mathscr{T}_{\max}).
	\end{align}  
	Let $ \{t_n\}_{n=1}^\infty $ be a sequence in $ I $ satisfying $ t_n\xrightarrow{n\rightarrow\infty}\mathscr{T}_{\max} $. We aim to show that $ \{u_n\}_{n=1}^\infty=\{u(t_n)\}_{n=1}^\infty $ is a Cauchy sequence in $ \D1 $. In fact, for $ t_n<t_m $, we find that
	\begin{align*}
		&\norm{u_{m}-u_{n}}_{\D1}\\
		\le&\bnorm{\mathscr{S}(t_m)u_0-\mathscr{S}(t_n)u_0}_{\D1}\nonumber\\
		&+\int_{t_n}^{t_m}\bnorm{\mathscr{R}(t_m-\tau)H(u(\tau))}_{\D1}\d\tau\nonumber\\
		&+\int_{0}^{t_n}\bnorm{\mathscr{R}(t_m-\tau)H(u(\tau))-\mathscr{R}(t_n-\tau)H(u(\tau))}_{\D1}\d\tau=:\sum_{j\in\{1,2,3\}}\mathcal{I}_j.
	\end{align*}
	On one hand, since $ u_0 $ is in $ \D1 $, we have
	\begin{align*}
		\mathcal{I}_1\le\alpha^{-1}\mathcal{M}(t_m-t_n)^\alpha\norm{u_0}_{\D1}\le\alpha^{-1}\mathcal{M}\Big((t_m-\mathscr{T}_{\max})^\alpha+(\mathscr{T}_{\max}-t_n)^\alpha\Big)\norm{u_0}_{\D1}.
	\end{align*}
	Then, for any $ \varepsilon>0 $, we can find a number $ n_1 $ such that 
	\begin{align*}
		\mathcal{I}_1\le\frac{\varepsilon}{3},\quad\forall m,n\ge n_1.
	\end{align*}
	On the other hand, the use of Lemma \ref{Linear estimate}\textit{(ii)} shows that
	\begin{align*}
		\mathcal{I}_2&\le C_2\int_{t_n}^{t_m}(t_m-\tau)^{\alpha-1}\bnorm{H(u(\tau))}_{\Doneminus}\d\tau\nonumber\\
		&\le \overline{\mathscr{C}}_1C_2\int_{t_n}^{t_m}(t_m-\tau)^{\alpha-1}\Big(\norm{u(\tau)}^2_{\D1}+\norm{u(\tau)}_{\D1}\Big)\d\tau\\
		&\le\alpha^{-1}\big(\mathscr{M}^2+\mathscr{M}\big)\overline{\mathscr{C}}_1C_2\Big((t_m-\mathscr{T}_{\max})^\alpha+(\mathscr{T}_{\max}-t_n)^\alpha\Big).
	\end{align*}
	Then, we can also find a number $ n_2 $ such that
	\begin{align*}
		\mathcal{I}_2\le\frac{\varepsilon}{3},\quad\forall m,n\ge n_2.
	\end{align*}
	Also, it's obviously that 
\begin{align*}
&\Bnorm{\mathscr{R}(t_m-\tau)H(u(\tau))-\mathscr{R}(t_n-\tau)H(u(\tau))}^2_{\D1}\\
=&\sum_{k=1}^{\infty}\frac{\theta_{k}\mathscr{E}^2(k,t_n,\tau,t_m-t_n)}{( 1+\theta_{k})^{2}}\Binner{H(u(\tau))}{\phi_{k}}^{2}\\
\le& \sum_{k=1}^{\infty}\frac{\theta_{k}\mathscr{E}^2(k,t_n,\tau,t_m-t_n)}{( 1+\theta_{k})^{2}}\Binner{H(u(\tau))}{\phi_{k}}^{2}.
\end{align*}
Then, from \eqref{Inequality for E(k,t,tau,epsilon)}, the following inequality holds 
	\begin{align*}
		&\int_{0}^{t_n}\Bnorm{\mathscr{R}(t_m-\tau)H(u(\tau))-\mathscr{R}(t_n-\tau)H(u(\tau))}_{\D1}\d\tau\nonumber\\
		\le&\frac{\mathcal{M}\overline{\mathscr{C}}_1C_2}{(1-\alpha)}\int_{0}^{t_n}\Big((t_n-\tau)^{\alpha-1}-(t_m-\tau)^{\alpha-1}\Big)\bnorm{H(u(\tau))}_{D^{-1}(\Omega)}\d\tau\\
		\le&\frac{\mathcal{M}\big(\mathscr{M}^2+\mathscr{M}\big)\overline{\mathscr{C}}_1C_2}{\alpha(1-\alpha)}\big|t_m^\alpha-(t_m-t_n)^\alpha-t_n^\alpha\big|\nonumber\\
		\le&\frac{\mathcal{M}\big(\mathscr{M}^2+\mathscr{M}\big)\overline{\mathscr{C}}_1C_2}{\alpha(1-\alpha)}\Big(\big|t_n^\alpha-\mathscr{T}_{\max}^\alpha\big|+\big|t_n-\mathscr{T}_{\max}\big|^\alpha+\big|t_m-\mathscr{T}_{\max}\big|^\alpha +\big|t_m^\alpha-\mathscr{T}_{\max}^\alpha\big|\Big).
	\end{align*}
	The convergence of $ \{t_n\}_{n=1}^\infty $ gives us a large number $ n_3 $ such that 
	\begin{align*}
		\mathcal{I}_3\le\frac{\varepsilon}{3},\quad\forall m,n\ge n_3.
	\end{align*}
	Hence, for every $ \varepsilon>0 $, we can always find a number $ n_0=\max\left\{n_1,n_2,n_3\right\} $ that whenever $ m,n\ge n_0 $, we have
	\begin{align*}
		\norm{u_m-u_n}_{\D1}\le\varepsilon.
	\end{align*}
	Accordingly, we can conclude that $ \{u_n\}_{n=1}^\infty $ is a Cauchy sequence in $ \D1 $. Thanks to the completeness of $ \D1 $, there exists a unique function $ \overline{u} $ which is the limit of $ u_n $ as $ n $ to infinity, i.e.
	\begin{align*}
		\overline{u}=\lim\limits_{n\to\infty}u(x,t_n)=u(x,\mathscr{T}_{\max}).
	\end{align*} 
	Thus we have extended $ u $ over the interval $ [0,\mathscr{T}_{\max}] $. Now, Lemma \ref{ExtensionLemmaSect4} is available for extending $ u $ to some large time interval. This fact leads us to a contradiction. Then, we can finish our proof by making the conclusion that
	\begin{align*}
		\lim\limits_{t\to\mathscr{T}_{\max}^-}\norm{u(t)}_{\D1}=\infty,
	\end{align*} 
provided $ \mathscr{T}_{\max}<\infty. $
\end{proof}
\section{The other Lipschitz cases of the source functions}\label{Local Lipschitzian}
\noindent Throughout this section, we concern in the following form of the initial value problem \eqref{Main Equation}-\eqref{Initial condition}
\begin{align}
\begin{cases}
\dis\partial_{t}^{\alpha}u - \partial_{t}^{\alpha}\Delta u -\Delta u =H\left(u\right),\quad&\text{in}\quad\Omega\times(0, \infty),\\[0.15cm]
\dis u=0,&\text{on}\quad\partial\Omega\times(0, \infty),\\[0.15cm]
\dis u=u_{0},&\text{in}\quad\Omega\times\{0\}.
\end{cases}
\end{align}
In particular, our main results for a mild solution $ u:[0,\infty)\to\Omega $ are going to revolve around the following two hypotheses for the source functions $ H $.

\noindent \textit{$ (H_1) $ The nonlinearity of polynomial type}
\begin{flalign*}
\quad~H(u)=|u|^{p-1}u,\quad p> 2.&&
\end{flalign*}
\noindent \textit{$ (H_2) $ The nonlinearity of exponential type}
\begin{flalign*}
\quad~ H(u)=u^3e^{u^2}. &&
\end{flalign*}
\subsection{The polynomial nonlinearity}
For the hypothesis $ (H_1) $ for the source function, we aim to establish the global-in-time well-posedness result for the mild solution to Problem \eqref{Main Equation}-\eqref{Initial condition}. To this end we first provide some nonlinear control on $ H $.

\begin{lemma}\label{Nonlinear estimate Polynomial}
Let $d\geq 2$ and $p>2$. Then, if
\begin{align*}
0<\nu<\min\left\{1,\frac{d+2-2(p-1)d}{2-4(p-1)}\right\},
\end{align*}
we can find a independent constant $\mathscr{C}_2>0$ such that
\begin{align*}
	\bnorm{H(u)-H(v)}_{D^{\nu-1}(\Omega)}\leq \mathscr{C}_2\left(\norm{u}_{D^{\nu}(\Omega)}^{p-1}+\norm{v}_{D^{\nu}(\Omega)}^{p-1}\right)\norm{u-v}_{D^{\nu}(\Omega)},
\end{align*}
for every $u, v\in D^{\nu}(\Omega)$.
\end{lemma}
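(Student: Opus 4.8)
The plan is to reduce the $D^{\nu-1}(\Omega)$ estimate to an $L^q(\Omega)$ estimate via a dual Sobolev embedding, then bound $H(u)-H(v)=|u|^{p-1}u-|v|^{p-1}v$ pointwise and close with Hölder's inequality and the embedding $D^{\nu}(\Omega)\hookrightarrow L^{r}(\Omega)$. Concretely, I would first record the elementary pointwise inequality
\begin{align*}
\big||u|^{p-1}u-|v|^{p-1}v\big|\le C_p\big(|u|^{p-1}+|v|^{p-1}\big)|u-v|,
\end{align*}
valid for all real $u,v$ since $p>2$ means $s\mapsto|s|^{p-1}s$ is $C^1$. Next, by the fractional Sobolev embedding (Lemma \ref{Fractional Sobolev embeddings}), the inclusion $L^{q}(\Omega)\hookrightarrow D^{\nu-1}(\Omega)$ holds for a suitable $q=q(\nu,d)$ dual to the embedding $D^{1-\nu}(\Omega)\hookrightarrow L^{q'}(\Omega)$; this is what turns the negative-order norm into a Lebesgue norm. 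So it suffices to estimate $\big\|(|u|^{p-1}+|v|^{p-1})|u-v|\big\|_{L^{q}(\Omega)}$.

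Then I would apply Hölder's inequality with exponents chosen so that $|u|^{p-1}$ and $|v|^{p-1}$ land in $L^{r/(p-1)}(\Omega)$ and $u-v$ lands in $L^{r}(\Omega)$, where $r$ is the Sobolev exponent for $D^{\nu}(\Omega)\hookrightarrow L^{r}(\Omega)$, i.e. $r=\tfrac{2d}{d-2\nu}$ when $\nu<d/2$. Matching the Hölder balance $\tfrac{1}{q}=\tfrac{p-1}{r}+\tfrac{1}{r}=\tfrac{p}{r}$ against the value of $q$ forced by the dual embedding gives precisely one linear equation in $\nu$; solving it (together with the constraint $q\ge 1$, equivalently $r\ge p$, and $\nu<1$) produces the stated range
\begin{align*}
0<\nu<\min\left\{1,\frac{d+2-2(p-1)d}{2-4(p-1)}\right\}.
\end{align*}
With the exponents in hand, Hölder gives $\big\|H(u)-H(v)\big\|_{D^{\nu-1}(\Omega)}\le C\big(\|u\|_{L^r}^{p-1}+\|v\|_{L^r}^{p-1}\big)\|u-v\|_{L^r}$, and a final application of $D^{\nu}(\Omega)\hookrightarrow L^{r}(\Omega)$ upgrades every Lebesgue norm to the $D^{\nu}(\Omega)$ norm, yielding the claimed inequality with $\mathscr{C}_2$ depending only on $p$, $d$, $\nu$, and $\Omega$ through the embedding constants.

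The main obstacle is bookkeeping the exponents: one must simultaneously satisfy (i) the dual embedding $L^{q}(\Omega)\hookrightarrow D^{\nu-1}(\Omega)$, which pins $q$ to $\nu$ and $d$; (ii) the Hölder identity $\tfrac1q=\tfrac{p}{r}$; and (iii) the direct embedding $D^{\nu}(\Omega)\hookrightarrow L^{r}(\Omega)$, which pins $r$ to $\nu$ and $d$. Eliminating $q$ and $r$ must reproduce exactly the threshold in the hypothesis, so care is needed to verify that the stated bound on $\nu$ is equivalent to $q\ge 1$ (so that $L^q$ is an honest Banach space and the embedding is available) rather than some slightly different algebraic condition. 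Everything else — the pointwise inequality, Hölder, and the embeddings — is routine once the parameters are correctly aligned; on the bounded domain $\Omega$ one may also freely trade a larger Lebesgue exponent for a smaller one, which is what makes the single inequality $\nu<\tfrac{d+2-2(p-1)d}{2-4(p-1)}$ (rather than an equality) sufficient.
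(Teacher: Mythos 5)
Your outline coincides with the paper's proof in all structural respects: both pass from the $D^{\nu-1}(\Omega)$ norm to a Lebesgue norm via the dual embedding $L^{q}(\Omega)\hookrightarrow D^{\nu-1}(\Omega)$ with $q=\frac{2d}{d+2-2\nu}$, both rest on the pointwise bound $|H(u)-H(v)|\le C(|u|^{p-1}+|v|^{p-1})|u-v|$, and both close with H\"older's inequality followed by $D^{\nu}(\Omega)\hookrightarrow L^{r}(\Omega)$. The only substantive difference is the H\"older split: you balance the exponents so that $\frac{1}{q}=\frac{p}{r}$ with the single Sobolev exponent $r=\frac{2d}{d-2\nu}$, whereas the paper uses the symmetric pair, placing $u-v$ in $L^{2q}(\Omega)$ and $u$ in $L^{2(p-1)q}(\Omega)$; since $2(p-1)>p$ for $p>2$, your split is in fact marginally less demanding on $\nu$.

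However, the step you explicitly deferred --- verifying that the exponent bookkeeping ``reproduces exactly the threshold in the hypothesis'' --- is a genuine gap, and it does not close the way you assert. The constraint $q\ge 1$ is automatic for $d\ge 2$ and $\nu>0$, so it is not the source of the threshold. The binding condition is the direct embedding $D^{\nu}(\Omega)\hookrightarrow L^{2(p-1)q}(\Omega)$ (resp.\ $L^{pq}(\Omega)$ in your split), which by Lemma~\ref{Fractional Sobolev embeddings} reads $2(p-1)q\le\frac{2d}{d-2\nu}$, i.e.\ $2(p-1)(d-2\nu)\le d+2-2\nu$, i.e.
\[
\nu\ \ge\ \frac{d+2-2(p-1)d}{2-4(p-1)}.
\]
This is a \emph{lower} bound on $\nu$ --- the stated threshold with the inequality reversed (your balanced split yields the slightly weaker lower bound $\nu\ge\frac{(p-1)d-2}{2(p-1)}$). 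The upper restriction $\nu\le 1$ arises separately, from needing $\nu-1\le 0$ in the dual embedding. A complete argument must therefore impose $\frac{d+2-2(p-1)d}{2-4(p-1)}\le\nu\le 1$ (a nonempty range only for suitable $d,p$, e.g.\ $d=2$); under the hypothesis as stated, the chain of embeddings you and the paper both invoke is not available. To be fair, the paper's own proof glosses over exactly the same point, but asserting that the computation ``produces the stated range'' without carrying it out conceals this sign issue rather than resolving it.
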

\begin{proof}
Firstly, for any $ q\ge1 $, H\"older's inequality gives us the following estimate
\begin{align*}
\bnorm{|u|^{p-1}(u-v)}_{L^{q}(\Omega)}\leq\norm{u-v}_{L^{2q}(\Omega)}\norm{u}_{L^{2(p-1)q}(\Omega)}^{p-1}.
\end{align*}	
Then, if we choose $ q=\frac{2d}{d-2(\nu-1)}>1 $, Lemma \ref{Fractional Sobolev embeddings} shows that the Lebesgue space $ L^{q}(\Omega) $ embeds into the Hilbert space $ D^{\nu-1}(\Omega) $. Also, by another use of Lemma \ref{Fractional Sobolev embeddings} and the assumptions for $d, p,\nu$ and $ \Omega $, we have	
\begin{align*}
D^{\nu}(\Omega)\hookrightarrow L^{2(p-1)q}(\Omega)\hookrightarrow L^{2q}(\Omega)\hookrightarrow L^{q}(\Omega).
\end{align*}
Combining the above result with the fact $(a+b)^{m}\leq 2^m (a^{m}+b^{m}),~ a, b, m>0$ and the triangle inequality, for any $ u,v\in\Dnu $ we have
\begin{align*}
\bnorm{H(u)-H(v)}_{D^{\nu-1}(\Omega)}\le 2^q C_{\Omega}\left(\norm{u}_{D^{\nu}(\Omega)}^{p-1}+\norm{v}_{D^{\nu}(\Omega)}^{p-1}\right)\norm{u-v}_{D^{\nu}(\Omega)},
\end{align*}
where $ C_{\Omega} $ comes from the embeddings. The proof is completed.
\end{proof}
\begin{definition}
Let $ I=[0,\mathscr{T}] $ and $ \sigma,M>0 $. Then, we define a subset of $ C\left(I\to\Dnu\right) $ as follows
\begin{align*}
\mathbb{X}=\mathbb{X}(\sigma,M,\nu):=\left\{w\in C(I \rightarrow D^{\nu}(\Omega))~\Big|~w(0)=u_0~\text{ and }\sup_{t\in I\setminus \{0\}}t^{\sigma}\norm{w(t)}_{D^{\nu}(\Omega)}< M\right\}.
\end{align*}
\end{definition}
\begin{remark}
Observe that $ \mathbb{X} $ is a Banach space involving the norm
\begin{align*}
\norm{w}_{\mathbb{X}}=\sup_{t\in I\setminus \{0\}} t^{\sigma}\norm{w(t)}_{D^{\nu}(\Omega)}.
\end{align*}
\end{remark}
\begin{theorem}\label{Polynomial main theorem}
Let $\alpha\in(0,1)$ and $p>2$  such that $  \frac{p}{p-1}<\alpha^{-1}$.  For a given datum $u_{0}\in D^{\nu}(\Omega)$ sufficiently small and $I =[0, \mathscr{T}]$. Problem $\eqref{Main Equation}$-$\eqref{Initial condition}$  admits a unique mild solution that belongs to the space $ \mathbb{X}=\mathbb{X}\left(\frac{\alpha}{p-1},2C_1\norm{u_0}_{\Dnu},\nu\right)$.
\end{theorem}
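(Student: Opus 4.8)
The plan is a contraction-mapping (equivalently, Picard iteration) argument carried out in the complete metric space $\mathbb{X}=\mathbb{X}\big(\tfrac{\alpha}{p-1},2C_1\norm{u_0}_{\Dnu},\nu\big)$, with $\nu$ fixed within the admissible range of Lemma \ref{Nonlinear estimate Polynomial}. Writing $\sigma:=\tfrac{\alpha}{p-1}$ and $M:=2C_1\norm{u_0}_{\Dnu}$, define
\begin{align*}
\Phi(w)(t):=\mathscr{S}(t)u_0+\int_{0}^{t}\mathscr{R}(t-\tau)H(w(\tau))\d\tau .
\end{align*}
I would show that, once $\norm{u_0}_{\Dnu}$ is small enough, $\Phi$ maps $\mathbb{X}$ into itself and is a strict contraction; its unique fixed point is then the mild solution, and uniqueness in $\mathbb{X}$ is automatic. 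Note $w_1:=\mathscr{S}(\cdot)u_0\in\mathbb{X}$ already, so the Picard sequence $w_{n+1}=\Phi(w_n)$ stays in $\mathbb{X}$.

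\textit{Self-mapping.} Since $p>2$, the exponent $\mu:=\sigma/\alpha=\tfrac{1}{p-1}$ lies in $(0,1)$, so Lemma \ref{Linear estimate}\textit{(i)} gives $\norm{\mathscr{S}(t)u_0}_{\Dnu}\le C_1t^{-\sigma}\norm{u_0}_{\Dnu}$, whence $t^{\sigma}\norm{\mathscr{S}(t)u_0}_{\Dnu}\le M/2$. For the Duhamel term, Lemma \ref{Linear estimate}\textit{(ii)} with $\nu^{*}=1$ together with Lemma \ref{Nonlinear estimate Polynomial} applied with $v=0$ (recall $H(0)=0$) yields
\begin{align*}
\Bnorm{\int_{0}^{t}\mathscr{R}(t-\tau)H(w(\tau))\d\tau}_{\Dnu}
&\le C_2\mathscr{C}_2\int_{0}^{t}(t-\tau)^{\alpha-1}\norm{w(\tau)}_{\Dnu}^{p}\d\tau\\
&\le C_2\mathscr{C}_2M^{p}\int_{0}^{t}(t-\tau)^{\alpha-1}\tau^{-p\sigma}\d\tau .
\end{align*}
The hypothesis $\tfrac{p}{p-1}<\alpha^{-1}$ is exactly $p\sigma<1$, so the last integral equals $\mathrm{B}(\alpha,1-p\sigma)\,t^{\alpha-p\sigma}$ with $\mathrm{B}$ the (finite) Beta function. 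Multiplying by $t^{\sigma}$ and using the crucial identity $\sigma+(\alpha-p\sigma)=\alpha-(p-1)\sigma=0$, the bound becomes $C_2\mathscr{C}_2\mathrm{B}(\alpha,1-p\sigma)M^{p}$, independent of $t$ and of $\mathscr{T}$. Hence $\norm{\Phi(w)}_{\mathbb{X}}\le\tfrac{M}{2}+C_2\mathscr{C}_2\mathrm{B}(\alpha,1-p\sigma)M^{p}<M$ provided $C_2\mathscr{C}_2\mathrm{B}(\alpha,1-p\sigma)M^{p-1}<\tfrac12$, which (as $p-1>1$) holds whenever $\norm{u_0}_{\Dnu}$ is small enough.

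\textit{Contraction, continuity, conclusion.} For $w_1,w_2\in\mathbb{X}$, subtracting and invoking Lemmas \ref{Linear estimate}\textit{(ii)} and \ref{Nonlinear estimate Polynomial} gives, after the identical Beta-function computation,
\begin{align*}
\norm{\Phi(w_1)-\Phi(w_2)}_{\mathbb{X}}\le 2C_2\mathscr{C}_2\mathrm{B}(\alpha,1-p\sigma)M^{p-1}\norm{w_1-w_2}_{\mathbb{X}},
\end{align*}
a contraction under the same smallness requirement (shrinking the threshold if needed). It remains to check $\Phi(w)\in C(I\to\Dnu)$ with $\Phi(w)(0)=u_0$: continuity of $\mathscr{S}(\cdot)u_0$ on $I$ and the limit $\mathscr{S}(t)u_0\to u_0$ as $t\to0^{+}$ follow as in Lemma \ref{Sect3_continuous} (via Lemma \ref{Derivative of Mittag-Leffler}, dominated convergence and $E_{\alpha}(0)=1$), while continuity of the Duhamel term on $(0,\mathscr{T}]$ comes from the splitting $\int_{0}^{t+\varepsilon}-\int_{0}^{t}=\int_{t}^{t+\varepsilon}+\int_{0}^{t}\big[\mathscr{R}(t+\varepsilon-\tau)-\mathscr{R}(t-\tau)\big]$ with dominated convergence, and its vanishing as $t\to0$ from $\Bnorm{\int_{0}^{t}\mathscr{R}(t-\tau)H(w(\tau))\d\tau}_{\Dnu}\le\alpha^{-1}C_2\mathscr{C}_2\big(\sup_{\tau\in[0,\mathscr{T}]}\norm{w(\tau)}_{\Dnu}\big)^{p}t^{\alpha}$, the supremum being finite by continuity of $w$. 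The Banach fixed point theorem then produces a unique $u\in\mathbb{X}$ with $u=\mathscr{S}(\cdot)u_0+\int_{0}^{\cdot}\mathscr{R}(\cdot-\tau)H(u(\tau))\d\tau$, i.e.\ the unique mild solution of \eqref{Main Equation}-\eqref{Initial condition} in $\mathbb{X}$; since $\mathscr{T}$ is arbitrary and the smallness threshold on $\norm{u_0}_{\Dnu}$ is independent of $\mathscr{T}$, the solution is in fact global in time.

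\textit{Main obstacle.} The heart of the matter is the choice of the temporal weight $\sigma=\tfrac{\alpha}{p-1}$: it must be small enough that $p\sigma<1$ — literally the hypothesis $\tfrac{p}{p-1}<\alpha^{-1}$ — so that the singular kernel $(t-\tau)^{\alpha-1}\tau^{-p\sigma}$ is integrable and produces a Beta function, and it must simultaneously satisfy $\alpha-(p-1)\sigma=0$ so that the resulting power of $t$ cancels the weight and the nonlinear estimate becomes uniform in the interval length $\mathscr{T}$, which is precisely what upgrades a local statement to a global one. The remaining ingredients — the smallness bookkeeping and the $C(I\to\Dnu)$ verification — are routine and parallel the earlier sections.
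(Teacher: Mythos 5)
Your proposal is correct and follows essentially the same route as the paper: Picard iteration (equivalently, a contraction argument) in the weighted space $\mathbb{X}$ with weight $t^{\alpha/(p-1)}$, using Lemma \ref{Linear estimate} for the linear parts, Lemma \ref{Nonlinear estimate Polynomial} for the nonlinearity, and the Beta-function identity arising from $p\sigma<1$ and $\alpha-(p-1)\sigma=0$ to make the bounds uniform in $\mathscr{T}$. The smallness bookkeeping and the continuity verification match the paper's Steps 1--3 as well.
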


\begin{proof}
In the same spirit with Section \ref{GBBM Eq}, we consider again the sequence $\{w_{n}\}_{n=1}^{\infty}$ established by
\begin{align*}
w_{1}(x,t)&:=\mathscr{S}(t)u_0(x) ,\\
w_{n+1}(x,t)&:=\mathscr{S}(t)u_0(x)+\int_{0}^{t}\mathscr{R}(t-\tau)H(w_{n}(x,\tau))\d\tau.
\end{align*}

By analogous argument as Lemma \ref{Sect3_continuous}, we can prove that $\{w_{n}\}_{n=1}^{\infty}$ is a subset of $C (I \rightarrow D^{\nu}(\Omega))$. So, we need only to focus on deriving that $\{w_{n}\}_{n=1}^{\infty}$ is a convergent sequence in $ \mathbb{X}=\mathbb{X}\left(\alpha\mu,2C_1\norm{u_0}_{\Dnu},\nu\right) $, where $\mu=\frac{1}{p-1}$. The Picard iteration process for this proof includes three main steps.\\

\noindent$\bullet$ \textbf{Step 1}. We show that $w_{1}$ is in $\mathbb{X}$.

\noindent Indeed, since $\mu\in(0,1)$ and $u_{0}\in D^{\nu}(\Omega)$, thanks to Lemma \ref{Linear estimate}({\textit{i}}), we obtain
\begin{align*}
\norm{w_{1}(t)}_{D^{\nu}(\Omega)}\leq C_1t^{-\alpha\mu}\norm{u_{0}}_{D^{\nu}(\Omega)},
\end{align*}
which implies
\begin{align*}
t^{\alpha\mu}\norm{w_{1}(t)}_{D^{\nu}(\Omega)}\le C_1\norm{u_{0}}_{D^{\nu}(\Omega)}.
\end{align*}

\noindent$\bullet$ \textbf{Step 2}. Assume that $w_{n}$ belongs to $\mathbb{X}$ for any $n\in \mathbb{N}$, then $w_{n+1}\in \mathbb{X}$.

\noindent Indeed, Lemma \ref{Linear estimate}(\textit{ii}) provides us the following estimate
\begin{align}
\norm{w_{n+1}(t)-w_{1}(t)}_{D^{\nu}(\Omega)}\leq C_2\int_{0}^{t}(t-\tau)^{\alpha-1}\Vert H(w_{n}(\tau))\Vert_{D^{\nu-1}(\Omega)}\d\tau.
\end{align}
Using Lemma \ref{Nonlinear estimate Polynomial} with $v=0$ and the fact that $w_{n}\in \mathbb{X}$, we have
\begin{align*}
\norm{w_{n+1}(t)-w_{1}(t)}_{D^{\nu}(\Omega)}&\leq \mathscr{C}_2C_2\int_{0}^{t}(t-\tau)^{\alpha-1}\norm{w_{n}(\tau)}_{D^{\nu}(\Omega)}^{p}\d\tau\nonumber\\
&\leq \mathscr{C}_2C_2\left(\int_{0}^{t}(t-\tau)^{\alpha-1}\tau^{-p\alpha\mu}\d\tau\right)\norm{w_{n}}_{\mathbb{X}}^{p}\\
&\leq \mathscr{C}_2C_2\left(\int_{0}^{t}(t-\tau)^{\alpha-1}\tau^{-p\alpha\mu}\d\tau\right)\left(2C_1\norm{u_{0}}_{D^{\nu}(\Omega)}\right)^{p}.\nonumber
\end{align*}
Also, from our assumptions, we find that
\begin{align*}
p\alpha\mu<1 \quad\text{and}\quad 1-p\mu+\mu=0, 
\end{align*}
which tells that
\begin{align*}
t^{\alpha\mu}  \int_{0}^{t}(t-\tau)^{\alpha-1}\tau^{-p\alpha\mu}\d\tau=\beta(\alpha, 1-p\alpha\mu),
\end{align*}
where the Beta function is defined in Definition \ref{Gamma-Beta defs}. From the standpoint of this result, we infer
\begin{align}
t^{\alpha\mu}\norm{w_{n+1}(t)}_{D^{\nu}(\Omega)}\leq C_1\norm{u_{0}}_{D^{\nu}(\Omega)}+\mathscr{C}_2C_2\beta(\alpha, 1-p\alpha\mu)\left(2C_1\norm{u_{0}}_{D^{\nu}(\Omega)}\right)^{p}.
\end{align}
Hence, the statement that $w_{n+1}\in \mathbb{X}$ follows from the small  data $u_{0} D^{\nu}(\Omega)$, then Step 2 is completed.

\noindent$\bullet$ \textbf{Step 3}. We show that $\{w_{n}\}_{n=1}^{\infty}$ is a Cauchy sequence.

\noindent Suppose that $w_{n}$ and $ w_{n-1}$ are in $\mathbb{X}$. Our techniques are not too different from Step 2. Indeed, we have
\begin{align}\label{Polynomial Cauchy-Picard sp 1}
&\norm{w_{n+1}(t)-w_{n}(t)}_{D^{\nu}(\Omega)}\nonumber\\
\leq& \mathscr{C}_2C_2\int_{0}^{t}(t-\tau)^{\alpha-1}\left(\norm{w_{n}(\tau)}_{D^{\nu}(\Omega)}^{p-1}+\norm{w_{n-1}(\tau)}_{D^{\nu}(\Omega)}^{p-1}\right)\norm{w_{n}(\tau)-v_{n-1}(\tau)}_{D^{\nu}(\Omega)}\d\tau.
\end{align}
Since $\{w_{n}\}_{n=1}^{\infty}\in\mathbb{X}$, multiplying the both sides of \eqref{Polynomial Cauchy-Picard sp 1} by $t^{\alpha\mu}$, we have
\begin{align}\label{Polynomial Cauchy-Picard sp 2}
&t^{\alpha\mu}  \norm{w_{n+1}(t)-w_{n}(t)}_{D^{\nu}(\Omega)}\nonumber\\
\leq& 2\mathscr{C}_2C_2\left(t^{\alpha\mu}\int_{0}^{t}(t-\tau)^{\alpha-1}\tau^{-p\alpha\mu}\d\tau\right)\left(2C_1\norm{u_{0}}_{D^{\nu}(\Omega)}\right)^{p-1}\norm{w_{n}(t)-w_{n-1}(t)}_{\mathbb{X}}\nonumber\\
\le&2\mathscr{C}_2C_2\beta(\alpha, 1-p\alpha\mu)\left(2C_1\norm{u_{0}}_{D^{\nu}(\Omega)}\right)^{p-1}\norm{w_{n}(t)-w_{n-1}(t)}_{\mathbb{X}}.
\end{align}
Then, we can take the supremum over the interval $ I\setminus\{0\} $ on both sides of \eqref{Polynomial Cauchy-Picard sp 2} to obtain
\begin{align*}
\norm{w_{n+1}(t)-w_{n}(t)}_{\mathbb{X}}\le2\mathscr{C}_2C_2\beta(\alpha, 1-p\alpha\mu)\left(2C_1\norm{u_{0}}_{D^{\nu}(\Omega)}\right)^{p-1}\norm{w_{n}(t)-w_{n-1}(t)}_{\mathbb{X}}.
\end{align*} 
Therefore, if $ \norm{u_0}_{\Dnu} $ is sufficiently small, $ \{w_{n}\}_{n=1}^{\infty}$ is a Cauchy sequence in $\mathbb{X} $.

From the above three steps, we can use again the arguments performed as in the proofs of Section \ref{GBBM Eq} to find the limit function $ u $ of $ \{w_n\}_{n=1}^\infty $ which is the unique mild solution of Problem \eqref{Main Equation}-\eqref{Initial condition}. The proof is completed.
\end{proof}
\subsection{Exponential nonlinearity}
This subsection concerns the global well-posedness of Problem \eqref{Main Equation}-\eqref{Initial condition} with the source function $ H(u)=u^3e^{u^2} $. Here for any $ w,v\in\R $, we have
\begin{align*}
|H(w)-H(v)|\le\mathscr{C}_3\left|w-v\right|\left(w^2e^{w^2}+v^2e^{v^2}\right),
\end{align*} 
for some indepent $ \mathscr{C}_3>0. $ We recall the following theorem  in \cite[Theorem 2]{Trudinger} or \cite[Section 1]{Martinazzi} which is used to control the solution operator in the framework of Orlicz space.

\begin{theorem}\label{Embedding into Orlicz} Let $ \Omega\subset\R^2 $ be a cone domain. Then, $ W_0^{1,2}(\Omega) $ embeds continuously into the Orlicz space $ \Lxi $.
\end{theorem}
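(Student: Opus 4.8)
The plan is to reduce this Orlicz embedding to the classical exponential–integrability (Moser--Trudinger) inequality and then carry out the normalization forced by the Luxemburg norm. The starting point I would invoke is the following: since $\Omega\subset\R^2$ is bounded and satisfies a cone condition, there exist constants $\beta>0$ and $C_0=C_0(\Omega)\ge1$ — this is the content of \cite[Theorem 2]{Trudinger} and \cite[Section 1]{Martinazzi} — such that
\[
\int_{\Omega}\big(e^{\beta v^2}-1\big)\,{\d}x\le C_0\qquad\text{whenever }v\in W_0^{1,2}(\Omega)\text{ and }\norm{\nabla v}_{\L2}\le1 .
\]
I would also use the Poincar\'e inequality on the bounded set $\Omega$, so that $\norm{\nabla\,\cdot\,}_{\L2}$ is a norm on $W_0^{1,2}(\Omega)$ equivalent to the full one; it then suffices to estimate $\norm{w}_{\Lxi}$ by a multiple of $\norm{\nabla w}_{\L2}$.

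The next step is to upgrade the a priori bound $C_0$ to the value $1$ appearing in the definition of $\norm{\cdot}_{\Lxi}$. For $0<\gamma\le\beta$ and any real $s$, comparing the two Taylor series gives the pointwise inequality
\[
e^{\gamma s^2}-1=\sum_{k=1}^{\infty}\Big(\frac{\gamma}{\beta}\Big)^{\!k}\frac{\beta^k s^{2k}}{k!}\le\frac{\gamma}{\beta}\sum_{k=1}^{\infty}\frac{\beta^k s^{2k}}{k!}=\frac{\gamma}{\beta}\big(e^{\beta s^2}-1\big),
\]
since $(\gamma/\beta)^k\le\gamma/\beta$ for every $k\ge1$. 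Taking $s=v(x)$, integrating, and choosing $\gamma:=\beta/C_0\le\beta$ yields $\int_{\Omega}(e^{\gamma v^2}-1)\,{\d}x\le1$ for all $v\in W_0^{1,2}(\Omega)$ with $\norm{\nabla v}_{\L2}\le1$.

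Finally, for an arbitrary $w\in W_0^{1,2}(\Omega)$ with $w\neq0$, I would set $v:=w/\norm{\nabla w}_{\L2}$, so that $\norm{\nabla v}_{\L2}=1$, and put $\kappa:=\gamma^{-1/2}\norm{\nabla w}_{\L2}$. Then $(|w(x)|/\kappa)^2=\gamma\,v(x)^2$ for a.e.\ $x$, hence $\int_{\Omega}\Xi\big(|w(x)|\kappa^{-1}\big)\,{\d}x=\int_{\Omega}(e^{\gamma v^2}-1)\,{\d}x\le1$, which in particular shows $w\in\Lxi$ and, by the definition of the Luxemburg norm, $\norm{w}_{\Lxi}\le\kappa=\gamma^{-1/2}\norm{\nabla w}_{\L2}$; with Poincar\'e this is the asserted continuous embedding (the case $w=0$ being trivial). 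I expect the Moser--Trudinger inequality to be the only genuinely non-elementary ingredient, everything else being bookkeeping; an alternative, more self-contained route expands $e^{(|w|/\kappa)^2}-1=\sum_{k\ge1}(k!)^{-1}\kappa^{-2k}\norm{w}_{L^{2k}(\Omega)}^{2k}$ and relies on the two-dimensional Gagliardo--Nirenberg bound $\norm{w}_{L^{2k}(\Omega)}\le C\sqrt{k}\,\norm{w}_{W_0^{1,2}(\Omega)}$, but proving the crucial $\sqrt{k}$ growth of these constants is itself essentially the Moser--Trudinger phenomenon.
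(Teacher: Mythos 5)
Your proposal is correct. Note that the paper does not actually prove this statement: it simply recalls it from \cite[Theorem 2]{Trudinger} and \cite[Section 1]{Martinazzi}, so there is no in-paper argument to compare against. What you supply is the standard (and sound) derivation of the Luxemburg-norm bound from the exponential-integrability form of Trudinger's inequality: the comparison $e^{\gamma s^2}-1\le\frac{\gamma}{\beta}\bigl(e^{\beta s^2}-1\bigr)$ for $0<\gamma\le\beta$ (valid termwise since $(\gamma/\beta)^k\le\gamma/\beta$ for $k\ge1$) lets you renormalize the constant $C_0$ down to $1$, and the scaling $\kappa=\gamma^{-1/2}\norm{\nabla w}_{\L2}$ then reads off $\norm{w}_{\Lxi}\le\gamma^{-1/2}\norm{\nabla w}_{\L2}$ directly from the definition of the Luxemburg norm; Poincar\'e on the bounded domain upgrades this to the full $W_0^{1,2}$ norm. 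The only non-elementary input is the Moser--Trudinger bound itself, which is exactly what the cited references provide, so your argument is a legitimate self-contained bridge from the literature to the precise form of the embedding the paper uses.
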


\begin{lemma}\label{Exp_Contraction_estimates}
Let $ \alpha\in\left(0,\frac{2}{3}\right), $ $ \Omega\subset\R^2 $ be a bounded domain with sufficiently smooth boundary, and $ M $ be a finite constant. Suppose that $ I=[0,\mathscr{T}] $ and $ w,v $ are functions in $ C\left(I\to\Lxi\right) $ such that
\begin{align*}
\max\left(\sup_{t\in I}\norm{w(t)}_{\Lxi},\sup_{t\in I}\norm{t^{\frac{\alpha}{2}}w(t)}_{\Lxi},\sup_{t\in I}\norm{w(t)}_{\Lxi},\sup_{t\in I}\norm{t^{\frac{\alpha}{2}}v(t)}_{\Lxi}\right)<M.
\end{align*}
If $ M $ is small enough, we can derive the following inequalities\\
\begin{align}\label{Inequality 1}
&\norm{\int_{0}^{t}\mathscr{R}(t-\tau)\big(H(w(\tau))-H(v(\tau))\big)\d\tau}_{\Lxi}\nonumber\\
\le&\frac{2\mathscr{C}_3\mathscr{T}^\alpha}{\alpha}\norm{w-v}_{\Lxi}M^2\left(\left(\Gamma\left(3\right)\right)^{\frac{1}{2}}+\left(\Gamma\left(4\right)\right)^{\frac{1}{2}}\left(6M^2\right)^{\frac{1}{6}}\right)
\end{align}
and
\begin{align}\label{Inequality 2}
&t^{\frac{\alpha}{2}}\norm{\int_{0}^{t}\mathscr{R}(t-\tau)\big(H(w(\tau))-H(v(\tau))\big)\d\tau}_{\Lxi}\nonumber\\
\le&\frac{2\mathscr{C}_3\norm{w-v}_{\Lxi}}{\big(\beta\left(\alpha,1-\frac{3\alpha}{2} \big)\right)^{-1}}M^2\left(\left(\Gamma\left(3\right)\right)^{\frac{1}{2}}+\left(\Gamma\left(4\right)\right)^{\frac{1}{2}}\left(6M^2\right)^{\frac{1}{6}}\right).
\end{align}
\end{lemma}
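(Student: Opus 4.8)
The plan is to estimate the singular convolution $\int_0^t \mathscr{R}(t-\tau)\big(H(w(\tau))-H(v(\tau))\big)\,\mathrm{d}\tau$ in the Orlicz norm $\norm{\cdot}_{\Lxi}$ by combining three ingredients: the linear estimate of Lemma \ref{Linear estimate}(ii) applied in the scale $D^{\nu-\nu^*}(\Omega)\to D^\nu(\Omega)$, the continuous embedding $W_0^{1,2}(\Omega)\hookrightarrow\Lxi$ from Theorem \ref{Embedding into Orlicz} (here $d=2$, so $D^1(\Omega)=W_0^{1,2}(\Omega)$), and the comparison $\norm{\cdot}_{L^p(\Omega)}\le(\Gamma(p/2+1))^{1/p}\norm{\cdot}_{\Lxi}$ from Lemma \ref{Orlicz embeds into Lp}. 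First I would pull the $\Lxi$-norm of the time integral inside via Minkowski's integral inequality, then bound $\norm{\mathscr{R}(t-\tau)g(\tau)}_{\Lxi}$ by $C\,\norm{\mathscr{R}(t-\tau)g(\tau)}_{D^1(\Omega)}$ using the embedding, and finally invoke Lemma \ref{Linear estimate}(ii) with an appropriate choice of $\nu^*$ so that the source $H(w)-H(v)$ is measured in a negative-order space (most naturally $\nu=1$, $\nu^*$ chosen so that $L^q(\Omega)\hookrightarrow D^{1-\nu^*}(\Omega)$ for a suitable $q$), giving a factor $(t-\tau)^{\alpha-1}$ times $\norm{H(w(\tau))-H(v(\tau))}$ in that space.

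The heart of the argument is the pointwise nonlinear bound already recorded just before the lemma, namely $|H(w)-H(v)|\le\mathscr{C}_3\,|w-v|\,(w^2e^{w^2}+v^2e^{v^2})$. I would take the relevant $L^q$ (or $D^{1-\nu^*}$) norm of the right-hand side and split it by Hölder's inequality into a factor controlling $\norm{w-v}$ and a factor controlling $w^2e^{w^2}$ (and the symmetric $v$ term). The quantity $w^2e^{w^2}$ is handled by expanding $e^{w^2}=\sum_{j\ge0} w^{2j}/j!$, so that $\norm{w^2e^{w^2}}$ becomes a series $\sum_j \norm{w}_{L^{2j+2}}^{2j+2}/(j!\,\cdot)$; feeding in Lemma \ref{Orlicz embeds into Lp} converts each term into $(\Gamma(j+2))^{?}\norm{w}_{\Lxi}^{2j+2}/j!$, and the combinatorial factors $\Gamma(j+2)/j!=j+1$ make the series converge as a geometric-type series precisely when $\norm{w}_{\Lxi}$ is below a fixed threshold — this is where the smallness of $M$ enters. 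Carrying this out carefully should produce the two explicit leading terms $(\Gamma(3))^{1/2}$ and $(\Gamma(4))^{1/2}(6M^2)^{1/6}$ appearing in the statement (the first two terms $j=0,1$ of the expansion, with the tail absorbed), while the weighted-in-time versions use the split $\norm{t^{\alpha/2}w}_{\Lxi}$ to handle the singularity of $w$ near $\tau=0$.

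Once the nonlinear term is bounded by $\mathscr{C}_3\,M^2\big((\Gamma(3))^{1/2}+(\Gamma(4))^{1/2}(6M^2)^{1/6}\big)\norm{w-v}_{\Lxi}$ (uniformly in $\tau$, using the hypothesis on $w,v$), the two claimed inequalities follow from evaluating the remaining time integral. For \eqref{Inequality 1} the integral is $\int_0^t(t-\tau)^{\alpha-1}\,\mathrm{d}\tau=t^\alpha/\alpha\le\mathscr{T}^\alpha/\alpha$. For \eqref{Inequality 2}, after using the weighted bound on the source which carries a factor $\tau^{-\alpha/2}$ (from dividing and multiplying by $\tau^{\alpha/2}$ twice — once for each of $w,v$ — effectively $\tau^{-\alpha}$; more precisely the cubic nonlinearity in the $w^2$ part forces an exponent $\tau^{-3\alpha/2}$), one recognizes $t^{\alpha/2}\int_0^t(t-\tau)^{\alpha-1}\tau^{-3\alpha/2}\,\mathrm{d}\tau=\beta(\alpha,1-\tfrac{3\alpha}{2})$, which is finite exactly because $\alpha<\tfrac23$ guarantees $1-\tfrac{3\alpha}{2}>0$. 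I expect the main obstacle to be the bookkeeping in the series expansion of $e^{w^2}$: one must track the $\Gamma$-factors from Lemma \ref{Orlicz embeds into Lp} against the factorials from the exponential series and verify the resulting series is dominated by its first two terms plus a geometric tail controlled by $M$, so that the final constant takes precisely the stated closed form rather than an opaque convergent sum.
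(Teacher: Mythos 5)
Your overall architecture matches the paper's: embed the Orlicz norm into $D^{1}(\Omega)$ via Theorem \ref{Embedding into Orlicz}, apply Lemma \ref{Linear estimate}(ii) to pull out $(t-\tau)^{\alpha-1}$, use the pointwise bound $|H(w)-H(v)|\le\mathscr{C}_3|w-v|(w^2e^{w^2}+v^2e^{v^2})$ with H\"older, convert Lebesgue norms to $\Lxi$ norms via Lemma \ref{Orlicz embeds into Lp}, and finish with $\int_0^t(t-\tau)^{\alpha-1}\,\d\tau=t^\alpha/\alpha$ for \eqref{Inequality 1} and the Beta-function identity $t^{\alpha/2}\int_0^t(t-\tau)^{\alpha-1}\tau^{-3\alpha/2}\,\d\tau=\beta(\alpha,1-\tfrac{3\alpha}{2})$ for \eqref{Inequality 2} (your reading of where $\tau^{-3\alpha/2}$ comes from — one weighted factor for each of the three copies of the solution in the cubic nonlinearity — and of why $\alpha<\tfrac23$ is needed are both correct).

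The gap is at the one step you flag as the ``main obstacle'': controlling the exponential factor. The paper does not expand $e^{u^2}$ in a power series. It splits $u^2e^{u^2}=u^2+u^2(e^{u^2}-1)$, applies H\"older to isolate $\norm{e^{u^2}-1}_{L^6(\Omega)}$, and bounds this factor in closed form by the Ioku-type device: $(e^a-1)^6\le e^{6a}-1$ gives $\norm{e^{u^2}-1}^6_{L^6(\Omega)}\le\int_\Omega(e^{6u^2}-1)\,\d x$, and then, provided $6\norm{u}^2_{\Lxi}\le 1$, the convexity inequality $e^{\lambda z}-1\le\lambda(e^z-1)$ for $\lambda\in[0,1]$ applied with $z=u^2/\norm{u}^2_{\Lxi}$ together with $\int_\Omega\Xi(|u|/\norm{u}_{\Lxi})\,\d x\le1$ yields $\int_\Omega(e^{6u^2}-1)\,\d x\le 6\norm{u}^2_{\Lxi}$. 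This is exactly where the smallness of $M$ enters and is the only route to the stated factor $(\Gamma(4))^{1/2}(6M^2)^{1/6}$ (the $\Gamma$ powers come from three $L^6$ norms and two $L^4$ norms via Lemma \ref{Orlicz embeds into Lp}, not from a series). Your series alternative can be salvaged, but not as written: you cannot take $\norm{w}^{2j+2}_{L^{2j+2}}$ of the exponential term in isolation — the H\"older split against $|w-v|$ forces norms like $\norm{u}^{2j+2}_{L^{q(2j+2)}}$ with $q>1$, whose Orlicz comparison constants grow like $(\Gamma(qj+\cdot))^{1/q}/j!\sim C^{\,j}$ by Stirling, not like $\Gamma(j+2)/j!=j+1$. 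The series still converges for $M$ below a threshold, so you would obtain a qualitatively equivalent contraction estimate, but with an opaque convergent-sum constant rather than the precise right-hand sides of \eqref{Inequality 1}--\eqref{Inequality 2}; as a proof of the lemma as stated, the key identity is missing.
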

\begin{proof}
Assume that $ w$ and $v $ are functions in $ \Lxi $. Then, we can use Theorem \ref{Embedding into Orlicz} to obtain 
\begin{align*}
\norm{\int_{0}^{t}\mathscr{R}(t-\tau)\left(H(w(\tau))-H(v(\tau))\right)\d\tau}_{\Lxi}\le\int_{0}^{t}\Bnorm{\mathscr{R}(t-\tau)\left(H(w(\tau))-H(v(\tau))\right)}_{\D1}\d\tau.
\end{align*}
Using Lemma \ref{Linear estimate}, we have
\begin{align}\label{Contraction estimate1 sect5}
\norm{\int_{0}^{t}\mathscr{R}(t-\tau)\left(H(w(\tau))-H(v(\tau))\right)\d\tau}_{\Lxi}\le\int_{0}^{t}(t-\tau)^{\alpha-1}\Bnorm{H(w(\tau))-H(v(\tau))}_{\L2}\d\tau.
\end{align}
To continuous, we make a nonlinear estimate for the source function with $ w=w(t),v=v(t),~t\in I $, as follows
\begin{align}\label{Exp nonlinear estimate sp1}
&\norm{H(w)-H(v)}_{\L2}\\
\le&\mathscr{C}_3\norm{(w-v)(u^2+v^2)}_{\L2}+\mathscr{C}_3\sum_{u\in\{w,v\}}\norm{\left|w-v\right||u|^2\left(e^{u^2}-1\right)}_{\L2}\nonumber\\
\le&\mathscr{C}_3\sum_{u\in\{w,v\}}\left(\norm{w-v}_{L^4(\Omega)}\norm{u}^2_{L^4(\Omega)}+\norm{w-v}_{L^6(\Omega)}\norm{u}^2_{L^6(\Omega)}\norm{e^{u^2}-1}_{L^6(\Omega)}\right),\nonumber
\end{align}
here, we have used H\"older's inequality. If $ \norm{u}_{\Lxi}<(\frac{1}{6})^{\frac{1}{2}} $,    it follows from the techniques used in \cite[Lemma 3.2]{Ioku} that
\begin{align}\label{Exp nonlinear estimate sp2}
\norm{e^{u^2}-1}^6_{L^6(\Omega)}&\le\int_{\Omega}\left(e^{6u^2(x)}-1\right){\d} x\le\int_{\Omega}\left(\exp\left({\frac{6\norm{u}^2_{\Lxi}u^2(x)}{\norm{u}^2_{\Lxi}}}\right)-1\right){\d} x\le6\norm{u}^2_{\Lxi},
\end{align}
where we have used the fact that 
\begin{align*}
\left\{\kappa\in\R~\Big|~\kappa>0,\int_{\Omega}\Xi\left(\frac{|w(x)|}{\kappa}\right)\d x\le1\right\}=\left[\norm{w}_{\Lxi},\infty\right).
\end{align*}
According to \eqref{Exp nonlinear estimate sp1}, \eqref{Exp nonlinear estimate sp2} and Lemma \ref{Orlicz embeds into Lp}, we find that
\begin{align}\label{Nonlinear estimate exponential}
\norm{H(w)-H(v)}_{\L2}&\le\mathscr{C}_3\sum_{u\in\{w,v\}}\norm{w-v}_{\Lxi}\norm{u}^2_{\Lxi}\left(\left(\Gamma\left(3\right)\right)^{\frac{1}{2}}+\left(\Gamma\left(4\right)\right)^{\frac{1}{2}}\left(6\norm{u}^2_{\Lxi}\right)^{\frac{1}{6}}\right).
\end{align}
Inclusion inequalities \eqref{Contraction estimate1 sect5} and \eqref{Nonlinear estimate exponential} allow us to deduce the inequality \eqref{Inequality 1}. Furthermore, by using \eqref{Inequality 1} and
\begin{align*}
	t^{\frac{\alpha}{2}}  \int_{0}^{t}(t-\tau)^{\alpha-1}\tau^{-\frac{3\alpha}{2}}\d\tau=\beta\left(\alpha, 1-\frac{3\alpha}{2}\right),
\end{align*}
we get the result \eqref{Inequality 2}.
\end{proof}
\begin{theorem}
Let $ \Omega $ be a bounded domain of $ \R^2 $ with sufficiently smooth boundary $ \partial\Omega $ and $ H(u)=u^3e^{u^2} $. Assume that $ I=[0,\mathscr{T}] $ and $ u_0\in\D1 $ is small enough. Then, there exists a unique mild solution of Problem \eqref{Main Equation}-\eqref{Initial condition} in $ C\left(I\to\Lxi\right). $
\end{theorem}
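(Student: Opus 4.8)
The plan is to repeat, in the Orlicz setting, the Picard scheme already used for the polynomial source in Theorem \ref{Polynomial main theorem}, the new ingredients being the continuous embedding $\D1\hookrightarrow\Lxi$ from Theorem \ref{Embedding into Orlicz} and the two contraction estimates of Lemma \ref{Exp_Contraction_estimates}. Concretely, I would fix a small radius $M>0$ (to be adjusted together with the size of $u_0$), introduce the space
\[
\mathbb{W}(M):=\Big\{w\in C(I\to\Lxi)~\Big|~w(0)=u_0,\ \max\Big(\sup_{t\in I}\norm{w(t)}_{\Lxi},\ \sup_{t\in I}\norm{t^{\alpha/2}w(t)}_{\Lxi}\Big)\le M\Big\},
\]
which is complete for the distance induced by the norm $\norm{w}_{\mathbb{W}}:=\max\big(\sup_{t\in I}\norm{w(t)}_{\Lxi},\ \sup_{t\in I}\norm{t^{\alpha/2}w(t)}_{\Lxi}\big)$, and work with the iterates $w_1=\mathscr{S}(\cdot)u_0$, $w_{n+1}=\Phi(w_n)$, where $\Phi(w)(t):=\mathscr{S}(t)u_0+\int_0^t\mathscr{R}(t-\tau)H(w(\tau))\d\tau$. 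Note that $\Phi(w)(0)=\mathscr{S}(0)u_0=u_0$ since $E_\alpha(0)=1$.

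First I would check that $w_1\in\mathbb{W}(M)$ when $\norm{u_0}_{\D1}$ is small: by Theorem \ref{Embedding into Orlicz} and Lemma \ref{Linear estimate}\textit{(i)} with $\mu=0$, $\norm{\mathscr{S}(t)u_0}_{\Lxi}\le C\norm{\mathscr{S}(t)u_0}_{\D1}\le CC_1\norm{u_0}_{\D1}$, while the same lemma with $\mu=\tfrac12$ yields $t^{\alpha/2}\norm{\mathscr{S}(t)u_0}_{\Lxi}\le CC_1\norm{u_0}_{\D1}$, so $\norm{w_1}_{\mathbb{W}}\le CC_1\norm{u_0}_{\D1}$. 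For the self-mapping property I would apply Lemma \ref{Exp_Contraction_estimates} with $v\equiv0$ — legitimate since $H(0)=0$ and the derivation of \eqref{Nonlinear estimate exponential} is unchanged — to obtain, for $w\in\mathbb{W}(M)$, a bound of the form $\norm{\Phi(w)-w_1}_{\mathbb{W}}\le K(\mathscr{T},\alpha)\,M^{3}\big(1+M^{1/3}\big)$, hence $\norm{\Phi(w)}_{\mathbb{W}}\le CC_1\norm{u_0}_{\D1}+K(\mathscr{T},\alpha)M^{3}(1+M^{1/3})$, which is $\le M$ once $M$ is small and $\norm{u_0}_{\D1}\le M/(2CC_1)$. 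For the contraction, inequalities \eqref{Inequality 1} and \eqref{Inequality 2} of Lemma \ref{Exp_Contraction_estimates} give directly $\norm{\Phi(w)-\Phi(v)}_{\mathbb{W}}\le K(\mathscr{T},\alpha)\,M^{2}\big(1+M^{1/3}\big)\norm{w-v}_{\mathbb{W}}$, and here the hypothesis $\alpha\in(0,\tfrac23)$ is exactly what makes the Beta factor $\beta(\alpha,1-\tfrac{3\alpha}{2})$ in \eqref{Inequality 2} finite; shrinking $M$ further makes this Lipschitz constant $<1$. The Banach fixed point theorem (equivalently, a geometric-series argument as in \eqref{Cauchy-Picard argument}) then produces the unique $u\in\mathbb{W}(M)$ with $u=\Phi(u)$; passing to the limit under the Duhamel integral by dominated convergence, using $\norm{H(w_n(\tau))-H(u(\tau))}_{\L2}\to0$ from \eqref{Nonlinear estimate exponential}, confirms $u$ is a mild solution, and uniqueness in $C(I\to\Lxi)$ follows because any two mild solutions with datum $u_0$ obey the same contraction inequality.

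The step I expect to be the main obstacle is verifying that the iterates are genuinely continuous into $\Lxi$, especially at $t=0$: there I would mimic Lemma \ref{Sect3_continuous}, bounding $\norm{\mathscr{S}(t+\varepsilon)u_0-\mathscr{S}(t)u_0}_{\D1}$ via Lemmas \ref{Bound of Mittag-Leffler} and \ref{Derivative of Mittag-Leffler}, and the Duhamel remainder via Lemma \ref{Linear estimate}\textit{(ii)} together with the nonlinear bound \eqref{Nonlinear estimate exponential}, and then transfer continuity to $\Lxi$ through Theorem \ref{Embedding into Orlicz}. The other delicate point is the bookkeeping of the two simultaneous norms (the plain sup and the $t^{\alpha/2}$-weighted one): since \eqref{Nonlinear estimate exponential} costs three powers of the Luxemburg norm, one must make sure that the weighted singular integral $t^{\alpha/2}\int_0^t(t-\tau)^{\alpha-1}\tau^{-3\alpha/2}\,\d\tau=\beta(\alpha,1-\tfrac{3\alpha}{2})$ converges — again precisely the constraint $\alpha<\tfrac23$ — and that both norms of $w_n$ feed consistently into both inequalities of Lemma \ref{Exp_Contraction_estimates}. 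Everything else is a routine repetition of the Picard argument of Sections \ref{GBBM Eq} and \ref{Local Lipschitzian}.
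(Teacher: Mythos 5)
Your proposal follows essentially the same route as the paper: the same Picard iterates, the same two-component norm (the plain supremum together with the $t^{\alpha/2}$-weighted one) on a ball of $C(I\to\Lxi)$, the embedding of Theorem \ref{Embedding into Orlicz} combined with Lemma \ref{Linear estimate} (with $\mu=0$ and $\mu=\tfrac12$) for the linear part, Lemma \ref{Exp_Contraction_estimates} with $v\equiv0$ for the invariance step and \eqref{Inequality 1}--\eqref{Inequality 2} for the contraction, and continuity of the iterates delegated to the argument of Lemma \ref{Sect3_continuous}. The one substantive divergence is in how the Duhamel term is controlled over the whole interval: you invoke \eqref{Inequality 1} globally, so your invariance and contraction constants carry the factor $\mathscr{T}^{\alpha}/\alpha$, and the required smallness of $M$ (hence of $\norm{u_0}_{\D1}$) degenerates as $\mathscr{T}$ grows. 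The paper instead splits the time integral at thresholds $\mathscr{T}_*,\mathscr{T}_{**}$ determined only by $\norm{u_0}_{\D1}$ and $\alpha$, applying \eqref{Inequality 1} on $[0,\mathscr{T}_*]$ and the weighted estimate \eqref{Inequality 2} (whose Beta constant is independent of $\mathscr{T}$) on $[\mathscr{T}_*,t]$; this makes the smallness threshold uniform in $\mathscr{T}$, which is what justifies the ``global'' character of the result. Since the theorem fixes $I=[0,\mathscr{T}]$ before imposing smallness, your version does prove the literal statement, but with a weaker, $\mathscr{T}$-dependent smallness condition; to recover the paper's conclusion you would need to add the time-splitting device.
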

\begin{proof}
We start by considering again the sequence $\{w_{n}\}_{n=1}^{\infty}$ as in Theorem \ref{Polynomial main theorem} and the following space
\begin{align*}
    \mathbb{U}:=\left\{w\in C\left(I\to\Lxi\right)~\Big|~\norm{w}_{    \mathbb{U}}<2C_1\norm{u_0}_{\D1}\right\},
\end{align*}
where, the $ \mathbb{U} $-norm is given by
\begin{align*}
\norm{w}_{    \mathbb{U}}:=\max\left\{\sup_{t\in I}\norm{w}_{\Lxi},\sup_{t\in I}t^{\frac{\alpha}{2}}\norm{w}_{\Lxi}\right\}.
\end{align*}
Using Theorem \ref{Embedding into Orlicz} and considering Lemma \ref{Linear estimate} in two cases: $ \mu=0 $ and $ \mu=\frac{1}{2} $, we have
\begin{align*}
\begin{cases}
\dis\norm{w_1(t)}_{\Lxi}\le\norm{\mathscr{S}(t)u_0}_{\D1}\le C_1\norm{u_0}_{\D1},\qquad&\mu=0,~t\in I\\[0.2cm]
\dis t^{\frac{\alpha}{2}}\norm{w_1(t)}_{\Lxi}\le t^{\frac{\alpha}{2}}\norm{\mathscr{S}(t)u_0}_{\D1}\le C_1\norm{u_0}_{\D1},&\mu=\frac{1}{2},~t\in I.
\end{cases}
\end{align*}
This result yields that $ w_1\in  \mathbb{U}. $ Next we assume that $ w_n $ is in $   \mathbb{U} $, then we can show that $ w_{n+1} $ is also in $ \mathbb{U} $ In fact, for given small data $ u_0 $, set
\begin{align*}
\mathscr{T}_*:=\left(\frac{32\mathscr{C}_3\norm{u_0}_{\D1}^2\left(\left(\Gamma\left(3\right)\right)^{\frac{1}{2}}+\left(\Gamma\left(4\right)\right)^{\frac{1}{2}}\left(6(2\norm{u_0}_{\D1})^2\right)^{\frac{1}{6}}\right)}{\alpha C_1}\right)^{\frac{-1}{\alpha}}.
\end{align*}
On the one hand, when $t\le\mathscr{T}_*$, we apply Lemma \ref{Exp_Contraction_estimates} with $ v\equiv0 $ to find that
\begin{align*}
\norm{\int_{0}^{t}\mathscr{R}(t-\tau)H(w_{n}(\tau))\d\tau}_{\Lxi}&\le\frac{2\mathscr{C}_3\mathscr{T}_*^\alpha}{\alpha(2\norm{u_0}_{\D1})^{-3}}\left(\left(\Gamma\left(3\right)\right)^{\frac{1}{2}}+\left(\Gamma\left(4\right)\right)^{\frac{1}{2}}\left(6(2\norm{u_0}_{\D1})^2\right)^{\frac{1}{6}}\right)\nonumber\\
&=2^{-1}C_1\norm{u_0}_{\D1}.
\end{align*}
On the other hand, if $ t$ is larger than $ \mathscr{T}_* $ defined above, we can deduce that
\begin{align*}
&\norm{\int_{\mathscr{T}_*}^{t}\mathscr{R}(t-\tau)H(w_{n}(\tau))\d\tau}_{\Lxi}\\
\le&\mathscr{T}_*^{-\frac{\alpha}{2}}t^{\frac{\alpha}{2}}\norm{\int_{0}^{t}\mathscr{R}(t-\tau)H(w_{n}(\tau))\d\tau}_{\Lxi}\nonumber\\
\le&\frac{2\mathscr{C}_3\mathscr{T}_*^{-\frac{\alpha}{2}}(2\norm{u_0}_{\D1})^3\left(\left(\Gamma\left(3\right)\right)^{\frac{1}{2}}+\left(\Gamma\left(4\right)\right)^{\frac{1}{2}}\left(6(2\norm{u_0}_{\D1})^2\right)^{\frac{1}{6}}\right)}{\left(\beta\left(\alpha,1-\frac{3\alpha}{2} \right)\right)^{-1}}\nonumber\\
=&\frac{128\norm{u_0}_{\D1}^4\left(\mathscr{C}_3\left(\left(\Gamma\left(3\right)\right)^{\frac{1}{2}}+\left(\Gamma\left(4\right)\right)^{\frac{1}{2}}\left(6(2\norm{u_0}_{\D1})^2\right)^{\frac{1}{6}}\right)\right)^{\frac{3}{2}}}{\left(\beta\left(\alpha,1-\frac{3\alpha}{2} \right)\right)^{-1}}.
\end{align*}
Hence, we can choose $ \norm{u_0}_{\D1} $ small enough to obtain
\begin{align}\label{Sect5_T* to t}
\norm{\int_{\mathscr{T}_*}^{t}\mathscr{R}(t-\tau)H(w_{n}(\tau))\d\tau}_{\Lxi}\le\frac{C_1\norm{u_0}_{\D1}}{2}.
\end{align}
Combining all of the above arguments, whether $ t\le\mathscr{T}_* $ or $ t>\mathscr{T}_*, $ we can see that
\begin{align}\label{Sect5_Invariant_sp_1}
\norm{\int_{0}^{t}\mathscr{R}(t-\tau)H(w_{n}(\tau))\d\tau}_{\Lxi}\le C_1\norm{u_0}_{\D1}.
\end{align}
In addition, apply again Lemma \ref{Contraction estimate1 sect5}, for any $ w_n\in\mathbb{U} $, we obtain
\begin{align}\label{Sect5_Invariant_sp_2}
&t^{\frac{\alpha}{2}}\norm{\int_{\mathscr{T}_*}^{t}\mathscr{R}(t-\tau)H(w_{n}(\tau))\d\tau}_{\Lxi}\nonumber\\
\le&\frac{2\mathscr{C}_3(2C_1\norm{u_0}_{\D1})^3}{\left(\beta\left(\alpha,1-\frac{3\alpha}{2} \right)\right)^{-1}}\left(\left(\Gamma\left(3\right)\right)^{\frac{1}{2}}+\left(\Gamma\left(4\right)\right)^{\frac{1}{2}}\left(6(2C_1\norm{u_0}_{\D1})^2\right)^{\frac{1}{6}}\right).
\end{align} 
Therefore, for any $ w_n\in\mathbb{U} $, we deduce the following two claims.

\noindent\textbf{Claim 1}. Combining the conclusions that $ w_1\in\mathbb{U} $ and the inequality \eqref{Sect5_Invariant_sp_1} gives us 
\begin{align*}
\norm{w_{n+1}(t)}_{\Lxi}\le\norm{\mathscr{S}(t)u_0}_{\Lxi}+\norm{\int_{0}^{t}\mathscr{R}(t-\tau)H(w_{n}(\tau))\d\tau}_{\Lxi}\le2C_1\norm{u_0}_{\D1}.
\end{align*}

\noindent\textbf{Claim 2}. Similarly, since $ w_1\in\mathbb{U} $ and the estimate \eqref{Sect5_Invariant_sp_2} holds, we have
\begin{align*}
t^{\frac{\alpha}{2}}\norm{w_{n+1}(t)}_{\Lxi}\le t^{\frac{\alpha}{2}}\norm{\mathscr{S}(t)u_0}_{\Lxi}+t^{\frac{\alpha}{2}}\norm{\int_{0}^{t}\mathscr{R}(t-\tau)H(w_{n}(\tau))\d\tau}_{\Lxi}\le2C_1\norm{u_0}_{\D1},
\end{align*}
as long as $ \norm{u_0}_{\D1} $ is sufficiently small.

These claims show that for any $ t\in I $, $ \bnorm{w_{n+1}}_{\mathbb{U}(t)} $ is less than or equals to $ 2C_1\norm{u_0}_{\D1} $. It means that $ w_{n+1} $ belongs to $ \mathbb{U} $ whenever $ w_n $ is in $ \mathbb{U} $, provided that the continuity of $ w_{n+1} $ is inferred by Lemma \ref{Sect3_continuous}. The remaining work is to show that $ \{w_n\}_{n=1}^{\infty} $ is a Cauchy sequence with respect to the $ \mathbb{U} $ norm. Based on this result, we can easily obtain the unique mild solution $ u $ of Problem \eqref{Main Equation}-\eqref{Initial condition} which is the limit function of the sequence $ \{w_n\}_{n=1}^{\infty} $. To this end, we take two elements of the sequence, $ w_{n-1},w_n\in\mathbb{U} $. Analogous to the way we find the estimate \eqref{Sect5_Invariant_sp_1}, we set
\begin{align*}
\mathscr{T}_{**}:=\left(\frac{64\mathscr{C}_3\norm{u_0}^2_{\D1}\left(\left(\Gamma\left(3\right)\right)^{\frac{1}{2}}+\left(\Gamma\left(4\right)\right)^{\frac{1}{2}}\left(6(2\norm{u_0}_{\D1})^2\right)^{\frac{1}{6}}\right)}{\alpha C_1}\right)^{\frac{-1}{\alpha}}.
\end{align*}
On the one hand, for any $ t>\mathscr{T}_{**} $, we have
\begin{align*}
&\Bgnorm{\int_{0}^{t}\mathscr{R}(t-\tau)\big(H(w_{n}(\tau))-H(w_{n-1}(\tau))\big)\d\tau}_{\Lxi}&\nonumber\\
\le&\underbrace{\int_{0}^{\mathscr{T}_{**}}\Bnorm{\mathscr{R}(t-\tau)\big(H(w_{n}(\tau))-H(w_{n-1}(\tau))\big)}_{\Lxi}\d\tau}_{\mathrm{I}_1}&\\
&+\underbrace{\int_{\mathscr{T}_{**}}^{t}\Bnorm{\mathscr{R}(t-\tau)\big(H(w_{n}(\tau))-H(w_{n-1}(\tau))\big)}_{\Lxi}\d\tau}_{\mathrm{I}_2}.&
\end{align*}
In view of Lemma \ref{Exp_Contraction_estimates}, one finds that
\begin{flalign*}
\mathrm{I}_1&\le\frac{2\mathscr{C}_3\mathscr{T}_{**}^\alpha}{\alpha(2\norm{u_0}_{\D1})^{-2}}\left(\left(\Gamma\left(3\right)\right)^{\frac{1}{2}}+\left(\Gamma\left(4\right)\right)^{\frac{1}{2}}\left(6(2\norm{u_0}_{\D1})^2\right)^{\frac{1}{6}}\right)\norm{w_n(t)-w_{n-1}(t)}_{\mathbb{U}}&\nonumber\\
&\le\frac{1}{4}\norm{w_n(t)-w_{n-1}(t)}_{\mathbb{U}}.&
\end{flalign*}
Applying Lemma \ref{Exp_Contraction_estimates} in the same way as in \eqref{Sect5_T* to t}, we have
\begin{flalign*}
\mathrm{I}_2&\le\frac{2\mathscr{C}_3\mathscr{T}_{**}^{-\frac{\alpha}{2}}(2\norm{u_0}_{\D1})^2\left(\left(\Gamma\left(3\right)\right)^{\frac{1}{2}}+\left(\Gamma\left(4\right)\right)^{\frac{1}{2}}\left(6(2\norm{u_0}_{\D1})^2\right)^{\frac{1}{6}}\right)}{\left(\beta\left(\alpha,1-\frac{3\alpha}{2} \right)\right)^{-1}}\norm{w_n(t)-w_{n-1}(t)}_{\mathbb{U}}&\nonumber\\
&=\frac{64\norm{u_0}_{\D1}^3\left(\mathscr{C}_3\left(\left(\Gamma\left(3\right)\right)^{\frac{1}{2}}+\left(\Gamma\left(4\right)\right)^{\frac{1}{2}}\left(6(2\norm{u_0}_{\D1})^2\right)^{\frac{1}{6}}\right)\right)^{\frac{3}{2}}}{\left(\beta\left(\alpha,1-\frac{3\alpha}{2} \right)\right)^{-1}}\norm{w_n(t)-w_{n-1}(t)}_{\mathbb{U}}&\nonumber\\
&\le\frac{1}{4}\norm{w_n(t)-w_{n-1}(t)}_{\mathbb{U}},&
\end{flalign*}
provided that $ \norm{u_0}_{\D1} $ is sufficiently small. In addition, for any $ t<\mathscr{T}_{**} $, we note that 
\begin{align*}
\Bgnorm{\int_{0}^{t}\mathscr{R}(t-\tau)\big(H(w_{n}(\tau))-H(w_{n-1}(\tau))\big)\d\tau}_{\Lxi}\le\mathrm{I}_1\le\mathrm{I}_1+\mathrm{I}_2.
\end{align*}
Wherefore, for any $ t\in I $ the following estimate holds
\begin{align}\label{Sect5 Cauchy sp1}
\sup_{t\in I}\norm{w_{n+1}(t)-w_{n}(t)}_{\D1}&=\sup_{t\in I}\norm{\int_{0}^{t}\mathscr{R}(t-\tau)\big(H(w_{n}(\tau))-H(w_{n-1}(\tau))\big)\d\tau}_{\Lxi}\nonumber\\
&\le\mathrm{I}_1+\mathrm{I}_2\le\frac{1}{2}\norm{w_n(t)-w_{n-1}(t)}_{\mathbb{U}}.
\end{align}
On the other hand, by replacing respectively $ w,v $ by $ w_n,w_{n-1}\in\mathbb{U} $ in Lemma \ref{Exp_Contraction_estimates}, the following inequality holds
\begin{align*}
&t^{\frac{\alpha}{2}}\norm{\int_{0}^{t}\mathscr{R}(t-\tau)\big(H(w_n(\tau))-H(w_{n-1}(\tau))\big)\d\tau}_{\Lxi}\nonumber\\
\le&\frac{2\mathscr{C}_3(2\norm{u_0}_{\D1})^2}{\left(\beta\left(\alpha,1-\frac{3\alpha}{2} \right)\right)^{-1}}\left(\left(\Gamma\left(3\right)\right)^{\frac{1}{2}}+\left(\Gamma\left(4\right)\right)^{\frac{1}{2}}\left(6(2\norm{u_0}_{\D1})^2\right)^{\frac{1}{6}}\right)\norm{w_{n}(t)-w_{n-1}(t)}_{\mathbb{U}}.
\end{align*} 
As a result, if $ \norm{u_0}_{\D1} $ is small enough, we deduce
\begin{align}\label{Sect5 Cauchy sp2}
\sup_{t\in I\setminus \{0\}}t^{\frac{\alpha}{2}}\norm{w_{n+1}(t)-w_{n}(t)}_{\Lxi}\le\frac{1}{2}\norm{w-v}_{\mathbb{U}}.
\end{align}
On account of \eqref{Sect5_Invariant_sp_2}, \eqref{Sect5 Cauchy sp1} and \eqref{Sect5 Cauchy sp2}, we can conclude that
\begin{align*}
\norm{w_{n+1}(t)-w_{n}(t)}_{\mathbb{U}}\le\frac{1}{2}\norm{w_{n}(t)-w_{n-1}(t)}_{\Lxi},
\end{align*}
for every $ w_n,w_{n-1}\in\mathbb{U} $. It means that  $ \{w_n\}_{n=1}^\infty $ is a Cauchy sequence in $ \mathbb{U} $. The proof is completed.
\end{proof}
\begin{remark}
We note that in the above proof, we consider only the case $ \mathscr{T}>\max\left(\mathscr{T}_*,\mathscr{T}_{**}\right) $. When, $ \mathscr{T}<\min\left(\mathscr{T}_*,\mathscr{T}_{**}\right) $, the proof is similar and easier, so we omit it here.
\end{remark}

\section{Appendix}
\begin{definition}
Let $\alpha\in(0,1)$. Then, the definition of the M-Wright type function $\mathcal{W}_{\alpha}$ is given by
\begin{align*}
	\mathcal{W}_{\alpha}(r):=\sum_{k=0}^{\infty}\frac{r^{k}}{k!\Gamma(-\alpha k+1-\alpha)}.
\end{align*}
\end{definition}

\begin{lemma}\label{Property of M-Wright function}(\cite[Section 2]{fractional NavierStokes})
For $\alpha\in(0,1)$ and $\mu\in(-1, \infty)$, there holds
\begin{align*}
\int_{0}^{\infty}r^{\mu}\mathcal{W}_{\alpha}(r)\d r=\frac{\Gamma(1+\mu)}{\Gamma(1+\alpha\mu)}.
\end{align*}
\end{lemma}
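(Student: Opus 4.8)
The plan is to read the integral as the Mellin transform of $\mathcal{W}_\alpha$ evaluated at the point $\mu+1$, to establish the identity first on the strip $\mu\in(-1,0)$ where the relevant transforms converge absolutely, and then to propagate it to the whole range $\mu\in(-1,\infty)$ by analytic continuation. To set this up I would first record the classical qualitative properties of $\mathcal{W}_\alpha$ for $0<\alpha<1$: it is nonnegative on $(0,\infty)$, it decays faster than any exponential as $r\to\infty$, and near the origin it is bounded by $\mathcal{W}_\alpha(0)=1/\Gamma(1-\alpha)$ (see \cite{Mainardi}). Consequently $F(\mu):=\int_{0}^{\infty}r^{\mu}\mathcal{W}_\alpha(r)\,\mathrm{d}r$ converges for every $\mu$ with $\Re\mu>-1$ and, by differentiation under the integral sign, is holomorphic on the half-plane $\{\Re\mu>-1\}$. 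The candidate right-hand side $G(\mu):=\Gamma(1+\mu)/\Gamma(1+\alpha\mu)$ is holomorphic there as well, since $\Gamma(1+\mu)$ has its first pole at $\mu=-1$ while $1/\Gamma(1+\alpha\mu)$ is entire. It therefore suffices to verify $F=G$ on the interval $(-1,0)$.

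For such a $\mu$ I would set $s=-\mu\in(0,1)$ and start from the Laplace representation $E_{\alpha}(-z)=\int_{0}^{\infty}e^{-zr}\mathcal{W}_\alpha(r)\,\mathrm{d}r$ recorded earlier in the preliminaries. Multiplying by $z^{s-1}$ and integrating in $z$ over $(0,\infty)$, nonnegativity of the integrand lets Tonelli's theorem justify interchanging the two integrations; the inner one is the Euler integral $\int_{0}^{\infty}z^{s-1}e^{-zr}\,\mathrm{d}z=\Gamma(s)\,r^{-s}$, so that
\[
\int_{0}^{\infty}z^{s-1}E_{\alpha}(-z)\,\mathrm{d}z=\Gamma(s)\int_{0}^{\infty}r^{-s}\mathcal{W}_\alpha(r)\,\mathrm{d}r=\Gamma(s)\,F(-s).
\]
The left-hand side is the classical Mellin transform of the Mittag-Leffler function, equal to $\Gamma(s)\Gamma(1-s)/\Gamma(1-\alpha s)$ for $0<s<1$; its convergence is guaranteed by the bound $E_{\alpha}(-z)\le\mathcal{M}/(1+z)$ of Lemma \ref{Bound of Mittag-Leffler}, and its value follows from the reflection formula for $\Gamma$ (or may simply be cited from \cite{Mainardi,Podlubny}). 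Cancelling the common factor $\Gamma(s)$ yields $F(-s)=\Gamma(1-s)/\Gamma(1-\alpha s)$, that is, $F(\mu)=\Gamma(1+\mu)/\Gamma(1+\alpha\mu)=G(\mu)$ for all $\mu\in(-1,0)$.

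To finish, since $F$ and $G$ are holomorphic on the connected domain $\{\Re\mu>-1\}$ and agree on the segment $(-1,0)$, which has accumulation points inside the domain, the identity theorem forces $F\equiv G$ throughout; in particular the asserted equality holds for every real $\mu\in(-1,\infty)$. The hard part is precisely the Mellin transform of $E_{\alpha}(-z)$: the integral $\int_{0}^{\infty}z^{s-1}E_{\alpha}(-z)\,\mathrm{d}z$ converges only on the vertical strip $0<\Re s<1$ (the upper bound being dictated by the decay $E_{\alpha}(-z)\sim z^{-1}$), which is exactly why the direct evaluation is confined to $\mu\in(-1,0)$ and the passage to nonnegative $\mu$ must be made by continuation rather than by a single integral computation. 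As a fully self-contained cross-check of the extension, I would note that differentiating the Laplace relation $n$ times at $z=0$ and using the power series of $E_{\alpha}$ gives $\int_{0}^{\infty}r^{n}\mathcal{W}_\alpha(r)\,\mathrm{d}r=n!/\Gamma(\alpha n+1)$, which is precisely $G(n)$ for the integer points $\mu=n\ge0$.
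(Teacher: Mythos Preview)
The paper does not supply a proof of this lemma: it is quoted verbatim from \cite[Section 2]{fractional NavierStokes} and stated in the Appendix without argument, so there is no in-paper proof to compare against.

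Your argument is correct and self-contained. The route via the Mellin transform of $E_{\alpha}(-z)$ combined with analytic continuation in $\mu$ is standard and rigorous: the qualitative bounds on $\mathcal{W}_{\alpha}$ (boundedness at the origin, super-exponential decay at infinity, nonnegativity) justify both the convergence of $F(\mu)$ on $\Re\mu>-1$ and the use of Tonelli, and the Mellin identity $\int_{0}^{\infty}z^{s-1}E_{\alpha}(-z)\,\mathrm{d}z=\Gamma(s)\Gamma(1-s)/\Gamma(1-\alpha s)$ on the strip $0<\Re s<1$ is classical (it is the Mellin--Barnes representation of $E_{\alpha}$; see \cite{Mainardi,Podlubny}). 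The analytic-continuation step is clean, and your cross-check at the nonnegative integers via termwise differentiation of the Laplace relation is a nice redundancy. If anything, one could shorten the argument by noting that the moment identity at the integers $\mu=n\ge0$ already follows directly from expanding $e^{-zr}$ in the Laplace relation and matching coefficients with the series for $E_{\alpha}(-z)$; together with the interval $(-1,0)$ this gives a set with accumulation points and the identity theorem applies just as well, but your version is already complete.
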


\begin{lemma}\label{Fractional Sobolev embeddings}
Let $ \Omega\subset\R^d $ be a bounded domain with smooth boundary and $ p\in[1,\infty) $. Then, 
\begin{enumerate}[(i)]
\item if $~ 0\le \nu<\frac{d}{2} $ and $ 1\le p\le\frac{2d}{d-2\nu} $, or $ \nu=\frac{d}{2} $ and $ 1\le p<\infty $, we have
\begin{align*}
\Dnu\hookrightarrow L^p(\Omega);
\end{align*}
\item if $\frac{-d}{2}<\nu\le0 $ and $ p\ge\frac{2d}{d-2\nu} $, we have
\begin{align*}
\Dnu\hookleftarrow L^p(\Omega).
\end{align*}
\end{enumerate}
\end{lemma}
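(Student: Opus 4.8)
The plan is to reduce the spectrally-defined Hilbert scale $\Dnu$ to a classical fractional Sobolev space and then invoke the usual Sobolev embedding theorem. By the very definition, $\norm{w}_{\Dnu}^2=\sumuse\theta_k^\nu\inner{w}{\phi_k}^2=\bnorm{\mathcal{A}^{\nu/2}w}_{\L2}^2$, so $\Dnu$ is precisely the domain of the fractional power $\mathcal{A}^{\nu/2}=(-\Delta)^{\nu/2}$ of the Dirichlet Laplacian, endowed with the graph norm. First I would record that, since $\Omega$ is bounded with smooth boundary, the Dirichlet heat semigroup $\{e^{-t\mathcal{A}}\}_{t\ge0}$ is ultracontractive, namely $\bnorm{e^{-t\mathcal{A}}}_{\mathscr{L}(L^1(\Omega),L^\infty(\Omega))}\le Ct^{-d/2}$ for $0<t\le1$. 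Combining this bound with the subordination formula $\mathcal{A}^{-\nu/2}=\frac{1}{\Gamma(\nu/2)}\int_0^\infty t^{\nu/2-1}e^{-t\mathcal{A}}\,\d t$ and a Riesz--Thorin interpolation yields the Sobolev-type estimate $\bnorm{\mathcal{A}^{-\nu/2}f}_{L^{p^*}(\Omega)}\le C\norm{f}_{\L2}$ for $\frac{1}{p^*}=\frac12-\frac{\nu}{d}$ whenever $0\le\nu<\frac d2$; equivalently $\Dnu\hookrightarrow L^{p^*}(\Omega)$ with $p^*=\frac{2d}{d-2\nu}$. This is the crux of part (i), and the argument is insensitive to boundary conditions in this direction.

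Next, for $1\le p\le p^*$ the embedding $\Dnu\hookrightarrow L^p(\Omega)$ follows immediately from $|\Omega|<\infty$ and H\"older's inequality. For the borderline case $\nu=\frac d2$ I would use $\theta_k\ge\theta_1>0$ for all $k$, which gives $\Dnu\hookrightarrow D^{\nu'}(\Omega)$ for every $\nu'\in(0,\tfrac d2)$; letting $\nu'\uparrow\frac d2$ makes $\frac{2d}{d-2\nu'}$ arbitrarily large, whence $\Dnu\hookrightarrow L^p(\Omega)$ for all $p\in[1,\infty)$. Alternatively, for low $\nu$ one may simply invoke the identification of $\Dnu$ with $W_0^{\nu,2}(\Omega)$ recalled in the excerpt (and more generally with the Lions--Magenes space $H_0^\nu(\Omega)$) and quote the classical Sobolev embedding for those spaces.

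For part (ii), I would argue by duality. Fix $\nu\in(-\tfrac d2,0]$ and put $\mu=-\nu\in[0,\tfrac d2)$. Since $D^{-\mu}(\Omega)$ is by definition the dual of $D^{\mu}(\Omega)$ relative to the pairing $\inner{\cdot}{\cdot}_*$, and $D^{\mu}(\Omega)$ is densely contained in $L^{p'}(\Omega)$ (the eigenfunctions lie in $D^{\mu}(\Omega)$), the continuous embedding $L^p(\Omega)\hookrightarrow D^{-\mu}(\Omega)$ is equivalent to the continuous embedding $D^{\mu}(\Omega)\hookrightarrow L^{p'}(\Omega)$, where $\frac1p+\frac1{p'}=1$. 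By part (i) the latter holds as soon as $p'\le\frac{2d}{d-2\mu}$, and the elementary manipulation $\frac{p}{p-1}\le\frac{2d}{d-2\mu}\iff 2d\le p(d+2\mu)\iff p\ge\frac{2d}{d-2\nu}$ shows that this is exactly the stated hypothesis; the embedding constant is inherited from part (i).

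The main obstacle I anticipate is the clean justification of the Sobolev bound $\bnorm{\mathcal{A}^{-\nu/2}f}_{L^{p^*}}\le C\norm{f}_{\L2}$ for the Dirichlet fractional Laplacian on a general bounded smooth domain: one must either set up the ultracontractivity / Gagliardo--Nirenberg machinery for $e^{-t\mathcal{A}}$ carefully (in particular the endpoint exponent $p^*$), or appeal to the identification of $\Dnu$ with a standard fractional Sobolev space. Everything else --- the finite-measure interpolation for smaller $p$, the limiting argument at $\nu=\frac d2$, and the duality for part (ii) --- is routine.
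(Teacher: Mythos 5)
The paper itself states this lemma in the appendix without any proof, treating it as a standard fact about the Hilbert scale generated by the Dirichlet Laplacian, so there is no in-paper argument to compare against. Your reduction is the standard one and is sound in outline: the identification $\norm{w}_{\Dnu}=\bnorm{\mathcal{A}^{\nu/2}w}_{\L2}$, ultracontractivity plus subordination for the sub-critical exponents, H\"older on the bounded domain for $p\le p^*$, the monotonicity $\theta_k\ge\theta_1$ to handle $\nu=\frac d2$ by exhausting with $\nu'<\frac d2$, and the duality reduction of (ii) to (i) (the exponent arithmetic $p'\le\frac{2d}{d-2\mu}\iff p\ge\frac{2d}{d-2\nu}$ checks out, and $p>1$ is automatic from $\mu<\frac d2$, so $p'<\infty$ causes no trouble). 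An alternative the paper implicitly leans on is the identification of $\Dnu$ with $W_0^{\nu,2}(\Omega)$ (its Remark 2.4, via Bonforte--Sire--V\'azquez), after which one simply quotes the classical fractional Sobolev embedding; that route avoids semigroup estimates entirely.

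The one step in your sketch that does not close as written is the endpoint $p^*=\frac{2d}{d-2\nu}$: with $\frac{1}{p^*}=\frac12-\frac{\nu}{d}$ the integrand in $\int_0^\infty t^{\nu/2-1}\bnorm{e^{-t\mathcal{A}}f}_{L^{p^*}(\Omega)}\,\d t$ behaves like $t^{-1}\norm{f}_{\L2}$, so the naive bound diverges logarithmically at \emph{both} $t\to0$ and $t\to\infty$. The large-$t$ half is rescued by the spectral gap $\bnorm{e^{-t\mathcal{A}}}_{\mathscr{L}(\L2)}\le e^{-\theta_1 t}$, but the small-$t$ half genuinely requires more than interpolation of operator norms: one needs either a weak-type $(2,p^*)$ estimate followed by Marcinkiewicz interpolation (the usual Hardy--Littlewood--Sobolev argument, splitting the subordination integral at a level depending on $f$), or else the retreat to the classical fractional Sobolev embedding for $H^\nu_0(\Omega)$. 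You flag exactly this obstacle yourself, which is the right instinct, but as submitted the endpoint case of (i) --- and hence the endpoint case $p=\frac{2d}{d-2\nu}$ of (ii), which is the case actually used in the body of the paper (e.g.\ $L^{\frac{2d}{d+2}}(\Omega)\hookrightarrow D^{-1}(\Omega)$) --- is asserted rather than proved.
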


\begin{lemma}\label{Fractional Gronwall inequality}
(Fractional Gr\"onwall inequality) Let $ m,n$ be positive constants and $ \alpha\in(0,1). $ Suppose that function $ w\in L^{\infty}(0,T] $ satisfies the following inequality
\begin{align*}
w(t)\le m+n\int_{0}^{t}(t-\tau)^{\alpha-1}w(\tau)\d\tau,\quad\text{for all~}t\in(0,T],
\end{align*} 
then, the result below is satisfied
\begin{align*}
w(t)\le mE_{\alpha}\big(n\Gamma(\alpha)t^{\alpha}\big).
\end{align*}
\end{lemma}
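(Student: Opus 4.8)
The plan is to close the integral inequality by iterating the Volterra operator that it contains, the classical device behind Henry-type Gr\"onwall estimates. Introduce, for $f\in L^{\infty}(0,T]$, the linear operator $(\mathcal{B}f)(t):=n\int_{0}^{t}(t-\tau)^{\alpha-1}f(\tau)\,\d\tau$; it is well defined since $(t-\tau)^{\alpha-1}$ is integrable on $(0,t)$ and $f$ is bounded, and it is order-preserving because its kernel is nonnegative, so that $|\mathcal{B}f|\le\mathcal{B}|f|$ and $f\le g$ implies $\mathcal{B}f\le\mathcal{B}g$. With this notation the hypothesis reads $w\le m+\mathcal{B}w$ on $(0,T]$.

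First I would iterate: applying $\mathcal{B}$ to $w\le m+\mathcal{B}w$ and using linearity and monotonicity repeatedly, one obtains for every $k\ge1$ the inequality $w\le m\sum_{j=0}^{k-1}\mathcal{B}^{j}\mathbf{1}+\mathcal{B}^{k}w$, where $\mathbf{1}$ denotes the constant function $1$ and $\mathcal{B}^{0}\mathbf{1}=\mathbf{1}$. Next I would evaluate the iterates of $\mathbf{1}$ by induction: starting from $(\mathcal{B}\mathbf{1})(t)=n\alpha^{-1}t^{\alpha}$ and using the Beta-function identity $\int_{0}^{t}(t-\tau)^{\alpha-1}\tau^{j\alpha}\,\d\tau=\frac{\Gamma(\alpha)\Gamma(j\alpha+1)}{\Gamma((j+1)\alpha+1)}\,t^{(j+1)\alpha}$ (see Definition \ref{Gamma-Beta defs}), one checks that $(\mathcal{B}^{j}\mathbf{1})(t)=\frac{(n\Gamma(\alpha))^{j}}{\Gamma(j\alpha+1)}\,t^{j\alpha}$. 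Hence $m\sum_{j=0}^{k-1}(\mathcal{B}^{j}\mathbf{1})(t)$ is exactly the $k$-th partial sum of the series $m\sum_{j=0}^{\infty}\frac{(n\Gamma(\alpha)t^{\alpha})^{j}}{\Gamma(j\alpha+1)}=mE_{\alpha}\big(n\Gamma(\alpha)t^{\alpha}\big)$.

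Finally I would show that the remainder $\mathcal{B}^{k}w$ is negligible. The order-preserving property gives $\big|(\mathcal{B}^{k}w)(t)\big|\le\norm{w}_{L^{\infty}(0,T]}(\mathcal{B}^{k}\mathbf{1})(t)=\norm{w}_{L^{\infty}(0,T]}\frac{(n\Gamma(\alpha))^{k}}{\Gamma(k\alpha+1)}t^{k\alpha}$, which is the general term of the convergent series defining $E_{\alpha}$, hence it tends to $0$ as $k\to\infty$, uniformly on $[0,T]$. Passing to the limit $k\to\infty$ in $w\le m\sum_{j=0}^{k-1}\mathcal{B}^{j}\mathbf{1}+\mathcal{B}^{k}w$ yields $w(t)\le mE_{\alpha}(n\Gamma(\alpha)t^{\alpha})$, which is the assertion.

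The argument is essentially routine; the only step I expect to require genuine care is the treatment of the remainder, since it is precisely the hypothesis $w\in L^{\infty}(0,T]$ that supplies the a priori bound making $\mathcal{B}^{k}w\to0$, and without it the iteration would not close. Two bookkeeping items should also be recorded explicitly: the induction for $\mathcal{B}^{j}\mathbf{1}$ via the Beta-function recursion, and the order-preserving property of $\mathcal{B}$ — which follows merely from nonnegativity of the kernel $(t-\tau)^{\alpha-1}$ and is what legitimizes passing from $w\le m+\mathcal{B}w$ to the iterated inequality.
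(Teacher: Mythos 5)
Your proof is correct: the iteration $w\le m\sum_{j=0}^{k-1}\mathcal{B}^{j}\mathbf{1}+\mathcal{B}^{k}w$, the Beta-function induction giving $(\mathcal{B}^{j}\mathbf{1})(t)=\frac{(n\Gamma(\alpha))^{j}}{\Gamma(j\alpha+1)}t^{j\alpha}$, and the use of the $L^{\infty}$ bound to show $\mathcal{B}^{k}w\to0$ are all sound and together yield exactly the stated Mittag-Leffler bound. For the record, the paper states this lemma in its Appendix without any proof, treating it as a known auxiliary result (the classical Henry-type fractional Gr\"onwall inequality); your argument is the standard proof of that result, so there is nothing in the paper to contrast it with.
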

\begin{definition}\label{Gamma-Beta defs}
Let $ p,q>0 $. Then, the Beta function and the Gamma function can be defined respectively by
\begin{align*}
\beta(p,q)=\int_{0}^{1}(1-z)^{p-1} z^{q-1}\d z\quad\text{ and }\quad\Gamma(p)=\int_{0}^{\infty}z^{p-1}e^{-z}\d z.
\end{align*}
\end{definition}
\section*{Acknowledgement}
The first author (Huy Tuan Nguyen) was supported by Vietnam National Foundation for Science and Technology Development (NAFOSTED) under grant number 101.02-2019.09.

\pagebreak
\end{document}